\documentclass[11pt]{article}
\usepackage{amsmath,amssymb,amsthm,enumerate,graphicx,bbm,appendix,booktabs,enumitem,verbatim,subfigure}
\usepackage{multirow}
\usepackage{array}
\usepackage{float}
\setlength{\textheight}{9in} \setlength{\topmargin}{0in}\setlength{\headheight}{0in}\setlength{\headsep}{0in}
\setlength{\textwidth}{6.5in} \setlength{\oddsidemargin}{0in}\setlength{\marginparsep}{0in}
\setlength{\parindent}{0.5cm}
\usepackage[dvipsnames,usenames]{color}

\def\fskip#1{}

\newtheorem{theorem}{Theorem }

\newtheorem{algorithm}{Algorithm}

\newtheorem{lemma}[theorem]{Lemma}
\newtheorem{definition}{Definition}[section]

\newtheorem{proposition}[theorem]{Proposition}

\newtheorem{assumption}{Assumption}



\newcommand{\Real}{\ensuremath{\mathbb{R}}}

\def\be{\begin{enumerate}}
\def\ee{\end{enumerate}}

\def\st{\mbox{subject to}}
\newcommand{\pmat}[1]{\begin{pmatrix} #1 \end{pmatrix}}

		\def\bkE{{\rm I\kern-.17em E}}
		\def\bk1{{\rm 1\kern-.17em l}}
		\def\bkD{{\rm I\kern-.17em D}}
		\def\bkR{{\rm I\kern-.17em R}}
		\def\bkP{{\rm I\kern-.17em P}}
		\def\bkY{{\bf \kern-.17em Y}}
		\def\bkZ{{\bf \kern-.17em Z}}

\def\xbar{\bar{x}}

\def\R{\mathbb{R}}

\def\argmin{\mathop{\rm argmin}}

\title{On the resolution of misspecified convex optimization
	and monotone variational inequality problems}
\author{Hesam~Ahmadi and Uday~V.~Shanbhag\thanks{Ahmadi and Shanbhag are
with the Department of Industrial and Manufacturing
Engineering, respectively at the Pennsylvania State University,
	University Park, PA-16803. They are reachable at ({\tt
		ahmadi.hesam@gmail.com,udaybag@psu.edu}) and their research has been
partially funded by NSF awards no. 1246887 (CAREER) and 1400217.}}
\begin{document}
\maketitle
\thispagestyle{empty}
\pagestyle{empty}
\begin{abstract}
We consider a  misspecified optimization problem that requires
minimizing a function $f(x;\theta^*)$ over {a closed and convex set} $X$
where $\theta^*$ is an unknown vector of parameters that {may be} learnt
by a parallel learning process.  In this context, we examine the
development of coupled schemes that generate iterates $(x_k,\theta_k)$
as  $k \to \infty$, then $x_k \to x^*$, a minimizer of $f(x;\theta^*)$
over $X$ and $\theta_k \to \theta^*$. In the first part of the paper, we
consider the solution of problems where $f$ is either smooth or
nonsmooth. In smooth strongly convex regimes, we demonstrate that such
schemes lead to a quantifiable degradation of the standard linear
convergence rate. When strong convexity assumptions are weakened, it can
be shown that the convergence in function values sees a modification in
the convergence rate of ${\cal O}(1/K)$ by an additive factor
$\|\theta_0-\theta^*\|{\cal
O}(q_g^K + 1/K)$ where $\|\theta_0-\theta^*\|$ represents the initial
misspecification in $\theta^*$ and $q_g$ denotes the contractive factor associated
with the learning process. In both convex and strongly convex regimes,
diminishing steplength schemes are also provided and are less reliant on
the knowledge of problem parameters. Finally, we present an
averaging-based subgradient scheme and show that the optimal constant
steplength leads to a modification in the rate by
$\|\theta_0-\theta^*\|{\cal O}(q_g^K +
1/K)$, implying no effect on the standard rate of ${\cal
	O}(1/\sqrt{K})$. In the second part of the paper, we consider the
	solution of misspecified monotone variational inequality problems,
	motivated by the need to contend with more general equilibrium
	problems as well as the possibility of misspecification in the
	constraints. In this context, we first present a constant steplength
	misspecified extragradient scheme and prove its asymptotic
	convergence.  This scheme is reliant on problem parameters (such as
			Lipschitz constants) and leads us to present a misspecified
	variant of iterative Tikhonov regularization. Numerics support the
	asymptotic and rate statements  with one important observation: it
	appears that the rate bound derived for strongly convex problems
	appears to be slack in that the standard linear rate is again
	observed, despite the theoretical prediction that learning leads to
	degradation.
	\end{abstract}

\section{Introduction}
Traditionally, the field of deterministic optimization has focused on
the problem of minimizing a function $f(x)$ over a prescribed set $X$
and it is {generally} assumed that the decision maker has complete
knowledge of both the function $f$ and the set $X$
(cf.~\cite{gill81practical,bertsekas99nonlinear}). {In many settings,
problem data may be uncertain, severely limiting the applicability of
deterministic methods. Initiated through the research by
Dantzig~\cite{dantzig55linear} and Beale~\cite{beale55minimizing},
stochastic programming has represented a popular avenue for addressing
risk-neutral as well as risk-averse static and adaptive (recourse-based)
decision-making problems~\cite{birge97introduction,Shapiro09lecturesSA}
in developing both static as well as adaptive (recourse-based) models.
An alternative approach has found merit by obviating the need for
distributional information and instead focuses on obtaining solutions
that are {\em robust} to parametric uncertainty over a prescribed
(uncertainty) set~\cite{BEN:09,Bertsimas:2011:TAR:2079012.2079015}. In
either instance, a subset of parameters is natively uncertain. Our focus
is on a class of problems in which the vector parameters is $\theta^*$,
   a fixed {\em but unknown} vector, that may be learnt through a
	   related but distinct learning process.  We provide a clearer
	   understanding of our problem of interest by considering a
	   motivating problem.\\

\noindent {\bf Data-driven stochastic optimization:} Standard models for
stochastic optimization have required the solution of the following
problem~\cite{birge97introduction,Shapiro09lecturesSA}:
\begin{align}
\tag{StochOpt$(\theta^*)$} \min_{x \in X} \, \mathbb{E}_{\theta^*}[
f(x;\xi(\omega))],
\end{align}
where $X \subseteq \Real^n$, $f:X \times \Real^d \to \Real$, $\xi:\Omega
\to \Real^d$ is a $d-$dimensional random variable, and $(\Omega,
		{\mathcal F}, \mathbb{P}_{\theta^*})$ denotes the probability
space. Note that $\theta^*$ represents the parameters of the
distribution $\mathbb{P}$. Unfortunately, a key shortcoming in the use
of standard models necessitates knowledge of $\theta^*$, often a
stringent requirement. Instead, suppose $\theta^*$ may be learnt by a
suitably defined  maximum likelihood estimation (MLE)
problem~\cite{hastie01elements}, captured by a
metric $g(\theta)$, and formally defined as follows:
\begin{align}
\tag{MLE} \min_{\theta \in \Theta} \, g(\theta).
\end{align}
Generally, in most practically occurring problems, the MLE problem is
often massive and one avenue lies in generating sequences
$\{(x_k,\theta_k)\}$ such that $x_k$ is an approximate solution of
(StochOpt$(\theta_k)$).\\

A range of other problems can be cast in a similar regime. For instance,
in traffic equilibrium problems~\cite{facchinei02finite}, a common
assumption is that the demand pattern and the travel times are known
vectors, assumptions that are often hard to justify in practice.
Similarly, in a range of production planning problems, it is routinely
assumed that cost and demand information is accurately available when in
fact, it needs to be empirically estimated. Consequently, one approach
lies in conducting such an estimation through a parallel learning
process.  Yet another problem that can be cast under this umbrella is
the well studied multi-armed bandit problem~\cite{gittins89}. In such a
problem, a gambler is faced with a choosing from a collection of slot
machines at every step without a prior knowledge of the average reward
distribution.  If one could view the learning problem associated with
the reward distributions as a parallel estimation problem, this may be
one avenue towards developing algorithmic techniques. Motivated by this new set of decision-making problems, we consider the {\em static misspecified convex optimization
problem} $(\cal C(\theta^*))$, defined as follows:
\begin{align}
\tag{$\cal C(\theta^*)$} \min_{x \in X} \, f(x,\theta^*), \end{align} where
$x \in \Real^n$, $f: X \times \Theta \to \Real$ is a convex function in
$x$ for every $\theta \in \Theta \subseteq \Real^m$. Suppose $\theta^*$
denotes the solution to a convex learning problem denoted by $(\cal L)$:
\begin{align}
\tag{${\cal L}$} \min_{\theta \in \Theta} \ g(\theta), \end{align}
where $g: \Real^m \to \Real$ is a convex function in $\theta$ and is
defined on a closed and convex set $\Theta.$ Consequently, we consider
gradient methods in which sequences
$\{x_k\}$ and $\{\theta_k\}$ may be generated with the goal that
$$ \lim_{k\to \infty} x_k = x^* \in X^* \mbox{ and } \lim_{k\to \infty} \theta_k
= \theta^*, \mbox{ where } X^* \triangleq \displaystyle{ \argmin_{x \in X}
	f(x;\theta^*)}.$$
It should be noted that the second author has examined the counterpart
of such problems in stochastic regimes where stochastic approximation
schemes are employed~\cite{jiang13solution}. 

\subsection{Alternate and related avenues} Given that the focus lies on solving
$({\cal C}(\theta^*))$ and $(\cal L)$ simultaneously, at least three
approaches assume relevance and are described next.
\paragraph{A sequential approach:}
A natural question is whether this problem could indeed be solved in a
sequential fashion. For instance, one approach could be to compute
$\theta^*$ in the first stage and subsequently solve ($\cal
		C(\theta^*)$). Yet such an avenue is complicated by several
challenges: (i) First, the problem ($\cal L$) is often of a large or
massive scale and accurate/exact solutions of this problem are needed in
finite time to utilize this approach. However, the claim that finite
termination schemes are available is a strong one. In fact, even in the
rare instance when this requirement is met, the number of steps might be
far too large in practice.  Consequently, such an approach leads to
obtaining an approximation of $\theta^*$, given by a vector $\hat
\theta$ and {\bf cannot} provide asymptotically accurate solutions. (ii)
{Second, if the process is terminated prior to the commencing with the
computation of $x^*$, then the resulting computational effort would be
wasted in that we have no guarantees regarding the solution.} We consider precisely such an
	approach in  the context of economic dispatch problems, discussed in
	Section~\ref{sec:Numerics}. Table ~\ref{table:seq problem} shows the importance of terminating the
{learning problem} after a sufficiently large number of iterations via a
sequential approach. In particular, for smaller problems with $5$
generators, $15,000$ learning steps suffice in getting reasonably
accurate estimates of $x^*$ while the same number of iterations prove
insufficient for getting accurate solutions for networks with $20$
generators.  {In contrast}, our focus lies in developing techniques that
can provide {\bf asymptotically} accurate solutions equipped with
{global} {\bf non-asymptotic} error bounds.
\begin{table}[htbp]
\scriptsize
\begin{center}
\begin{tabular}{|c|c|c|c|c|c|}
  \hline
  \multirow{2}{*}{  Learning steps}&  \multirow{2}{*}{  Computational
	  steps} &  \multicolumn{2}{c|} {number of generators $=$ 5}&
	  \multicolumn{2}{c|}{ number of generator$=$ 40}\\\cline{3-6}
   &&$\|\theta_k-\theta^*\|$ &$\frac{\|f(x_k,\theta^*)-f^*\|}{1+f^*}$&$\|\theta_k-\theta^*\|$ &$\frac{\|f(x_k,\theta^*)-f^*\|}{1+f^*}$\\
  \hline  \hline
  1000&15000 &$6.43$e$0$& $6.49$e-$2$&$2.10$e$1$& $2.33$e-$1$\\\hline
	5000& 15000 &$3.34$e$0$ &$4.25$e-$2$  &$1.95$e$1$ &$9.05$e-$2$ \\\hline
	 10000&15000 &$1.48$e-$1$& $8.80$e-$3 $ &$1.77$e$1$& $8.08$e-$2 $ \\\hline
	15000 &15000&$1.63$e-$2$&$4.00$e-$4$&$1.06$e$1$&$6.60$e-$2$ \\\hline
\end{tabular}
\end{center}
\caption{Sequential approach: Effect of problem size on {accuracy}}
\label{table:seq problem}
\vspace{-0.2in}
\end{table}
\paragraph{A variational approach:} Given that a sequential approach
may not always be satisfactory, a partial resolution lies in considering a
variational approach where the overall problem is cast as a static
variational inequality problem~\cite{facchinei02finite}. If $X \subseteq \Real^n$ and $F: \Real^n
\to \Real^n$, then it may be recalled that VI$(X,F)$ requires an $x \in
X$ such that $ (y-x)^T F(x) \, \geq \, 0$ for all $\ y \ \in \ X. $
Under convexity assumptions, it can be shown with relative ease that $(x^*,\theta^*) \triangleq z^*$
is a solution of VI$(Z,H)$ if $Z \triangleq X \times \Theta$ and $H(z) =
(\nabla_{x} f(x,\theta);\nabla_{\theta} g(\theta)).$ But the solution of VI$(Z,H)$ via projection-based
techniques~\cite{Pang03I} remains a challenge since $H(z)$ is
generally not a monotone
map over the set $Z$ even if $f$ and $g$ are convex $C^1$ functions in $x$ and $\theta$,
	respectively;  it may be recalled that a map $H$ is monotone over $Z$ if
for every $z_1,z_2 \in Z$, $(H(z_1)-H(z_2))^T(z_1-z_2) \geq 0$. But
there are no available first-order schemes for computing solutions to
non-monotone variational inequality problems, severely limiting the
utility of such an approach. Yet, despite the inherently challenging
nature of the joint variational problem, our goal remains in deriving
non-asymptotic rates of convergence for gradient methods for such
problems by leveraging the structure of the problem and ascertaining the
impact that learning has on the rates.
\paragraph{A robust optimization approach:} Robust optimization, a
subfield of optimization, considers obtaining solutions that are robust
to parametric
uncertainty~\cite{Bertsimas:2011:TAR:2079012.2079015,Calafiore05uncertainconvex,BEN:09}.
In such problems, rather than a vector $\theta^*$, a part of the problem
input is the uncertainty set, say ${\cal U}_{\theta}$. In such a case, the relevant
robust optimizaton problem attempts to obtain an $x$ that minimizes the
worst-case value that $f(x,\theta)$ takes over ${\cal U}_\theta$:
\begin{align*}
\min_{x\in X}  \, \max_{\theta \in {\cal U}_\theta}  f(x,\theta).
\end{align*}
In contrast, our framework is fundamentally different in that the vector
$\theta^*$ is a {\bf deterministic} and unknown vector that can be
learnt.  To provide a clearer comparison, the learning scheme
	solves
the following problem
\begin{align*}
\min_{x\in X} \,f(x;\theta^*)\quad \mbox{where} \quad \theta^*=\argmin_{\theta \in \Theta} g(\theta).
\end{align*}

\paragraph{Learning while doing schemes:} 
Finally, we note the surge of interest in algorithms which
incorporate learning directly into the optimization phase. Early
instances of such problems were seen in the form of the multi-armed
bandit problem~\cite{katehakis87multiarmed,gittins89} in which a
decision-maker simultanelusly acquires new knowledge and leverages
existing knowledge in optimizing decisions.   In contrast
with the current context, the learning problem is no longer
static and available a priori; instead, it evolves in time as a
consequence of aggregating observations. In response to such challenges,
		   Agarwal et al. have developed techniques in the context on {\em online linear
programming}~\cite{agarwal14} as well as stylized counterparts in
the context of revenue management~\cite{agarwal11,wang14}. A rather
different tack is taken in the work by Jiang et
al.~\cite{jiang13distributed}, where the problem ${\cal L}$ is replaced
by a sequence of learning problems ${\cal L}_1, \hdots, {\cal L}_k$,
such that the index of $k$ represents the number of data points used
within the construction of the associated estimation (regression)
problem.  The solution of the $k$th learning problem is denoted by
$\theta_k$  and under suitable assumptions, $\{\theta_k\} \to \theta^*$,
where $\theta^*$ is a solution to the limiting problem.  If $\theta_k$
	is used within the scheme for computing $x$, then
	probabilistic convergence statements are provided for 
	$\{x_k\}$ in the context of distributed projection-based schemes for
	stochastic Nash games, leading to monotone variational inequality
			problems. We note that offline schemes provide a benchmark
			in terms of ascertaining the cost of obtaining observations
over time, rather than a priori, allowing us to derive metrics to relate
online schemes with their offline counterparts (such as through
		competitive ratios for instance).

\subsection{Contributions and outline}
In this paper, we investigate the global convergence and rate analysis
of joint first-order gradient methods under a variety of convexity,
   Lipschitzian, and boundedness requirements.  Suppose  $\gamma_{f,k}$
   and $\gamma_{g,k}$ denote steplength for optimization and learning at
   iteration $k$. If $\Pi_Y(y)$ denotes the Euclidean projection of a vector $y$ on the
set $Y$, then consider the following prototypical update:
\begin{algorithm}[{\bf Joint gradient scheme}]
\label{alg1}
{Given $x_0 \in X$ and $\theta_0 \in \Theta$ and sequences
	${\gamma_{f,k}, \gamma_{g,k}}$},
\begin{align}
x_{k+1} & := \Pi_X \left( x_k - \gamma_{f,k} \nabla_xf(x_k,\theta_k)\right), \quad
		& \forall  k \ \geq \ 0, \tag{Opt$(\theta_k)$} \\
\theta_{k+1} & := \Pi_{\Theta} \left( \theta_k - \gamma_{g,k}
	\nabla_{\theta} g(\theta_k)\right), \ \quad \quad
		& \forall  k \ \geq \ 0. \tag{Learn}
\end{align}
\end{algorithm}
}
{In our proposed scheme, we take a gradient step in the
	$x$-space using an estimate $\theta_k$ of $\theta^*$ and a
		simultaneous step in the $\theta-$space. Note that instead of using the exact gradient $\nabla_x
f(x_k,\theta^*)$ at the $k$th iterate, we employ $\nabla_x
	f(x_k,\theta_k)$ as the gradient estimate and $r_k=\nabla_x
	f(x_k,\theta_k)-\nabla_x f(x_k,\theta^*)$ represents the error in
	the gradient at iteration $k$.  Recent literature on inexact
	gradient schemes has investigated convergence properties and rate
	analysis for various schemes using inexact
	gradients with bounded
	error~\cite{Bertsekas:1999:GCG:588905.589301,d'Aspremont:2008:SOA:1653906.1653915,olivier:2013:MathProg,Necoara2014,NIPS2011_4452}.
	Our framework is distinct in that we develop a 	broader framework of
	gradient, extragradient, and regularized schemes for solving both
	optimization and variational inequality problems through the
	provision of modified algorithmic requirements (such as those on
			steplengths), asymptotics, and enhanced rate statements.
	The framework is developed under the caveat that the inexactness (in
	gradient estimates) decays to zero at a prescribed rate, a
	consequence of obtaining increasing accurate estimates of $\theta_k$
	when taking the gradient step in the $x-$space.  The main
	contributions of this work can be captured as follows:\\

\noindent {(i) \bf Convex optimization:} {In the first part of this
paper}, we develop asymptotics and rate statements for misspecified
convex optimization problems in smooth and nonsmooth settings and assume
that the learning problem is strongly convex, unless mentioned
otherwise: {(a) \em Smooth optimization problems:}  Our first set of results in the smooth regime demonstrate
that constant steplength schemes are convergent but lead to a
quantifiable decay in the linear convergence rate characteristic of
constant steplength gradient methods. Unfortunately, such techniques are
heavily reliant on the knowledge of certain problem parameters, in the
absence of which we show that diminishing steplength sequences are also
convergent. When the strong convexity assumptions are weakened, we note
that the presence of learning leads to modification of the convergence
rate in function values by an additive factor given by
$\|\theta_0-\theta^*\|{\cal
	O}\left(q_g^K + {1/K}\right)$ where $\theta_0$ represents our
	initial estimate of $\theta^*$, $q_g$ denotes the contractive
	constant in the learning problem. Finally, we demonstrate that when
	the learning problem loses strong convexity, under a suitably
	defined weak-sharpness requirement, global convergence can still be
	retained; {(b) \em Nonsmooth optimization problems:} When the
	optimization problem is nonsmooth, it can be shown that while the
	overall convergence rate of the proposed misspecified subgradient
	methods is still ${\cal O}({1/\sqrt K})$, a similar additive factor emerges
	of the form $\|\theta_0-\theta^*\|{\cal O}\left(q_g^K +
			{1/K}\right).$ A summary of the rate statements is provided
	in Table~\ref{tabrate}.\\
\begin{table}[htbp]
 \centering
\begin{tabular}{|c|c|c|c|c|c|c|c|c|c|c|c|}
  \hline
  & Computation & Computation \& Learning  \\
 \hline
  Strongly convex/diff. & Linear & Sublinear  \\
\hline
  convex/diff. & ${\cal O}(1/K)$ & ${\cal O}(1/K)+\|\theta_0-\theta^*\|{\cal O}(1/K + q_g^K)$  \\
\hline
  convex/nonsmooth. & ${\cal O}(1/\sqrt{K})$ & ${\cal
	  O}(1/\sqrt{K}) +\|\theta_0-\theta^*\| {\cal O}(1/K + q_g^K)$  \\
	  \hline
\end{tabular}
  \caption{Summary of rate statements}
  \label{tabrate}
\end{table}

\noindent {\bf (ii) Monotone variational inequality problems:} Variational
inequality problems represent a broad framework for capturing
optimization and equilibrium problems and assume particular relevance,
given that misspecification may arise in the constraints.  In the second
part of this paper, we consider two sets of schemes for resolving
misspecified variational inequality problems. Of these, the first avenue
is a constant steplenth misspecified extragradient scheme for monotone
variational inequality problems. However, this approach requires an
accurate estimate of suitable Lipschitz parameters. Consequently, we
present a  misspecified variant of the iterative Tikhonov regularization
framework to misspecified monotone regimes.\\

\noindent  {(iii) \bf  Numerics:} { We develop a set of test problems
	based on economic dispatch
	problems~\cite{kirschen2004a} with misspecified cost and demand. The
	numerics support the asymptotic statements and the validity of the
	bounds.

\section{Misspecified Convex Optimization}\label{sec:ConvOpt} 
In this section, we will consider two settings differentiated by the assumptions
on the function $f(x,\theta)$ in $(\cal C(\theta^*))$ and the function
$g(\theta)$ in $(\cal L)$. In Section~\ref{subsec:SmoothConvOpt}, we examine gradient-based methods where
$f(x,\theta)$ is differentiable in $x$ for every $\theta$ while in
Section~\ref{subsec:nonsmoothConvOpt}, we weaken the smoothness
requirement on $f(x,\theta)$. In each setting, we provide both constant
steplength schemes with associated complexity statements as well as
diminishing steplength schemes that are less reliant on problem
parameters. 
\subsection{Smooth convex optimization}\label{subsec:SmoothConvOpt}
{In this section, we consider regimes where both the optimization and
learning problems are differentiable and distinguish the cases based on
the convexity assumptions on the problem. Specifically, in subsection
~\ref{subsec:Str Opt Str learn}, we provide convergence statements and
rate analysis when both problems are strongly
convex.  Next, in subsection ~\ref{subsec: Conv opt Str learn}, we
weaken the strong convexity assumption on the computational problem to
mere convexity and provide rate statements in such settings.  Finally,
in subsection ~\ref{subsec: conv opt conv learn}, we relax the strongly
	convex assumption of the learning function and analyse the case when
	the solution set of the learning problem satisfies a weak sharpness
	assumption. We now list several key assumptions used during our
	analysis. We begin with a differentiability assumption on $f$ and
	$g$. }
\begin{assumption} \label{assump:differentiability}
	The {function} $f(x,\theta)$ is continuously differentiable in $x$ for all $\theta\in \Theta$ and function $g$ is
	continuously differentiable in $\theta$.
\end{assumption}

\noindent {Next, we impose a Lipschitzian assumption on $f$ in $x$, uniformly in
$\theta$.}
\begin{assumption} \label{assump:f_g_lips_contin_gradient}
           The gradient map $\nabla_xf(x;\theta)$ is Lipschitz
		   continuous in $x$  with constant $G_{f,x}$ uniformly over $\theta \in \Theta$ or
$$ \|\nabla_x f(x_1,\theta) - \nabla_x f(x_2,\theta) \|\leq G_{f,x}
\|x_1-x_2\|, \qquad \forall x_1, x_2 \in X, \quad \forall \theta \in \Theta. $$
Additionally,	the gradient map $\nabla_{\theta} g$ is Lipschitz continuous in $\theta$ with constant $G_g$.
\end{assumption}

\noindent {Finally, we impose a requirement on steplength sequences for the
computational and learning problems required in the diminishing
	steplength regime. }
\begin{assumption}\label{assump:step_length}
Let $\{\gamma_{f,k}\}$ and $\{\gamma_{g,k}\}$ be {diminishing}
nonnegative sequences chosen such that
	$\sum_{k=1}^\infty\gamma_{f,k}=\infty$,
	$\sum_{k=1}^\infty\gamma_{f,k}^2< \infty$,
	$\sum_{k=1}^\infty\gamma_{g,k}=\infty$, and
	$\sum\limits_{k=1}^\infty\gamma_{g,k}^2< \infty$.
\end{assumption}
\subsubsection{Strongly convex optimization and learning}\label{subsec:Str Opt Str learn}
In this subsection, convergence statements for the iterates
produced by Algorithm~\ref{alg1} are provided under the following strong
convexity assumption.
\begin{assumption}\label{assump:f_and_g_strconv}
The function $f$ is strongly convex in $x$ with constant $\eta_{f}$ for all $\theta \in \Theta$ and the function $g$ is strongly convex with constant $\eta_{g}$.
\end{assumption}

{We impose an additional Lipschitzian assumption on $\nabla_x
	f(x^*;\theta)$ in $\theta$.}
\begin{assumption}\label{assump:libs_contin_L_theta}
The gradient $\nabla_xf(x^*,\theta)$ is Lipschitz continuous in $\theta$ with constant $L_{\theta}$.
\end{assumption}

Before providing the main results, we introduce the following Lemma from \cite{Polyak87}:
\begin{lemma}
\label{polyak}
Let the following hold:
\begin{align*}
u_{k+1}\leq q_ku_k+\alpha_k,\quad 0\leq q_k<1, \quad \alpha_k\geq 0,
	\quad \sum_{k=1}^\infty(1- q_k)=\infty, \quad \lim_{k \to \infty} \frac{\alpha_k}{(1-q_k)} = 0.
\end{align*}
Then, $\lim_{k\to\infty} u_k\leq 0$. In particular, if $u_k\geq 0$, then $u_k \to 0$.
\end{lemma}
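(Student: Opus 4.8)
The plan is to reduce the statement to the single claim that $\limsup_{k\to\infty} u_k \le 0$ (I read the conclusion $\lim_{k\to\infty}u_k\le 0$ in this $\limsup$ sense). Once this is in hand, the final sentence is immediate: if $u_k \ge 0$ for all $k$, then $\liminf_{k\to\infty} u_k \ge 0$, and combining the two bounds gives $u_k \to 0$. To prove the $\limsup$ bound, I would fix an arbitrary $\epsilon > 0$ and exploit the hypothesis $\alpha_k/(1-q_k) \to 0$: there is an index $N$ such that $\alpha_k \le \epsilon(1-q_k)$ for all $k \ge N$. Substituting into the recursion yields $u_{k+1} \le q_k u_k + \epsilon(1-q_k)$ for every $k \ge N$.

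The next step homogenizes this recursion by a change of variables. Setting $v_k := u_k - \epsilon$ and subtracting $\epsilon$ from both sides gives $v_{k+1} \le q_k v_k$ for all $k \ge N$, since $\epsilon(1-q_k) - \epsilon = -\epsilon q_k$. This reduces matters to showing that any sequence with $v_{k+1} \le q_k v_k$, $0 \le q_k < 1$, and $\sum_k(1-q_k) = \infty$ satisfies $\limsup_k v_k \le 0$.

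I would handle the homogeneous recursion by a sign argument. If $v_m \le 0$ for some $m \ge N$, then $v_{m+1} \le q_m v_m \le 0$, so nonpositivity propagates and $\limsup_k v_k \le 0$ trivially. Otherwise $v_k > 0$ for all $k \ge N$, and unrolling gives $v_{N+j} \le \big(\prod_{i=N}^{N+j-1} q_i\big) v_N$; here every $q_i$ must be strictly positive, since a single $q_i = 0$ would force $v_{i+1}\le 0$. The crucial estimate is the elementary inequality $\ln q_i \le q_i - 1 = -(1-q_i)$, which yields $\prod_{i=N}^{N+j-1} q_i \le \exp\!\big(-\sum_{i=N}^{N+j-1}(1-q_i)\big) \to 0$ as $j \to \infty$, by the divergence $\sum_k(1-q_k)=\infty$. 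Hence $v_{N+j}\to 0$, so again $\limsup_k v_k \le 0$. In both cases $\limsup_k u_k \le \epsilon$, and letting $\epsilon \downarrow 0$ finishes the proof.

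The routine parts are the change of variables and the $\limsup$/$\liminf$ bookkeeping at the end. The step needing genuine care is the passage from the product $\prod q_i$ to the \emph{additive} divergence condition on $1-q_i$: this is exactly where the hypothesis $\sum_k(1-q_k)=\infty$ (rather than a direct assumption that $\prod_k q_k = 0$) is used, and it rests on the logarithm bound together with the separate treatment of any vanishing factor $q_i = 0$. I would therefore spell out that edge case explicitly to keep the product-to-sum conversion rigorous.
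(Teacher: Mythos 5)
Your proof is correct; the paper itself gives no argument for this lemma (it simply cites Polyak), and your argument is the standard one from that source: absorb $\alpha_k$ into $\epsilon(1-q_k)$ for large $k$, homogenize via $v_k=u_k-\epsilon$, and drive $\prod q_i$ to zero using $q_i\le e^{-(1-q_i)}$ and the divergence of $\sum(1-q_k)$. Your explicit handling of the sign of $v_k$ and of a vanishing factor $q_i=0$ is careful and sound (though the inequality $q\le e^{q-1}$ already covers $q=0$ directly, making the case split avoidable).
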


Our first result provides a convergence statement under a constant
steplength assumption.
\begin{proposition}[{\bf Constant step length scheme}]
\label{prop_conv_csl}
Let
Assumptions~\ref{assump:differentiability}~\ref{assump:f_g_lips_contin_gradient},~\ref{assump:f_and_g_strconv}
and ~\ref{assump:libs_contin_L_theta} hold and $\gamma_{f,k}$ and $\gamma_{g,k}$ are
fixed at $\gamma_{f}$ and $\gamma_{g}$, respectively so that
$0<\gamma_f< {2\slash G_{f,x}}$ and $0<\gamma_g<2/G_g$. Then, the sequence $\{x_k,\theta_k\}$ generated by Algorithm~\ref{alg1}
converges to $x^* \in X$ and $\theta^* \in \Theta$, respectively.
\end{proposition}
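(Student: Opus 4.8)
The plan is to decouple the analysis of the two coupled updates: first establish that the learning iterate $\{\theta_k\}$ converges to $\theta^*$ at a linear rate independently of $\{x_k\}$, then treat the $x$-update as an inexact projected-gradient step whose gradient error is controlled by $\|\theta_k-\theta^*\|$, and finally fold everything into the scalar recursion of Lemma~\ref{polyak}.

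For the learning process, I would observe that (Learn) is a standard constant-steplength projected-gradient method on the strongly convex, smooth function $g$. Since $\theta^*$ solves $({\cal L})$, the optimality condition gives the fixed-point identity $\theta^*=\Pi_{\Theta}(\theta^*-\gamma_g\nabla_\theta g(\theta^*))$. Subtracting this from the (Learn) update and using nonexpansivity of $\Pi_{\Theta}$,
$$\|\theta_{k+1}-\theta^*\|\le \big\|(\theta_k-\theta^*)-\gamma_g(\nabla_\theta g(\theta_k)-\nabla_\theta g(\theta^*))\big\|.$$
By Assumption~\ref{assump:f_and_g_strconv} ($\eta_g$-strong convexity) and Assumption~\ref{assump:f_g_lips_contin_gradient} ($G_g$-Lipschitz gradient), the map $I-\gamma_g\nabla_\theta g$ is a contraction with some modulus $q_g=\max\{|1-\gamma_g\eta_g|,|1-\gamma_g G_g|\}$, and the hypothesis $0<\gamma_g<2/G_g$ guarantees $q_g<1$. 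Hence $\|\theta_k-\theta^*\|\le q_g^k\|\theta_0-\theta^*\|\to 0$.

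The crux lies in the $x$-update. Using $x^*=\Pi_X(x^*-\gamma_f\nabla_x f(x^*,\theta^*))$ together with nonexpansivity of $\Pi_X$, I would bound $\|x_{k+1}-x^*\|$ by inserting the intermediate gradient $\nabla_x f(x^*,\theta_k)$ (rather than $\nabla_x f(x_k,\theta^*)$), splitting as
$$\|x_{k+1}-x^*\|\le \big\|(x_k-x^*)-\gamma_f(\nabla_x f(x_k,\theta_k)-\nabla_x f(x^*,\theta_k))\big\| + \gamma_f\big\|\nabla_x f(x^*,\theta_k)-\nabla_x f(x^*,\theta^*)\big\|.$$
This particular grouping is the main subtlety. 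The first term is a contraction step for $f(\cdot,\theta_k)$, which by Assumptions~\ref{assump:f_g_lips_contin_gradient} and~\ref{assump:f_and_g_strconv} (Lipschitz gradient and strong convexity holding uniformly in $\theta$) contracts with modulus $q_f=\max\{|1-\gamma_f\eta_f|,|1-\gamma_f G_{f,x}|\}<1$ since $0<\gamma_f<2/G_{f,x}$. The second term evaluates both gradients at the fixed point $x^*$, which is precisely where Assumption~\ref{assump:libs_contin_L_theta} applies, yielding $\le\gamma_f L_\theta\|\theta_k-\theta^*\|$. Had I instead isolated the error at the moving point $x_k$, I would have needed Lipschitz continuity in $\theta$ uniformly in $x$, a stronger hypothesis than the one assumed; this is why the decomposition must route the $\theta$-error through $x^*$. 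Combining the two terms gives the recursion
$$\|x_{k+1}-x^*\|\le q_f\|x_k-x^*\| + \gamma_f L_\theta q_g^k\|\theta_0-\theta^*\|,$$
to which I would apply Lemma~\ref{polyak} with $u_k=\|x_k-x^*\|$, constant $q_k\equiv q_f<1$ (so $\sum_k(1-q_f)=\infty$), and $\alpha_k=\gamma_f L_\theta q_g^k\|\theta_0-\theta^*\|$. Since $q_g<1$ forces $\alpha_k/(1-q_f)\to 0$, the lemma yields $x_k\to x^*$, completing the argument.
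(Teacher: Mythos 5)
Your proposal is correct and follows essentially the same route as the paper: the same decomposition of $\|x_{k+1}-x^*\|$ by inserting the intermediate gradient $\nabla_x f(x^*,\theta_k)$ (so that Assumption~\ref{assump:libs_contin_L_theta} applies at the fixed point $x^*$), a contraction bound on the first term from strong convexity and Lipschitz smoothness, a geometric rate for $\{\theta_k\}$, and Lemma~\ref{polyak} to close the recursion. The only difference is cosmetic --- you quote the contraction modulus $\max\{|1-\gamma\eta|,|1-\gamma G|\}$ where the paper derives $\sqrt{1-\gamma\eta(2-\gamma G)}$ via co-coercivity; both are valid and strictly less than one for $0<\gamma<2/G$.
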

\begin{proof}
By nonexpansivity of the Euclidean projector and triangle inequality, $\|x_{k+1} - x^*\|$ can be bounded as follows:
\begin{align}
& \quad \, \|x_{k+1} - x^*\|\notag \\
		&=\| \Pi_X ( x_k - \gamma_{f} \nabla_xf(x_k,\theta_k))-\Pi_X ( x^* - \gamma_{f} \nabla_xf(x^*,\theta^*))\|\notag\\
                                &\leq \| (x_k-x^*) - \gamma_{f} (\nabla_xf(x_k,\theta_k) - \nabla_{x} f(x^*,\theta^*))\|\notag \\
			                   &\leq  \| (x_k-x^*) - \gamma_f
							   (\nabla_xf(x_k,\theta_k) - \nabla_x
								f(x^*, \theta_k)\|+ \gamma_f\| \nabla_xf(x^*,\theta_k) - \nabla_{x}f(x^*,\theta^*))\|.\label{con_expan1}
\end{align}
The first term in \eqref{con_expan1} can be further bounded by first writing the following expansion:
\begin{align}
  \| (x_k-x^*) - \gamma_f (\nabla_xf(x_k,\theta_k) - \nabla_x
		f(x^*, \theta_k) )\|^2\notag 
	&=  \| x_k-x^*\|^2 + \gamma_f^2 \|\nabla_xf(x_k,\theta_k) - \nabla_x f(x^*, \theta_k)\|^2
                     \\&  -2\gamma_f  (x_k-x^*)^T (\nabla_xf(x_k,\theta_k) - \nabla_x f(x^*, \theta_k)). \label{proof: ineq1}
\end{align}
Under the assumption of Lipschitz continuity of $\nabla_x f(x,\theta)$ in $x$, it follows that 
$$ \|\nabla_xf(x_k,\theta_k) - \nabla_x f(x^*, \theta_k)\|^2 \leq G_{f,x}(x_k-x^*)^T (\nabla_xf(x_k,\theta_k) - \nabla_x f(x^*, \theta_k)).$$
By combining the above inequality with~\eqref{proof: ineq1}, we obtain 
\begin{align}
 & \quad \| (x_k-x^*) - \gamma_f (\nabla_xf(x_k,\theta_k) - \nabla_x
		f(x^*, \theta_k) )\|^2\notag\\
                        & \le\| x_k-x^*\|^2 -\gamma_f(2-\gamma_fG_{f,x})(x_k-x^*)^T (\nabla_xf(x_k,\theta_k) - \nabla_x f(x^*, \theta_k)).\label{proof:ineq2}
\end{align}
In addition, under strong convexity of $f(x;\theta)$ in $x$, it follows that 
 \begin{align*}
 (x_k-x^*)^T (\nabla_xf(x_k,\theta_k) - \nabla_x f(x^*, \theta_k))\geq \eta_f \|x_k-x^*\|^2.
 \end{align*}
Thus, inequality~\eqref{proof:ineq2} becomes 
\begin{align}
\| (x_k-x^*) - \gamma_f (\nabla_xf(x_k,\theta_k) - \nabla_x
		f(x^*, \theta_k) )\|^2
	&\leq  \| x_k-x^*\|^2 
                       -\gamma_f\eta_f(2-\gamma_f G_{f,x})  \|x_k-x^*\|^2\notag\\
       &=\big(1-\gamma_f\eta_f(2-\gamma_f G_{f,x}) \big) \|x_k-x^*\|^2.\label{con_expan2}
\end{align}
Note that since $\gamma_f< ({2/G_{f,x}})$, then it follows that $\big(1-\gamma_f\eta_f(2-\gamma_f G_{f,x}) \big) <1.$ The second term in $\eqref{con_expan1}$ is bounded by leveraging the
 Lipschitz continuity of $\nabla_{x} f(x^*,\theta)$ in $\theta$:
\begin{align}
\label{con_tta_bnd}
&\| \gamma_f(\nabla_xf(x^*,\theta_k) - \nabla_{x} f(x^*,\theta^*))\|\leq
\gamma_f L_{\theta} \| \theta_k-\theta^*\|.
\end{align}
Now by combining \eqref{con_expan1}, \eqref{con_expan2}, and
\eqref{con_tta_bnd}, we obtain the following bound:
\begin{align}
\label{bnd_csl}
                  \|x_{k+1} - x^*\|&  \leq q_{x} \| x_k-x^*\| +
				  q_{\theta} \| \theta_k-\theta^*\|, 
\end{align}
where $q_{x}\triangleq   \sqrt{\big(1-\gamma_f\eta_f(2-\gamma_f G_{f,x}) \big)}$
and $q_{\theta}\triangleq \gamma_f L_{\theta}$. To show that $\|x_k -
x^*\|\to 0$ as $k\to\infty$, we may employ Lemma~\ref{polyak}. This
requires showing the following:
\begin{align*}
{ (i)} \quad \sum_{k=1}^\infty(1-q_{x})=\infty; \qquad
{ (ii)} \quad \lim_{k\to\infty} \frac{q_{f,\theta} \|
\theta_k-\theta^*\|}{1-q_{x}}=0.
\end{align*}
Since $q_{x}<1$, $(i)$ is
satisfied. In addition, by the Lipschitz continuity of $\nabla_{\theta} g$
and choosing $\gamma_g$ such that $0<\gamma_g<2/G_g$, $\|\theta_k-\theta^*\|\to 0$ as
$k\to\infty$. Consequently,  condition $(ii)$ is met as well,
	completing
the proof.
\end{proof}

In many instances, while we may be able to claim strong convexity or
Lipschitz continuity, the precise bounds may be unavailable. However,
an incorrect choice of a steplength may lead to divergence, motivating
the need for an alternate approach. To this end, we employ a diminishing
steplength sequence that does not necessitate the knowledge of either
the convexity constant or the Lipschitz constant.  We outline the proof
of convergence in the next Proposition.\\

\begin{proposition}[{\bf Diminishing steplength schemes}]
\label{Prop:dimin_step}
Let
Assumptions~\ref{assump:differentiability},~\ref{assump:f_g_lips_contin_gradient},~\ref{assump:f_and_g_strconv},
	and ~\ref{assump:libs_contin_L_theta} hold. Additionally, let
		$\gamma_{f,k}$ be defined based on Assumption
			~\ref{assump:step_length} and {$\gamma_{g,k}$ be fixed at $\gamma_g$ so that $0<\gamma_g < 2/G_g$.}Then, the sequence
	$\{x_k,\theta_k\}$ generated by Algorithm \ref{alg1}
converges to $x^* \in X$ and $\theta^* \in \Theta$, respectively.
\end{proposition}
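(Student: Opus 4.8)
The plan is to reduce the statement to an application of Lemma~\ref{polyak}, exactly as in the proof of Proposition~\ref{prop_conv_csl}, but now with a \emph{step-dependent} contraction factor arising from the diminishing sequence $\{\gamma_{f,k}\}$. First I would dispose of the learning iterates. Since $\gamma_{g,k}$ is held fixed at $\gamma_g$ with $0<\gamma_g<2/G_g$, the update (Learn) is precisely a constant-steplength gradient step for the strongly convex, Lipschitz-gradient function $g$. The same nonexpansivity-plus-strong-convexity argument used for the $x$-iterates in Proposition~\ref{prop_conv_csl} yields a genuine contraction $\|\theta_{k+1}-\theta^*\| \le q_g \|\theta_k-\theta^*\|$ with $q_g \triangleq \sqrt{1-\gamma_g\eta_g(2-\gamma_g G_g)}<1$, so that $\|\theta_k-\theta^*\| \le q_g^k\|\theta_0-\theta^*\| \to 0$ linearly. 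This decaying misspecification is the input I will feed into the $x$-analysis.

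Next I would repeat the chain of inequalities \eqref{con_expan1}, \eqref{con_expan2}, and \eqref{con_tta_bnd} verbatim, but with $\gamma_f$ replaced by $\gamma_{f,k}$, to obtain the per-iteration recursion
$$ u_{k+1} \le q_{x,k}\, u_k + \alpha_k, \qquad u_k \triangleq \|x_k-x^*\|, \quad \alpha_k \triangleq \gamma_{f,k} L_\theta \|\theta_k-\theta^*\|, $$
with $q_{x,k} \triangleq \sqrt{\,1-\gamma_{f,k}\eta_f(2-\gamma_{f,k}G_{f,x})\,}$, which is the step-dependent analogue of \eqref{bnd_csl}. Nothing in that derivation used constancy of the step, so it carries over unchanged; note also that the quantity under the root is always nonnegative (its discriminant in $\gamma_{f,k}$ is $4\eta_f(\eta_f-G_{f,x})\le 0$), so $q_{x,k}$ is well defined.

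The remaining work, and the only place requiring care, is verifying the two hypotheses of Lemma~\ref{polyak} for the step-dependent factor $q_{x,k}$. Since $\{\gamma_{f,k}\}$ is diminishing, for all $k$ beyond some index $K_0$ we have $\gamma_{f,k}<2/G_{f,x}$, which guarantees $0\le q_{x,k}<1$, and starting the recursion at $K_0$ costs nothing. The heart of the matter is the square-root expansion: writing $s_k \triangleq \gamma_{f,k}\eta_f(2-\gamma_{f,k}G_{f,x})\in[0,1]$ and using $1-\sqrt{1-s_k}=s_k/(1+\sqrt{1-s_k})$, I would sandwich $\tfrac{1}{2}\eta_f\gamma_{f,k} \le 1-q_{x,k}\le 2\eta_f\gamma_{f,k}$ for $k$ large (where $2-\gamma_{f,k}G_{f,x}\in[1,2]$). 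The lower bound together with $\sum_k\gamma_{f,k}=\infty$ gives condition (i), namely $\sum_k(1-q_{x,k})=\infty$; the same lower bound together with $\|\theta_k-\theta^*\|\to 0$ gives condition (ii), since $\alpha_k/(1-q_{x,k})\le (2L_\theta/\eta_f)\,\|\theta_k-\theta^*\|\to 0$. Lemma~\ref{polyak} then yields $u_k\to 0$, i.e. $x_k\to x^*$, and combined with the first paragraph this completes the proof.

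I expect the only genuine obstacle to be the two-sided estimate on $1-q_{x,k}$, as it must simultaneously be summable-divergent from below yet large enough to dominate $\alpha_k$; everything else is a transcription of the constant-step argument. It is worth remarking that the square-summability hypothesis $\sum\gamma_{f,k}^2<\infty$ from Assumption~\ref{assump:step_length} is \emph{not} actually used here: only the divergence $\sum\gamma_{f,k}=\infty$ and $\gamma_{f,k}\to 0$ enter, the latter solely to ensure $q_{x,k}<1$ eventually.
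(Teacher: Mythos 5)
Your proposal is correct and follows essentially the same route as the paper: derive the recursion $\|x_{k+1}-x^*\|\le q_{x,k}\|x_k-x^*\|+\gamma_{f,k}L_\theta\|\theta_k-\theta^*\|$ by transcribing the constant-step argument and then verify the two hypotheses of Lemma~\ref{polyak}. Your two-sided estimate on $1-q_{x,k}$ makes explicit the steps the paper only asserts (divergence of $\sum_k(1-q_{x,k})$ and the vanishing of $\alpha_k/(1-q_{x,k})$, which the paper justifies via the identity $1-q_{x,k}=\tfrac{\gamma_{f,k}\eta_f(2-\gamma_{f,k}G_{f,x})}{1+q_{x,k}}$ while misstating $q_{x,k}\to 0$ instead of $q_{x,k}\to 1$), and your closing observation that square-summability of $\gamma_{f,k}$ is not needed here is accurate.
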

\begin{proof}
We use a similar line of argument as in Proposition~\ref{prop_conv_csl} 
to obtain the following bound:
\begin{align}
   \|x_{k+1} - x^*\| \leq q_{x,k} \| x_k-x^*\| + q_{\theta,k} \|
   \theta_k-\theta^*\|,
\end{align}
where for sufficiently large $k$, we have that $q_{x,k}\triangleq {(1-  \gamma_{f,k}\eta_{f}(2-\gamma_{f,k}
			G_{f,x}))^{1/2}} < 1$
and $q_{\theta,k}\triangleq \gamma_{f,k} L_{\theta}$. By Assumption~\ref{assump:step_length}, we have that
$\sum_{k=1}^\infty(1-q_{x,k})=\infty$. Furthermore,  we have the
following simplification of condition (ii) of Lemma~\ref{polyak}:
\begin{align*}
\lim_{k\to\infty}\frac{\gamma_{f,k} L_{\theta}\| \theta_k-\theta^*\|}
{(1-q_{x,k})} & =
\lim_{k\to\infty} \frac {(1+q_{x,k}) L_{\theta}\| \theta_k-\theta^*\|}{{\eta_{f}(2-\gamma_{f,k}G_{f,x})}}=0
\end{align*}
since  $q_{x,k}\to 0$ and $\gamma_{f,k}\to 0$ and
$\|\theta_k-\theta^*\|\to0$ as $k\to\infty$. Therefore, the conditions of Lemma~\ref{polyak}
are satisfied and $\|x_k-x^*\|\to0$  as $k\to\infty$.
\end{proof}

It is well known that under strong convexity assumption, the
{iterates generated from the}  projected
gradient method converge at a geometric rate~\cite{bertsekas03convex}.
However, when learning is incorporated, it is expected that this rate
drops. Next, we analyze the impact introduced by learning.

\begin{proposition}[{\bf Rate analysis in strongly convex regimes}]
\label{Prop: con_rate_strconv}
Let
Assumptions~\ref{assump:differentiability},~\ref{assump:f_g_lips_contin_gradient},~\ref{assump:f_and_g_strconv}
and ~\ref{assump:libs_contin_L_theta} hold.  In addition, assume that
$\gamma_{f}$ and $\gamma_{g}$ are chosen such that $0<\gamma_{f} <
2/G_{f,x}$ and $0<\gamma_g {<}
{2/G_g}$. Let $\{x_k,\theta_k\}$ be the sequence generated by
Algorithm 1. Then for every $k \geq 0$,
  we have the following:
\begin{align*}
 \|x_{k+1}-x^*\| & \leq q_{x}^{k+1} \|x_0-x^*\| + (k+1)q_{\theta} q^k
	\|\theta_0-\theta^*\|,
\end{align*}
where $q_{x}\triangleq (1-\gamma_{f} \eta_{f}(2-\gamma_f G_{f,x}))^{1/2}$, $q_{\theta}\triangleq
\gamma_{f} L_{\theta}$, $q_{g}\triangleq (1-\gamma_{g}
		\eta_{g}(2-\gamma_g G_{g}))^{1/2}$,
and $q\triangleq \max(q_x,g_g)$.
\end{proposition}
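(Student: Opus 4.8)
The plan is to reduce the proposition to a scalar linear recursion in $\|x_k-x^*\|$ driven by a geometrically decaying forcing term, and then to unroll that recursion explicitly. Two ingredients are needed. The first is the one-step bound \eqref{bnd_csl} already derived in the proof of Proposition~\ref{prop_conv_csl}, namely $\|x_{k+1}-x^*\|\leq q_x\|x_k-x^*\|+q_\theta\|\theta_k-\theta^*\|$. The second is a geometric decay estimate for the learning iterates, $\|\theta_k-\theta^*\|\leq q_g^k\|\theta_0-\theta^*\|$, which I would obtain by repeating the strong-convexity and Lipschitz argument of Proposition~\ref{prop_conv_csl} applied to the (Learn) update. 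The key simplification is that, unlike the $x$-update, the $\theta$-update carries no misspecification: $\theta^*$ is the exact minimizer of the strongly convex $g$ over $\Theta$, so there is no analogue of the cross term $q_\theta\|\theta_k-\theta^*\|$, and one obtains the clean contraction $\|\theta_{k+1}-\theta^*\|\leq q_g\|\theta_k-\theta^*\|$ with $q_g=(1-\gamma_g\eta_g(2-\gamma_gG_g))^{1/2}<1$, which iterates immediately to the stated bound.

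Substituting this learning estimate into \eqref{bnd_csl} produces the scalar recursion $u_{k+1}\leq q_x u_k + c\,q_g^k$, where $u_k\triangleq\|x_k-x^*\|$ and $c\triangleq q_\theta\|\theta_0-\theta^*\|$. I would then unroll this by a short induction (or direct expansion) to reach
\[
u_{k+1}\leq q_x^{k+1}u_0 + c\sum_{j=0}^{k}q_x^{\,k-j}q_g^{\,j}.
\]
The first term is exactly the claimed geometric contribution $q_x^{k+1}\|x_0-x^*\|$, so everything hinges on controlling the convolution sum in the second term.

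The main obstacle, and really the only nonroutine step, is bounding $\sum_{j=0}^{k}q_x^{k-j}q_g^{j}$ cleanly without introducing an awkward dependence on the ratio $q_x/q_g$. The device is to set $q\triangleq\max(q_x,q_g)$ (the definition intended in the statement), so that every summand obeys $q_x^{k-j}q_g^{j}\leq q^{k-j}q^{j}=q^k$ uniformly in $j$; since there are $k+1$ summands, the sum is at most $(k+1)q^k$. Feeding this back gives $u_{k+1}\leq q_x^{k+1}u_0 + c(k+1)q^k$, which is precisely the asserted inequality once $c=q_\theta\|\theta_0-\theta^*\|$ is expanded. I would note that this estimate is tight up to the constant when $q_x=q_g$, and it is exactly what yields the characteristic $(k+1)q^k$ (equivalently $\mathcal{O}(Kq_g^K)$) degradation factor, quantifying how the linear convergence of the pure computation problem is slowed by the coupling to the learning process.
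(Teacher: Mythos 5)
Your proposal is correct and follows essentially the same route as the paper: both start from the one-step bound \eqref{bnd_csl}, invoke the geometric contraction $\|\theta_k-\theta^*\|\leq q_g^k\|\theta_0-\theta^*\|$ of the learning iterates, unroll the recursion to the convolution sum $\sum q_x^{k-j}q_g^{j}$, and bound each summand by $q^k$ with $q=\max(q_x,q_g)$ to obtain the $(k+1)q_\theta q^k\|\theta_0-\theta^*\|$ term. The only difference is the order of substitution (you insert the $\theta$-bound before unrolling, the paper after), which is immaterial.
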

\begin{proof}
Under the assumption of strong convexity of $g$, the learning algorithm
has a globally geometric rate of convergence when employing constant
stepsize $\gamma_g$ where $0<\gamma_g < 2/G_g$; specifically,
\begin{align}
\label{geom_cr}
 \|\theta_{k+1}-\theta^*\|\leq q_{g}^{k+1} \|\theta_0-\theta^*\|, \quad
 \forall k \ \geq  \ 0.
\end{align}
{where $q_{g}\triangleq (1-\gamma_{g}
		\eta_{g}(2-\gamma_g G_{g}))^{1/2}<1$ since $\gamma_g<2/G_{g}$
}. To obtain the convergence rate for the joint scheme, we expand $\eqref{bnd_csl}$ to obtain the following bound:
\begin{align}
\|x_{k+1}-x^*\| & \leq q_{x}^{k+1}\|x_0-x^*\|
+q_{\theta}\sum_{i=0}^k q_{x}^i\|\theta_{k-i}-\theta^*\| \qquad
\forall \  k \geq 0 .
\label{bnd_expn}
\end{align}
We may further expand $\eqref{bnd_expn}$ using $\eqref{geom_cr}$ to simplify the bound as below:
\begin{align*}
 \|x_{k+1}-x^*\| & \leq q_{x}^{k+1}\|x_0-x^*\|+q_{\theta}
 \sum_{i=0}^k q_{x}^i q_g^{k-i}\|\theta_0-\theta^*\|\leq
 q_{x}^{k+1} \|x_0-x^*\| + \underbrace{(k+1)q_{\theta} q^k
	\|\theta_0-\theta^*\|,}_{\tiny \mbox{Degradation from learning}}
\end{align*}
where $q\triangleq\max(q_{x},q_g)$. Note that condition {$\gamma_f <
	2/G_{f,x}$} guarantees that {$q_{x} <1$}, implying that $q =
	\max(q_x,q_g)$ is less than 1.
\end{proof}

{\bf Remark:} Notably, the presence of learning leads to a degradation
in the convergence rate from the standard linear rate to
a sub-linear rate. Furthermore, it is easily seen that when we have
access to the true $\theta^*$,  the original rate may be recovered.

\subsubsection{Convex optimization with strongly convex learning}\label{subsec: Conv opt Str learn}
In this subsection, we weaken the rather stringent assumptions of
strong convexity of $f(x,\theta)$ in $x$ for every $\theta \in \Theta$.

{\sc {Assumption}}~\ref{assump:f_and_g_strconv}b.
{\em The function $f$ is convex in $x$  for all
	$\theta \in \Theta$ and the function $g$ is strongly convex with
		constant $\eta_{g}$.}

In addition, we make the following assumptions:
\begin{assumption}
\label{assump:compact and  lips cont}
	\be
	\item[(a)] The sets $X$ and $\Theta$ are compact and $\sup_{x\in X}\|x\|\leq C$, where $C$ is a constant.
    \item[(b)] The gradient map $\nabla_x f(x;\theta)$ is uniformly Lipschitz continuous in $\theta$ with constant $G_{f,\theta}$:
       $$\|\nabla_x f(x,\theta_1)-\nabla_x f(x,\theta_2)\|\leq
	   G_{f,\theta}\|\theta_1-\theta_2\|, \quad \forall
	   \theta_1,\theta_2\in \Theta, x\in X.$$
	\ee
\end{assumption}
\begin{assumption}
\label{assump:lips_cont}
There exists a constant $L_{f,\theta}$ such that
$|f(x,\theta_1)-f(x,\theta_2)|\leq L_{f,\theta}\|\theta_1-\theta_2\|, \quad \forall \theta_1,\theta_2 \in \Theta, x\in X.$
\end{assumption}

Before presenting the main result, we introduce the following Lemma from
\cite{MR0343355}.
\begin{lemma}
\label{polyak2}
Let  $\beta_k, \upsilon_k, \alpha_k\geq 0$ for all $k$. Furthermore,
	 suppose the following holds for all $k$:
\begin{align*}
u_{k+1}\leq (1+\beta_k)u_k-\upsilon_k+\alpha_k.
\end{align*}
Suppose $\sum_{k} \alpha_k < \infty$ and $\sum_{k} \beta_k < \infty$.
Then  $\lim_{k\to\infty} u_k = \bar u \geq 0$ and $\sum_k \upsilon_k <
\infty$.
\end{lemma}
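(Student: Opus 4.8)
The plan is to prove this deterministic analogue of the Robbins--Siegmund supermartingale convergence lemma in three stages: first normalize away the multiplicative factor $(1+\beta_k)$, then establish convergence of the resulting purely additive recursion via an auxiliary tail-corrected sequence, and finally recover summability of $\{\upsilon_k\}$ by telescoping. Throughout I would invoke the implicit standing hypothesis $u_k \geq 0$, which is what renders the conclusion $\bar u \geq 0$ meaningful and which holds in every intended application (where $u_k$ is a squared distance).

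First I would exploit $\sum_k \beta_k < \infty$. Setting $P_k \triangleq \prod_{j=1}^{k-1}(1+\beta_j)$ with $P_1 \triangleq 1$, this condition guarantees that $P_k$ is nondecreasing and converges to a finite limit, so that $1 \leq P_k \leq B$ for all $k$, where $B \triangleq \prod_{j=1}^\infty (1+\beta_j) < \infty$. Dividing the hypothesized recursion by $P_{k+1} = P_k(1+\beta_k)$ and writing $w_k \triangleq u_k/P_k$, $\tilde\upsilon_k \triangleq \upsilon_k/P_{k+1}$, and $\tilde\alpha_k \triangleq \alpha_k/P_{k+1}$ yields the additive inequality $w_{k+1} \leq w_k - \tilde\upsilon_k + \tilde\alpha_k$. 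Since $P_{k+1} \geq 1$, this preserves $\sum_k \tilde\alpha_k \leq \sum_k \alpha_k < \infty$ while keeping $\tilde\upsilon_k \geq 0$ and $w_k \geq 0$.

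Next I would introduce the tail-corrected sequence $s_k \triangleq w_k + \sum_{j=k}^\infty \tilde\alpha_j$, which is well defined because $\sum_j \tilde\alpha_j < \infty$. Substituting the additive inequality shows $s_{k+1} \leq s_k - \tilde\upsilon_k \leq s_k$, so $\{s_k\}$ is nonincreasing; it is also bounded below by $0$ (as $s_k \geq w_k \geq 0$), hence converges to some $s_\infty \geq 0$. Because the tail $\sum_{j=k}^\infty \tilde\alpha_j \to 0$, this forces $w_k = s_k - \sum_{j\geq k}\tilde\alpha_j \to s_\infty$, and therefore $u_k = P_k w_k \to P_\infty s_\infty =: \bar u \geq 0$, where $P_\infty \triangleq \lim_k P_k$. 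Finally, telescoping $s_{k+1} \leq s_k - \tilde\upsilon_k$ over $k = 1,\dots,N$ gives $\sum_{k=1}^N \tilde\upsilon_k \leq s_1 - s_{N+1} \leq s_1 < \infty$; since $\upsilon_k = P_{k+1}\tilde\upsilon_k \leq B\,\tilde\upsilon_k$, summability transfers as $\sum_k \upsilon_k \leq B \sum_k \tilde\upsilon_k < \infty$.

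The only genuinely delicate point is the multiplicative factor $(1+\beta_k)$: a naive monotonicity argument fails because $u_{k+1}$ may exceed $u_k$. The rescaling by $P_k$ is precisely what converts the ``almost-supermartingale'' into a true additive one while keeping all constants finite, and it is exactly here that $\sum_k \beta_k < \infty$ (equivalently, convergence of $\prod_k (1+\beta_k)$) is essential; the remainder is the standard tail-sum trick. I expect no other obstacle, modulo the intended nonnegativity of $u_k$.
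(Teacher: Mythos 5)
The paper does not actually prove this lemma: it is stated as a quoted result from the cited reference (Robbins--Siegmund), so there is no in-paper proof to compare against. Your argument is correct and is precisely the standard deterministic proof of that result --- normalizing by the convergent product $\prod_j(1+\beta_j)$ to reduce to a purely additive inequality, then applying the tail-sum correction to get a nonincreasing, bounded-below sequence, and telescoping for $\sum_k \upsilon_k<\infty$. You are also right to flag that $u_k\geq 0$ must be read as an implicit hypothesis (without it the conclusion $\bar u\geq 0$ is false, e.g.\ for a constant negative sequence with $\beta_k=\upsilon_k=\alpha_k=0$), and this holds in every application in the paper, where $u_k$ is a squared distance.
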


In the following proposition, we prove the convergence of the iterates
generated by Algorithm ~\ref{alg1} under the convexity requirements on
$f(x;\theta)$. We also provide the rate statement.\\

\begin{proposition}[{\bf Constant steplength scheme with averaging}]
\label{theo_conv_convex}
Let Assumptions ~\ref{assump:differentiability},~\ref{assump:f_g_lips_contin_gradient},~\ref{assump:f_and_g_strconv}b,~\ref{assump:compact and
		lips cont} and ~\ref{assump:lips_cont} hold and stepsizes
		$\gamma_{f,k}$ and $\gamma_{g,k}$ be fixed at
		constants $\gamma_{f}$ and $\gamma_{g}$ so that
$0<\gamma_g<2/G_g$ and $0<\gamma_f \leq 1/ G_{f,x}$. Let the
sequence $\{x_k,\theta_k\}$ be generated by Algorithm ~\ref{alg1} and
suppose $\xbar_k$ is defined as $$\xbar_{k} \triangleq
\frac{\displaystyle \sum_{i=0}^{k-1} x_{i+1}}{k}.$$
Then the following hold:
\begin{enumerate}
\item[(i)] In addition, if $a_x =
\frac{\|x_0-x^*\|^2}{2\gamma_f}$ and
$b_\theta \triangleq \frac{CG_{f,\theta}}{1-q_g},$  then the following holds:
 \begin{align*}
\left| f\left( \xbar_{K},\theta_K \right) -f(x^*,\theta^*)\right| \leq
\frac{a_x}{K} + \|\theta_0-\theta^*\|\left(\frac{b_{\theta}}{K} + L_{f,\theta} q_g^K \right).
\end{align*}
\item[(ii)] ${\displaystyle \lim_{k\to \infty}}  f\left(\xbar_{k},\theta_k \right) =f(x^*,\theta^*).$
\end{enumerate}
\end{proposition}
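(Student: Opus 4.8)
The plan is to split the target gap at its two distinct error sources — the current learning error $\|\theta_K-\theta^*\|$ and the optimization error carried by the averaged iterate — and to control each separately. Concretely, I would write
\[
f(\xbar_K,\theta_K) - f(x^*,\theta^*) = \left[f(\xbar_K,\theta_K) - f(\xbar_K,\theta^*)\right] + \left[f(\xbar_K,\theta^*) - f(x^*,\theta^*)\right].
\]
The first bracket is bounded in absolute value by $L_{f,\theta}\|\theta_K-\theta^*\|$ via Assumption~\ref{assump:lips_cont}, and since strong convexity of $g$ yields the geometric decay $\|\theta_K-\theta^*\|\le q_g^{K}\|\theta_0-\theta^*\|$ (exactly as in \eqref{geom_cr}), this produces the term $L_{f,\theta}q_g^{K}\|\theta_0-\theta^*\|$. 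It then remains to show the second bracket is at most $a_x/K + b_\theta\|\theta_0-\theta^*\|/K$; this is the substantive part.

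For the second bracket I would run a standard averaged-gradient analysis \emph{at the fixed function} $f(\cdot,\theta^*)$, treating $\nabla_x f(x_k,\theta_k)$ as an inexact gradient of $f(\cdot,\theta^*)$ with error $r_k=\nabla_x f(x_k,\theta_k)-\nabla_x f(x_k,\theta^*)$, where $\|r_k\|\le G_{f,\theta}\|\theta_k-\theta^*\|$ by Assumption~\ref{assump:compact and lips cont}(b). The engine is a per-iteration inequality for $f(x_{k+1},\theta^*)-f(x^*,\theta^*)$, which I would assemble from two ingredients: (a) the variational characterization of the projection $x_{k+1}=\Pi_X(x_k-\gamma_f\nabla_x f(x_k,\theta_k))$, giving $\gamma_f\langle \nabla_x f(x_k,\theta_k),x_{k+1}-x^*\rangle \le \tfrac{1}{2}(\|x_k-x^*\|^2-\|x_{k+1}-x^*\|^2-\|x_{k+1}-x_k\|^2)$; and (b) the descent lemma together with convexity of $f(\cdot,\theta^*)$, giving $f(x_{k+1},\theta^*)-f(x^*,\theta^*)\le \langle \nabla_x f(x_k,\theta^*),x_{k+1}-x^*\rangle + \tfrac{G_{f,x}}{2}\|x_{k+1}-x_k\|^2$. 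Writing $\nabla_x f(x_k,\theta^*)=\nabla_x f(x_k,\theta_k)-r_k$ and substituting (a) into (b), the stepsize condition $\gamma_f\le 1/G_{f,x}$ makes the coefficient of $\|x_{k+1}-x_k\|^2$ nonpositive so it can be discarded, leaving
\[
\gamma_f\left[f(x_{k+1},\theta^*)-f(x^*,\theta^*)\right] \le \tfrac{1}{2}\left(\|x_k-x^*\|^2-\|x_{k+1}-x^*\|^2\right) + \gamma_f\|r_k\|\,\|x_{k+1}-x^*\|.
\]

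Finally I would sum this over $k=0,\dots,K-1$ so the quadratic terms telescope (leaving $\tfrac{1}{2}\|x_0-x^*\|^2$), divide by $\gamma_f K$, and apply Jensen's inequality through convexity of $f(\cdot,\theta^*)$ to replace $\tfrac{1}{K}\sum_k f(x_{k+1},\theta^*)$ by $f(\xbar_K,\theta^*)$; this yields the $a_x/K$ term with $a_x=\|x_0-x^*\|^2/(2\gamma_f)$. The accumulated error $\tfrac{1}{K}\sum_{k=0}^{K-1}\|r_k\|\,\|x_{k+1}-x^*\|$ is handled by bounding $\|x_{k+1}-x^*\|$ through the compactness constant (Assumption~\ref{assump:compact and lips cont}(a)) and $\|r_k\|\le G_{f,\theta}q_g^{k}\|\theta_0-\theta^*\|$, so the geometric series sums to $1/(1-q_g)$, producing $b_\theta\|\theta_0-\theta^*\|/K$ with $b_\theta=CG_{f,\theta}/(1-q_g)$. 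For the absolute value in (i), the matching lower bound follows because $\xbar_K\in X$ (a convex combination of points in the convex set $X$) and $x^*$ minimizes $f(\cdot,\theta^*)$ over $X$, so $f(\xbar_K,\theta^*)-f(x^*,\theta^*)\ge 0$ and only the $-L_{f,\theta}q_g^{K}\|\theta_0-\theta^*\|$ contribution can make the full expression negative. Part (ii) is then immediate since $q_g<1$ forces every term on the right to vanish as $K\to\infty$. The main obstacle is the per-iteration inequality — correctly cancelling the $\|x_{k+1}-x_k\|^2$ term using $\gamma_f\le 1/G_{f,x}$ while cleanly isolating the inexactness contribution $\gamma_f\|r_k\|\,\|x_{k+1}-x^*\|$; once that recursion is in hand, the remaining steps (telescoping, Jensen, summing a geometric series, and the Lipschitz split in $\theta$) are routine.
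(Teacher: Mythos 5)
Your proposal is correct and follows essentially the same route as the paper's proof: the same per-iteration inequality assembled from the descent lemma, convexity, and the projection's variational characterization (with $\gamma_f \le 1/G_{f,x}$ absorbing the $\|x_{k+1}-x_k\|^2$ term), followed by telescoping, Jensen's inequality, the geometric bound $\|r_i\|\le G_{f,\theta}q_g^i\|\theta_0-\theta^*\|$ with the compactness constant $C$, and the final Lipschitz-in-$\theta$ split. Your explicit remark that $f(\xbar_K,\theta^*)-f(x^*,\theta^*)\ge 0$ (needed to pass to the absolute value) is a small point the paper leaves implicit.
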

\begin{proof}
(i) \ Recall the following by the mean-value theorem, the Cauchy-Schwartz
inequality, and {Lipschitz continuity of the gradient map $\nabla_x
f(x,\theta^*)$} in $x$:
\begin{align*}
		    f(y,\theta^*)& = f(x,\theta^*) + \nabla_x
			f(x,\theta^*)^T(y-x)+ \int_0^1 (\nabla_x
					f(x+t(y-x),\theta^*) - \nabla_x f(x,\theta^*)) (y-x) dt  \\
					& \leq f(x,\theta^*) +  \nabla_x
					f(x,\theta^*)^T(y-x) + \int_0^1 \|(\nabla_x
					f(x+t(y-x),\theta^*) - \nabla_x
					f(x,\theta^*))\|\|(y-x)\| dt  \\
			& \leq f(x,\theta^*) +  \nabla_x f(x,\theta^*)^T(y-x) + \int_0^1 G_{f,x}t\|y-x\|\|(y-x)\| dt  \\
			& = f(x,\theta^*) +  \nabla_x f(x,\theta^*)^T(y-x) + {1 \over 2}G_{f,x}\|y-x\|^2.
\end{align*}
If we set $y=x_{i+1}$ and $x=x_i$ and since $G_{f,x}\leq {1\over \gamma_f}$, we have the following:
\begin{align} \notag
 f(x_{i+1},\theta^*) & \leq f(x_i,\theta^*) +(\nabla_x f(x_i,\theta_i)-r_i)^T(x_{i+1}-x_i) + {G_{f,x}\over 2}\|x_{i+1}-x_i\|^2\\
 &= f(x_i,\theta^*) + \nabla_x f(x_i,\theta_i)^T(x_{i+1}-x_i)+ {1\over
	 2\gamma_f}\|x_{i+1}-x_i\|^2 -r_i^T(x_{i+1}-x_i),\label{ineq01}
\end{align}
where $r_i\triangleq\nabla_x f(x_i,\theta_i)-\nabla_x f(x_i,\theta^*)$.
Under the convexity of $f(x;\theta^*)$ in $x$,
\begin{align}
f(x_i,\theta^*) &\leq
f(x^*,\theta^*)+\nabla_x f(x_i,\theta^*)^T(x_i-x^*)
\notag\\&= f(x^*,\theta^*)+\nabla_x
f(x_i,\theta_i)^T(x_i-x^*)-r^T_i(x_i-x^*).\label{ineq02}
\end{align}
By summing up \eqref{ineq01} and \eqref{ineq02}, we obtain
\begin{align}
 f(x_{i+1},\theta^*)
\leq  f(x^*,\theta^*) + \nabla_x f(x_i,\theta_i)^T(x_{i+1}-x^*)+ {1\over
	 2\gamma_f}\|x_{i+1}-x_i\|^2 -r_i^T(x_{i+1}-x^*).\label{ineq1}
\end{align}
Next, we bound the term $\nabla_x f(x_i,\theta_i)^T(x_{i+1}-x^*).$
From the property of the projection on a convex set, denoted by
$\Pi_X(x)$, we have that
\begin{align*}
(x-\Pi_X(x))^T(y-\Pi_X(x))\leq 0,  \quad \forall x\in \R^n, y\in X.
\end{align*}
If we set $x=x_i-\gamma_f \nabla f(x_i,\theta_i)$ and $y=x^*$ {in the
above inequality and by noting that $x_{i+1} = \Pi_X(x)$}, we obtain that
$(x_i-\gamma_f\nabla_x f(x_i,\theta_i)-x_{i+1})^T(x^*-x_{i+1})\leq 0.$
After rearrangement of the terms, the above inequality is equivalent to
\begin{align*}
\nabla_xf(x_i,\theta_i)^T(x_{i+1}-x^*)\leq {1\over \gamma_f}(x_{i+1}-x_i)^T(x^*-x_{i+1}).
\end{align*}
By using this bound in ~\eqref{ineq1}, we get that
\begin{align*}
f(x_{i+1},\theta^*)
\leq  f(x^*,\theta^*) +  {1\over \gamma_f}(x_{i+1}-x_i)^T(x^*-x_{i+1})+ {1\over
	 2\gamma_f}\|x_{i+1}-x_i\|^2 -r_i^T(x_{i+1}-x^*).
\end{align*}
Since $\|x_{i+1}-x_i\|^2+2(x_{i+1}-x_i)^T(x^*-x_{i+1})=\|x_i-x^*\|^2-\|x_{i+1}-x^*\|^2$, the above inequality can be written as
\begin{align*}
f(x_{i+1},\theta^*)
\leq  f(x^*,\theta^*) +  {1\over 2\gamma_f}\|x_{i}-x^*\|^2- {1\over
	 2\gamma_f}\|x_{i+1}-x^*\|^2 -r_i^T(x_{i+1}-x^*).
\end{align*}
Moving $f(x^*,\theta^*)$ to the other side and summing from $i=0$ to
$K-1$, we get the following:
\begin{align*}
\sum_{i=0}^{K-1}\left(f(x_{i+1},x^*)-f(x^*,\theta^*)\right)& \leq
-{1\over 2\gamma_f}\|x_{K}-x^*\|^2+{1\over
	2\gamma_f}\|x_0-x^*\|^2+\sum_{i=0}^{K-1}\|r_i\|\|x_{i+1}-x^*\| \\
 &  \leq
{1\over
	2\gamma_f}\|x_0-x^*\|^2+\sum_{i=0}^{K-1}\|r_i\|\|x_{i+1}-x^*\|,
\end{align*}
where the second inequality follows from the nonnegativity of  ${1\over
	2\gamma_f}\|x_{K}-x^*\|^2$. Dividing both sides by $K$,
\begin{align}
{1\over
	K}\sum_{i=0}^{K-1}\left(f(x_{i+1},x^*)-f(x^*,\theta^*)\right)\leq
{1\over 2\gamma_fK}\|x_0-x^*\|^2+{1\over K}\sum_{i=0}^{K-1}\|r_i\|\|x_{i+1}-x^*\|.\label{con_rate_cnv_bnd}
\end{align}
 By Assumption ~\ref{assump:compact and  lips cont}(a), $\|x_{i+1}-x^*\|\leq C$ for all $i\geq 0$. In addition, by Assumption ~\ref{assump:compact and  lips cont}(b), we have that
 $\|r_i\|=\|\nabla_x f(x,\theta_i)-\nabla_x f(x,\theta^*)\|\leq
 G_{f,\theta}\|\theta_i-\theta^*\|$. Since the function $g$ is strongly
 convex and $\gamma_g\leq {2 \over G_g}$, there exists a $q_g\in(0,1)$
 such that $\|\theta_i-\theta^*\|\leq q_g^i\|\theta_0-\theta^*\|$.
 Therefore, from ~\eqref{con_rate_cnv_bnd}, we obtain the following:
\begin{align*}
{1\over K}\sum_{i=0}^{K-1} (f(x_{i+1},\theta^*)-f(x^*,\theta^*)) & \leq {1\over
	2\gamma_fK}{\|x_0-x^*\|^2} +C G_{f,\theta}\|\theta_0-\theta^*\|\frac{(1-q_g^K)}{K(1-q_g)}.
\end{align*}
By leveraging the convexity of $f(\bullet;\theta^*)$ in $(\bullet)$, we have that
	\begin{align}
	\quad f\left( \xbar_{K},\theta^* \right) -
f(x^*,\theta^*)\leq {1\over
	2\gamma_fK}{\|x_0-x^*\|^2}
	+CG_{f,\theta}\|\theta_0-\theta^*\|\frac{(1-q_g^K)}{K(1-q_g)}.\label{prop:rate ineq}
			\end{align}
But, we may derive a bound on $\left|f\left( \xbar_{k},\theta_K \right)
	- f(x^*,\theta^*)\right|$ as follows:
\begin{align}
\label{prop:absolute ineq}
\left|f\left( \xbar_{K},\theta_K \right) -
f(x^*,\theta^*)\right|  \leq \left| f\left( \xbar_{K},\theta_K \right) -
f\left( \xbar_{K},\theta^* \right)\right|  + \left|f\left(\xbar_{K},\theta^* \right) -
f(x^*,\theta^*)\right|.
\end{align}
{We leverage the Lipschitz continuity of $f(x,\theta)$ in $\theta$
	{uniformly in $x$} with
	constant $L_{f, \theta}$ together with
		~\eqref{prop:rate ineq} and ~\eqref{prop:absolute ineq} to
		complete the proof of (i)}:
\begin{align}
\left| f\left( \xbar_{K},\theta_K \right) -f(x^*,\theta^*)\right| \leq
\frac{a_x}{K} + \underbrace{\|\theta_0-\theta^*\|\left( L_{f,\theta}q_g^K
	+\frac{b_{\theta}}{K}\right)}_{\tiny \mbox{Impact of
		learning}}\label{rate_average}
\end{align}
where $a_x =
\frac{\|x_0-x^*\|^2}{2\gamma_f}$ and
$b_\theta \triangleq \frac{CG_{f,\theta}}{1-q_g},$ . \\
\noindent (ii) Global convergence
follows by taking limits \eqref{rate_average} and by recalling that
$q_g < 1$ to claim that
\begin{align*}
\lim_{k\to \infty}  \left|f\left( \xbar_{k},\theta_k \right) -
f(x^*,\theta^*)\right|= 0.
\end{align*}
\end{proof}

\noindent {\bf Remark:} {Unlike in the case of strongly convex
optimization, there is {\bf no} degradation in the standard rate of
convergence in function values which is ${\cal O}(1/K)$. In particular, the
contribution from learning adds a factor to this rate that is scaled
by $\|\theta_0-\theta^*\|$, the distance of $\theta_0$ from $\theta^*$.  Notably,
this factor has two parts, the first of which is a faster geometric rate
given by $L_{f,\theta}q_g^K$ and the a second part given by
$b_{\theta}/K$. In short, the overall rate
changes by a constant factor. Furthermore, if $\theta_0 = \theta^*$, we
recover the standard rate for convex optimization. However, this scheme
does require knowledge of relevant Lipschitz
constants and we now present diminishing
steplength schemes that do require Lipschitzian properties but do not
require knowing the precise constants.\\

\begin{proposition}[{\bf Diminishing steplength scheme}]
\label{Prop: Conv Diminish Step}
Let Assumptions ~\ref{assump:differentiability},~\ref{assump:f_g_lips_contin_gradient}, \ref{assump:f_and_g_strconv}b, and ~\ref{assump:compact and
		lips cont}  hold. Additionally, let
		$\gamma_{f,k}$ be defined based on Assumption
			~\ref{assump:step_length} and $\gamma_g < 2/G_g$. Let  {the sequence} $\{x_k,\theta_k\}$  be
generated by Algorithm ~\ref{alg1}.  Then, $\{x_k\}$ converges to a point in $X^*$ and $\{\theta_k\}$ converges to {$\theta^*\in \Theta$}.
\end{proposition}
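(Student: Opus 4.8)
The plan is to track the squared distance $u_k \triangleq \|x_k - x^*\|^2$ to a fixed but \emph{arbitrary} minimizer $x^* \in X^*$ and to coerce it into the form required by the Robbins--Siegmund result of Lemma~\ref{polyak2}. First I would invoke the nonexpansivity of the Euclidean projector: writing $x_{k+1} = \Pi_X(x_k - \gamma_{f,k}\nabla_x f(x_k,\theta_k))$ and using $x^* = \Pi_X(x^*)$, one obtains the standard descent inequality
\begin{align*}
\|x_{k+1}-x^*\|^2 \leq \|x_k-x^*\|^2 - 2\gamma_{f,k}\nabla_x f(x_k,\theta_k)^T(x_k-x^*) + \gamma_{f,k}^2\|\nabla_x f(x_k,\theta_k)\|^2.
\end{align*}
Splitting the inexact gradient as $\nabla_x f(x_k,\theta_k) = \nabla_x f(x_k,\theta^*) + r_k$ with $r_k \triangleq \nabla_x f(x_k,\theta_k) - \nabla_x f(x_k,\theta^*)$, and using convexity of $f(\cdot,\theta^*)$ to write $\nabla_x f(x_k,\theta^*)^T(x_k-x^*) \geq f(x_k,\theta^*) - f(x^*,\theta^*) \geq 0$, the cross term produces the nonnegative ``gap'' that will play the role of $\upsilon_k$ in the lemma.

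Next I would collect the remaining terms into an error sequence $\alpha_k$. The perturbation term is bounded by Cauchy--Schwarz and Assumption~\ref{assump:compact and  lips cont}(b) as $|r_k^T(x_k-x^*)| \leq G_{f,\theta}\|\theta_k-\theta^*\|\,\|x_k-x^*\|$, where $\|x_k-x^*\| \leq 2C$ by compactness; the quadratic term $\gamma_{f,k}^2\|\nabla_x f(x_k,\theta_k)\|^2$ is controlled because $\nabla_x f$ is continuous, hence bounded (say by $M$), on the compact set $X\times\Theta$. This yields
\begin{align*}
u_{k+1} \leq u_k - \underbrace{2\gamma_{f,k}\big(f(x_k,\theta^*)-f(x^*,\theta^*)\big)}_{\upsilon_k} + \underbrace{4CG_{f,\theta}\gamma_{f,k}\|\theta_k-\theta^*\| + M^2\gamma_{f,k}^2}_{\alpha_k},
\end{align*}
so that $\beta_k \equiv 0$. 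To verify $\sum_k \alpha_k < \infty$ I would use the square-summability of $\{\gamma_{f,k}\}$ (Assumption~\ref{assump:step_length}) for the second piece, and for the first piece combine the boundedness of $\{\gamma_{f,k}\}$ with the geometric decay $\|\theta_k-\theta^*\| \leq q_g^k\|\theta_0-\theta^*\|$ guaranteed by the strongly convex learning recursion under $\gamma_g < 2/G_g$ (cf.~\eqref{geom_cr}); this simultaneously settles the claim $\theta_k \to \theta^*$.

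Lemma~\ref{polyak2} then yields two conclusions at once: $u_k = \|x_k-x^*\|^2$ converges to a limit $\bar u \geq 0$, and $\sum_k \gamma_{f,k}\big(f(x_k,\theta^*)-f(x^*,\theta^*)\big) < \infty$. Since $\sum_k \gamma_{f,k} = \infty$ while the gap terms are nonnegative, the latter forces $\liminf_k \big(f(x_k,\theta^*)-f(x^*,\theta^*)\big) = 0$, so some subsequence $\{x_{k_j}\}$, bounded in the compact $X$, converges to a point $\tilde x$ with $f(\tilde x,\theta^*) = f(x^*,\theta^*)$, i.e.\ $\tilde x \in X^*$. The main obstacle, and the step I would treat most carefully, is upgrading this subsequential statement to convergence of the \emph{whole} sequence. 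The key is that the distance recursion above holds for \emph{every} minimizer, in particular for $x^* = \tilde x$; hence $\|x_k - \tilde x\|^2$ converges, and along the subsequence it tends to $0$, forcing the full limit $\bar u = 0$ and therefore $x_k \to \tilde x \in X^*$. This is a routine Fej\'er-monotonicity argument, and the only subtlety is ensuring the constants ($C$, $M$, $q_g$) are uniform over $X^*$ so the recursion can legitimately be instantiated at a limit point identified only after the fact.
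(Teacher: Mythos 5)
Your proposal is correct and follows essentially the same route as the paper's proof: the projected-gradient descent inequality, the split $\nabla_x f(x_k,\theta_k)=\nabla_x f(x_k,\theta^*)+r_k$, the convexity gap as $\upsilon_k$, Lemma~\ref{polyak2}, the $\liminf$ argument, and the Fej\'er step. The only (harmless) differences are that you bound the cross term by Cauchy--Schwarz and compactness, giving $\beta_k\equiv 0$, where the paper uses $2\gamma_{f,k}r_k^T(x_k-x^*)\leq\|r_k\|^2+\gamma_{f,k}^2\|x_k-x^*\|^2$ and hence $\beta_k=\gamma_{f,k}^2$, and that you spell out more explicitly than the paper does the re-instantiation of the recursion at the subsequential limit point to upgrade to convergence of the whole sequence.
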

\begin{proof}
By the nonexpansivity property of the Euclidean projection operator,
   for all $k>0$, $\|x_{k+1}-x^*\|^2$ can be bounded as follows:
\begin{align}
\|x_{k+1}-x^*\|^2& =\| \Pi_X ( x_k - \gamma_{f,k}
		\nabla_xf(x_k,\theta_k))-{\Pi_X(x^*)}\|^2\notag\\
&\leq \| (x_k-x^*) - \gamma_{f,k}\nabla_xf(x_k,\theta_k)\|^2\notag \\
                               &=\|x_k-x^*\|^2-2\gamma_{f,k}\nabla_xf(x_k,\theta_k)^T(x_k-x^*) +\gamma_{f,k}^2\|\nabla_xf(x_k,\theta_k)\|^2\notag \\
                              & =
\|x_k-x^*\|^2-2\gamma_{f,k}\nabla_xf(x_k,\theta^*)^T(x_k-x^*)
-2\gamma_{f,k}r_{k}^T(x_k-x^*)\notag \\
	& +\gamma_{f,k}^2\|\nabla_xf(x_k,\theta_k)\|^2,  \label{bd1}
\end{align}
where $r_k\triangleq\nabla_xf(x_k,\theta_k)-\nabla_xf(x_k,\theta^*)$. By leveraging convexity and the gradient inequality, we have that
$f(x^*,\theta^*) \geq
f(x_k,\theta^*)+\nabla_xf(x_k,\theta^*)^T(x^*-x_k),$
implying that
\begin{align}
   -\nabla_xf(x_k,\theta^*)^T(x_k-x^*)\leq
   -(f(x_k,\theta^*)-f(x^*,\theta^*)). \label{bd-rk}
\end{align}
By substituting \eqref{bd-rk} in \eqref{bd1} and by noting that
$2\gamma_{f,k}r_{k}^T(x_k-x^*) \leq \|r_k\|^2 + \gamma^2_{f,k} \|x_k -
x^*\|^2$,  we have the following bound:
\begin{align}
\|x_{k+1}-x^*\|^2&\leq
\|x_k-x^*\|^2-2\gamma_{f,k}(f(x_k,\theta^*)-f(x^*,\theta^*))-2\gamma_{f,k}r_{k}^T(x_k-x^*)
+\gamma_{f,k}^2\|\nabla_xf(x_k,\theta_k)\|^2 \notag\\
&\leq\|x_k-x^*\|^2-2\gamma_{f,k}(f(x_k,\theta^*)-f(x^*,\theta^*))+\|r_k\|^2\notag\\&+
\gamma^2_{f,k}\|x_k-x^*\|^2
+\gamma_{f,k}^2\|\nabla_xf(x_k,\theta_k)\|^2. \label{con_conv_bnd}
\end{align}
By Assumption ~\ref{assump:compact and  lips cont}, we have that $\|r_k\|^2\leq \|\nabla_xf(x_k,\theta_k)-\nabla_xf(x_k,\theta^*)\|^2\leq G^2_{f,\theta}\|\theta_k-\theta^*\|^2$.
In addition, under strong convexity of $g$ and choosing $\gamma_g <
2/G_g$, we have that $\|\theta_k-\theta^*\|^2\leq
q^{2k}_{g}\|\theta_0-\theta^*\|^2$, where $q_g\in (0,1)$. {Consequently
	$\theta_k \to \theta^*$ as $k \to \infty$. Furthermore}, $\eqref{con_conv_bnd}$ can be further simplified as below:
\begin{align}
\|x_{k+1}-x^*\|^2&\leq
(1+
 \gamma^2_{f,k})\|x_k-x^*\|^2-2\gamma_{f,k}(f(x_k,\theta^*)-f(x^*,\theta^*))\notag\\&\underbrace{+G_{f,\theta}^2q_g^{2k}
\|\theta_0-\theta^*\|^2
	+\gamma_{f,k}^2\|\nabla_xf(x_k,\theta_k)\|^2}_{\triangleq \alpha_k}.\label{eq:summable_alpha}
\end{align}
The requirements of Lemma~\ref{polyak2} hold since
$(f(x_k,\theta^*)-f(x^*,\theta^*)) \geq 0$ since $x^* \in \argmin_{x \in
X} f(x;\theta^*).$ Consequently, by leveraging Lemma~\ref{polyak2}, we observe that $\sum_{k=1}^{\infty}
\alpha_k < \infty$ since $$\sum_{k}{ G_{f,\theta}^2}q_g^{2k} \|\theta_0-\theta^*\|^2 \leq
\frac{{ G_{f,\theta}^2}\|\theta_0-\theta^*\|^2}{1-q_g^{2}}$$ and  $\sum_{k}
\gamma_{f,k}^2\|\nabla_xf(x_k,\theta_k)\|{^2}   <    \infty$, since
$\sum_{k}
\gamma_{f,k}^2 < \infty$ and
$\|\nabla_xf(x_k,\theta_k)\|$ is bounded, { a consequence of the
	compactness of $X$ and $\Theta$ and the continuity of the gradient
		map}. We may therefore conclude that $\|x_{k}-x^*\|^2 \to \bar v \geq 0$
and $\sum_{k} \gamma_{f,k} (f(x_k,\theta^*)-f(x^*,\theta^*)) < \infty.$
It suffices to show that $\bar v \equiv 0$.\\

Since $\sum_{k} \gamma_{f,k} (f(x_k,\theta^*)-f(x^*,\theta^*)) <
\infty$ and $\sum_{k} \gamma_{f,k} = \infty$, it follows that  $\liminf_{k\to\infty}f(x_k,\theta^*)
	=f(x^*,\theta^*)$.  Since the set $X$ is closed, all accumulation
	points of $\{x_k\}$ lie in $X$. Furthermore, since
	$f(x_k,\theta^*)\to f(x^*,\theta^*)$ along a subsequence,  it follows that $\{x_k\}$ has a subsequence
	converging to some point in $X^*$. Moreover, since $\|x_{k}-x^*\|$
	is convergent, then the entire sequence  $\{x_k\}$
	converges to a point in $X^*$.
\end{proof}
\subsubsection{Convex optimization with convex learning} \label{subsec: conv opt conv learn}

{A key restriction in the prior subsection is the need for imposing a
	strong convexity assumption on the learning problem. The need for
		this assumption arises from noting that we require utilizing a
		rate estimate in solution iterates in the learning space, rather
		than merely function iterates. In this
		subsection, we consider a convex learning problem but impose a
		weak sharpness requirement~\cite{Polyak87} which is defined next. Note that an alternative approach is
		pursued in the next section in a more general variational
		regime.}

\begin{definition}[{\bf Weak sharpness}]
The solution set $\Theta^*$ is said to be weak sharp if there
{exists} a positive number $\alpha$ such that $g(\theta)-g(\theta^*)
	\geq \alpha \mbox{dist}(\theta,\Theta^*), \quad  \forall \theta^* \in \Theta^*,$ where $\mbox{dist}(\theta,\Theta^*):=\min_{{\theta^*}\in \Theta^*} \|\theta-\theta^*\|$ and $\alpha$ is called modulus of sharpness.
\end{definition}

Under a weak sharpness requirement on the solution set, the solution to
the learning problem can be obtained in a finite number of iterations.
The proof of this Lemma may be found in~\cite{Polyak87}.

\begin{lemma}[{\em Finite convergence under constant steplength}]
\label{Lemma: Weak_sharp_finite}
Consider a convex differentiable learning  problem ${\cal L}$ in which the solution set
$\Theta^*$ is nonempty and satisfies a weak sharpness property.
Furthermore, $\nabla_{\theta} g$ is assumed to be Lipschitz continuous
with a constant $G_{g}$. Then, the sequence $\{\theta_k\}$ generated by
a projected gradient scheme with stepsize $\gamma_{g}< {2\over G_g}$
converges to $\theta^*$ in a finite number of iterations, where
$\theta^*\in \Theta^*$.
\end{lemma}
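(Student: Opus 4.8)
The plan is to turn weak sharpness into a uniform lower bound on the per-iteration decrease of $g$, so that the descent nature of the projected gradient scheme forces $\theta_k$ into $\Theta^*$ after finitely many steps. Since every solution of $(\mathcal L)$ is a fixed point of the projected-gradient map $\theta \mapsto \Pi_\Theta(\theta - \gamma_g \nabla_\theta g(\theta))$, once an iterate lands in $\Theta^*$ the sequence stays there, so finite termination in this sense is exactly the desired conclusion.

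First I would establish sufficient decrease. The descent lemma, a consequence of the $G_g$-Lipschitz continuity of $\nabla_\theta g$, gives $g(\theta_{k+1}) \le g(\theta_k) + \nabla_\theta g(\theta_k)^T(\theta_{k+1}-\theta_k) + \tfrac{G_g}{2}\|\theta_{k+1}-\theta_k\|^2$. The variational inequality characterizing $\theta_{k+1} = \Pi_\Theta(\theta_k - \gamma_g \nabla_\theta g(\theta_k))$, evaluated at the feasible point $\theta_k$, yields $\nabla_\theta g(\theta_k)^T(\theta_{k+1}-\theta_k) \le -\tfrac{1}{\gamma_g}\|\theta_{k+1}-\theta_k\|^2$. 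Combining the two gives $g(\theta_{k+1}) \le g(\theta_k) - (\tfrac{1}{\gamma_g}-\tfrac{G_g}{2})\|\theta_{k+1}-\theta_k\|^2$, and the hypothesis $\gamma_g < 2/G_g$ makes the coefficient strictly positive. Hence $\{g(\theta_k)\}$ is nonincreasing, bounded below by $g(\theta^*)$, and convergent, while the steps $\|\theta_{k+1}-\theta_k\|$ are square-summable.

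Next I would extract the lower bound from weak sharpness. Writing $\bar\theta_k \triangleq \Pi_{\Theta^*}(\theta_k) \in \Theta^* \subseteq \Theta$, convexity gives $\nabla_\theta g(\theta_k)^T(\theta_k - \bar\theta_k) \ge g(\theta_k) - g(\theta^*)$, while weak sharpness gives $g(\theta_k)-g(\theta^*) \ge \alpha\,\dist(\theta_k,\Theta^*)$. Since $\|\theta_k - \bar\theta_k\| = \dist(\theta_k,\Theta^*)$, the Cauchy-Schwarz inequality then forces $\|\nabla_\theta g(\theta_k)\| \ge \alpha$ whenever $\theta_k \notin \Theta^*$. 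In the unconstrained special case $\Theta = \mathbb R^m$ the step is $\theta_{k+1}-\theta_k = -\gamma_g\nabla_\theta g(\theta_k)$, so the decrease estimate reads $g(\theta_k) - g(\theta_{k+1}) \ge \gamma_g(1-\tfrac{\gamma_g G_g}{2})\|\nabla_\theta g(\theta_k)\|^2 \ge \gamma_g(1-\tfrac{\gamma_g G_g}{2})\alpha^2 > 0$: a fixed decrease at every non-terminal step. As $g$ is bounded below by $g(\theta^*)$, only finitely many such steps can occur, which is precisely finite termination.

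The hard part will be upgrading this argument to the genuinely constrained setting, where the relevant quantity is the gradient mapping $\tfrac{1}{\gamma_g}(\theta_k-\theta_{k+1})$ rather than $\nabla_\theta g(\theta_k)$. At a constrained minimizer $\nabla_\theta g$ need not vanish even though the gradient mapping does, so the clean bound $\|\nabla_\theta g(\theta_k)\|\ge\alpha$ no longer transfers to a uniform lower bound on $\|\theta_k-\theta_{k+1}\|$, and the per-step decrease can degrade to zero precisely as $\theta_k$ approaches $\Theta^*$. Indeed, the distance recursion one obtains, $\dist(\theta_{k+1},\Theta^*)^2 \le \dist(\theta_k,\Theta^*)^2 - 2\gamma_g(g(\theta_{k+1})-g(\theta^*))$, establishes monotone contraction but, on its own, only yields asymptotic convergence, not finiteness. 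The resolution I would aim for is a dichotomy at each iteration: either $\|\tfrac{1}{\gamma_g}(\theta_k-\theta_{k+1})\| \ge \alpha$, giving the same fixed decrease as above, or the projected step overshoots and lands directly in $\Theta^*$. Establishing this dichotomy rigorously---equivalently, showing that the active face of $\Theta$ at the limit is identified after finitely many iterations---is where I expect the bulk of the work to lie, and is the step for which I would lean on the cited result of Polyak.
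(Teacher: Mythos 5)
There is a genuine gap, and you have correctly located it yourself: everything hinges on the constrained case, which you do not prove. Two points sharpen this. First, the ``unconstrained special case'' you do complete is vacuous under the hypotheses of the lemma: your own estimate $\|\nabla_\theta g(\theta)\|\geq\alpha$ for all $\theta\notin\Theta^*$, combined with continuity of $\nabla_\theta g$ and the fact that $\nabla_\theta g\equiv 0$ on $\Theta^*$ when $\Theta=\Real^m$, forces $\Theta^*$ to have no boundary points that are limits of points outside it, i.e.\ $\Theta^*=\Real^m$. So for a differentiable $g$ the lemma has content only when the sharp growth is created by the constraint set, which is exactly the case you leave open. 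Second, the ``dichotomy'' you propose --- either $\gamma_g^{-1}\|\theta_k-\theta_{k+1}\|\geq\alpha$ or the projected step lands in $\Theta^*$ --- is essentially a restatement of the conclusion and is not established; and, as you suspect, the distance recursion cannot substitute for it: combining $\dist(\theta_{k+1},\Theta^*)^2+2\gamma_g\alpha\,\dist(\theta_{k+1},\Theta^*)\leq\dist(\theta_k,\Theta^*)^2$ (which is what the recursion plus weak sharpness gives) yields only $\dist(\theta_{k+1},\Theta^*)\leq\dist(\theta_k,\Theta^*)^2/(2\gamma_g\alpha)$, i.e.\ quadratic decay of the distance, which is compatible with the iterates never reaching $\Theta^*$.

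For context, the paper does not prove this lemma either; it cites Polyak's book, so you end up deferring to the same source. The standard argument (Polyak, and Burke--Ferris) runs differently from your dichotomy: one first shows asymptotic convergence $\theta_k\to\bar\theta\in\Theta^*$ together with $\|\theta_{k+1}-\theta_k\|\to 0$ (your sufficient-decrease step already gives the latter), and then invokes the geometric characterization of weak sharpness --- that $-\nabla_\theta g(\bar\theta)$ lies in the \emph{interior} of the polar of $T_\Theta(\bar\theta)\cap N_{\Theta^*}(\bar\theta)$, uniformly over $\Theta^*$ --- to argue that once $\theta_k$ is close to $\bar\theta$ and the residual $\gamma_g^{-1}(\theta_k-\theta_{k+1})$ is small, the variational characterization of the projection step, $\gamma_g^{-1}(\theta_k-\theta_{k+1})-\nabla_\theta g(\theta_k)\in N_\Theta(\theta_{k+1})$, can only be satisfied by $\theta_{k+1}\in\Theta^*$. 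That ``small residual forces membership'' mechanism is the missing idea; your fixed-decrease argument cannot supply it because, as you note, the per-step decrease degrades to zero as the iterates approach a constrained minimizer.
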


{
{We now consider a constant steplength scheme where $\gamma_{f,k}$ and $\gamma_{g,k}$ are
	sufficiently small constants.}

\begin{proposition}[{\bf Constant steplength scheme}]
Let Assumptions ~\ref{assump:differentiability},~\ref{assump:f_g_lips_contin_gradient}, and ~\ref{assump:compact and
		lips cont}  hold. In addition, suppose that  $\Theta^*$ {satisfies a weak sharpness requirement} and
the stepsize sequences
$\{\gamma_{f,k}\}$
and $\{\gamma_{g,k}\}$ are fixed at some positive constants $\gamma_f$ and $\gamma_g$, respectively, where
$0<\gamma_f< 2/G_{f,x}$ and $0<\gamma_{g}< 2/G_g$. Let $\{x_k,\theta_k\}$ be the sequence generated by Algorithm ~\ref{alg1}. Then, $\{x_k\}$ converges to a point in $X^*$ and  $\{\theta_k\}$ converges to a point { in $\Theta^*$} as $k\to\infty$.
\end{proposition}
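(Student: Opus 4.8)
The plan is to exploit the finite-termination property of the learning iterates under weak sharpness and thereby reduce the analysis to that of an \emph{exact} projected gradient method on the convex problem $({\cal C}(\theta^*))$. First I would invoke Lemma~\ref{Lemma: Weak_sharp_finite}: since $g$ is convex and continuously differentiable (Assumption~\ref{assump:differentiability}) with $\nabla_\theta g$ Lipschitz (Assumption~\ref{assump:f_g_lips_contin_gradient}), $\Theta^*$ is weakly sharp, and $\gamma_g < 2/G_g$, the learning sequence $\{\theta_k\}$ reaches some $\theta^*\in\Theta^*$ in finitely many iterations. Hence there is an index $\bar k$ with $\theta_k = \theta^*$, and therefore $r_k \triangleq \nabla_x f(x_k,\theta_k) - \nabla_x f(x_k,\theta^*) = 0$, for all $k \ge \bar k$. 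For $k \ge \bar k$ the $x$-update in Algorithm~\ref{alg1} becomes exactly $x_{k+1} = \Pi_X(x_k - \gamma_f \nabla_x f(x_k,\theta^*))$, i.e.\ the exact projected gradient iteration for minimizing the convex, $C^1$ function $f(\cdot,\theta^*)$ with $G_{f,x}$-Lipschitz gradient over the compact convex set $X$, whose solution set $X^*$ is nonempty.

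It then remains to show that this exact scheme, run with a \emph{constant} step $\gamma_f \in (0, 2/G_{f,x})$, produces iterates converging to a point of $X^*$, which I would establish through the classical two-part argument. For the sufficient decrease, combining the descent inequality already derived in the proof of Proposition~\ref{theo_conv_convex} with the projection inequality evaluated at $y = x_k$ yields, for $k \ge \bar k$, $f(x_{k+1},\theta^*) \le f(x_k,\theta^*) - (\tfrac{1}{\gamma_f} - \tfrac{G_{f,x}}{2})\|x_{k+1}-x_k\|^2$, where the coefficient is strictly positive precisely because $\gamma_f < 2/G_{f,x}$. Since $\{f(x_k,\theta^*)\}$ is bounded below by $f(x^*,\theta^*)$, telescoping gives $\sum_{k\ge \bar k}\|x_{k+1}-x_k\|^2 < \infty$, so $\|x_{k+1}-x_k\| \to 0$. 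For Fej\'er monotonicity, since $\nabla_x f(\cdot,\theta^*)$ is $G_{f,x}$-Lipschitz and $f(\cdot,\theta^*)$ is convex, the map $I - \gamma_f \nabla_x f(\cdot,\theta^*)$ is nonexpansive for $\gamma_f \le 2/G_{f,x}$, and any $x^* \in X^*$ is a fixed point of $\Pi_X(I - \gamma_f\nabla_x f(\cdot,\theta^*))$; hence $\|x_{k+1}-x^*\| \le \|x_k - x^*\|$ for every $x^* \in X^*$ and $k \ge \bar k$, so $\{\|x_k - x^*\|\}$ converges and $\{x_k\}$ is bounded.

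To conclude, I would pass to accumulation points exactly as at the end of the proof of Proposition~\ref{Prop: Conv Diminish Step}: by compactness of $X$, $\{x_k\}$ has an accumulation point $\tilde x \in X$; since the gradient-mapping increment $\tfrac{1}{\gamma_f}(x_k - x_{k+1}) \to 0$ and the projected-gradient map is continuous, $\tilde x$ satisfies $\tilde x = \Pi_X(\tilde x - \gamma_f \nabla_x f(\tilde x,\theta^*))$, i.e.\ $\tilde x \in X^*$; and by Fej\'er monotonicity $\|x_k - \tilde x\|$ converges, forcing the whole sequence to converge to $\tilde x \in X^*$. Together with $\theta_k \to \theta^* \in \Theta^*$ this yields the claim. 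The main obstacle is this last convergence step: unlike the averaged scheme of Proposition~\ref{theo_conv_convex} (which only controls $\bar{x}_K$ and assumes $\gamma_f \le 1/G_{f,x}$) and unlike the diminishing-step scheme of Proposition~\ref{Prop: Conv Diminish Step} (whose telescoping relied on $\sum_k \gamma_{f,k}^2 < \infty$), here the step is a fixed constant and $f$ is only convex, so neither prior route delivers convergence of the actual iterates; in particular, the Young's-inequality bound on $r_k^T(x_k-x^*)$ used to produce~\eqref{eq:summable_alpha} would inject a non-summable $\gamma_f^2\|x_k-x^*\|^2$ term. Exploiting $r_k \equiv 0$ beyond $\bar k$ is exactly what removes this term and lets the exact projected-gradient analysis, valid on the full range $\gamma_f < 2/G_{f,x}$, go through.
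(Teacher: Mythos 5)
Your proposal takes essentially the same route as the paper's proof: invoke Lemma~\ref{Lemma: Weak_sharp_finite} to conclude that the learning iterates reach some $\theta^*\in\Theta^*$ in finitely many steps, after which the $x$-update is the exact constant-steplength projected gradient method for the convex problem $f(\cdot,\theta^*)$ with $\gamma_f<2/G_{f,x}$, whose convergence is classical. The only difference is that you write out that classical argument in full (sufficient decrease, Fej\'er monotonicity via cocoercivity, and the accumulation-point step), whereas the paper simply cites Polyak for it; your version is correct and self-contained.
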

\begin{proof}
Based on Lemma ~\ref{Lemma: Weak_sharp_finite}, there exist a finite
$K>0$ such that for all $k>K$, we have that $\theta_k=\theta^*\in
\Theta^*$. Hence, for all $k>K$, Algorithm ~\ref{alg1}, becomes standard
projected gradient scheme without learning and thus under Lipschitzian
property of gradient of function $f$ and by choosing $0<\gamma_f<
2/G_{f,x}$, the sequence $\{x_k\}$ converges to $x^*\in X^*$. For the
proof of convergence of gradient projected scheme, the reader can refer
to~\cite{Polyak87} .
\end{proof}}

{Next, we consider a diminishing steplength sequence for the {optimization} and learning
problems and provide an intermediate result on the summability of the
sequence $\{\gamma_{g,k} \mbox{dist}(\theta_k, \Theta^*)\}.$}
\begin{lemma}\label{lemma:rate_ws}
Consider a convex differentiable learning problem ${\cal L}$ in which the solution set $\Theta^*$ is nonempty and satisfies a weak sharpness property.
In addition, suppose that $\Theta$ is bounded and the sequence $\gamma_{g,k}$ be defined based on Assumption
			~\ref{assump:step_length}. Then, for the sequence $\{\theta_k\}$ generated by Algorithm~\ref{alg1}, we have that
{$ \sum_{k=1}^\infty  \gamma_{g,k} \mbox{dist}(\theta_k,
		 \Theta^*)<\infty.$}
\end{lemma}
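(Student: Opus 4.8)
The plan is to exploit the weak sharpness property together with a standard supermartingale-type recursion for the learning iterates $\{\theta_k\}$. First I would write down the one-step bound for $\mathrm{dist}(\theta_{k+1},\Theta^*)^2$ coming from the projected gradient step (Learn). By nonexpansivity of the projection onto $\Theta$ and the fact that any $\theta^*\in\Theta^*$ is a fixed point of that step, I expect a recursion of the form
\begin{align*}
\mathrm{dist}(\theta_{k+1},\Theta^*)^2
\;\leq\; \mathrm{dist}(\theta_k,\Theta^*)^2
- 2\gamma_{g,k}\,\nabla_\theta g(\theta_k)^T(\theta_k-\Pi_{\Theta^*}(\theta_k))
+ \gamma_{g,k}^2\,\|\nabla_\theta g(\theta_k)\|^2,
\end{align*}
where $\Pi_{\Theta^*}(\theta_k)$ is the projection of $\theta_k$ onto the solution set. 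The key is to control the cross term from below.

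Next I would invoke convexity of $g$ together with the weak sharpness definition. Convexity gives $\nabla_\theta g(\theta_k)^T(\theta_k-\Pi_{\Theta^*}(\theta_k)) \geq g(\theta_k)-g(\theta^*)$, and weak sharpness supplies $g(\theta_k)-g(\theta^*) \geq \alpha\,\mathrm{dist}(\theta_k,\Theta^*)$. Combining these yields
\begin{align*}
\mathrm{dist}(\theta_{k+1},\Theta^*)^2
\;\leq\; \mathrm{dist}(\theta_k,\Theta^*)^2
- 2\alpha\,\gamma_{g,k}\,\mathrm{dist}(\theta_k,\Theta^*)
+ \gamma_{g,k}^2\,\|\nabla_\theta g(\theta_k)\|^2.
\end{align*}
Since $\Theta$ is bounded and $\nabla_\theta g$ is continuous (Assumption~\ref{assump:f_g_lips_contin_gradient} gives Lipschitz continuity, hence boundedness of the gradient over $\Theta$), there is a constant $M$ with $\|\nabla_\theta g(\theta_k)\|^2\leq M$, so the last term is bounded by $M\gamma_{g,k}^2$, which is summable by Assumption~\ref{assump:step_length}.

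This puts the recursion squarely in the setting of Lemma~\ref{polyak2}: I would set $u_k=\mathrm{dist}(\theta_k,\Theta^*)^2$, $\beta_k=0$, $\upsilon_k=2\alpha\,\gamma_{g,k}\,\mathrm{dist}(\theta_k,\Theta^*)$, and $\alpha_k=M\gamma_{g,k}^2$. Since $\sum_k\alpha_k<\infty$ and $\sum_k\beta_k<\infty$, Lemma~\ref{polyak2} guarantees that $\sum_k \upsilon_k<\infty$, i.e. $\sum_{k=1}^\infty \gamma_{g,k}\,\mathrm{dist}(\theta_k,\Theta^*)<\infty$, which is exactly the claim after absorbing the constant $2\alpha$. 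The main obstacle I anticipate is the lower bound on the cross term: I must be careful to apply convexity and weak sharpness with respect to the \emph{nearest} solution point $\Pi_{\Theta^*}(\theta_k)$ rather than an arbitrary $\theta^*$, so that the telescoping distance bound and the sharpness inequality refer to the same reference point; everything else is a routine application of the cited lemma.
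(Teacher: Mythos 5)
Your proposal is correct and follows essentially the same route as the paper: a one-step projected-gradient bound, convexity plus weak sharpness to lower-bound the cross term by $2\alpha\gamma_{g,k}\,\mathrm{dist}(\theta_k,\Theta^*)$, bounded gradients over the compact set $\Theta$ to make the error term summable, and Lemma~\ref{polyak2} to conclude. The only cosmetic difference is that you track $\mathrm{dist}(\theta_k,\Theta^*)^2$ via $\Pi_{\Theta^*}(\theta_k)$ whereas the paper fixes a single $\theta^*\in\Theta^*$ and tracks $\|\theta_k-\theta^*\|^2$; both work, and your worry about needing the nearest point is actually unnecessary since $g(\theta^*)$ is constant over $\Theta^*$ and the weak-sharpness inequality holds for any fixed minimizer.
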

\begin{proof}
Under boundedness of gradient of function $g$ and by using diminishing step length
\begin{align*}
\|\theta_{k+1}-\theta^*\|^2\leq
\|\theta_k-\theta^*\|^2-2\gamma_{g,k}(g(\theta_k)-g(\theta^*))+\gamma_{g,k}^2
\|\nabla_{\theta} g(\theta)\|^2.
\end{align*}
Under the weak sharp property of $\Theta^*$, we have that $g(\theta_k)-
g(\theta^*) \geq \alpha \mbox{dist}(\theta_k,\Theta^*)$. By {substituting
	this expression} into the above inequality, we obtain
$$\|\theta_{k+1}-\theta^*\|^2 \leq \|\theta_{k}-\theta^*\|^2-2\alpha \gamma_{g,k}\mbox{dist}(\theta_k,\Theta^*)+\gamma_{g,k}^2 C^2,$$
where $C:=\sup_{\theta \in \Theta} \|\nabla g(\theta)\|$.
Since $\sum_{k=1}^\infty \gamma_{g,k}^2 C^2 < \infty$, then by using Lemma ~\ref{polyak2}, we conclude that
$\sum_{k=1}^\infty  \gamma_{g,k}\mbox{dist}(\theta_k,\Theta^*) <\infty.$
\end{proof}

We now impose a Lipschitzian requirement on the gradient map $\nabla_x
f(x;\theta)$  in $\theta$ uniformly in $x$.

\begin{assumption} \label{assump:weak_sharp_lipsch} There is a constant $M_{f,\theta}$ such that for
$\|\nabla_x f(x,\theta)-\nabla_x f(x,\theta^*)\|\leq
M_{f,\theta}\mbox{dist}(\theta,\Theta^*)$ for all $\theta \in \Theta$,
	$\theta^*\in \Theta^*$ and $x\in X$.
\end{assumption}
\begin{theorem}[{\bf Diminishing steplength scheme}]
Let Assumptions ~\ref{assump:differentiability},~\ref{assump:f_g_lips_contin_gradient},~\ref{assump:compact and
		lips cont}, and ~\ref{assump:weak_sharp_lipsch} hold and
		$\Theta^*$ is weak sharp. Let $\{x_k,\theta_k\}$ be the sequence
		generated by Algorithm ~\ref{alg1}.{ Additionally, let
		$\gamma_{g,k}$ be defined based on Assumption
			~\ref{assump:step_length} and $\gamma_{f,k}=\gamma_{g,k}$ for all $k>0$}.
Then, $\{x_k\}$ converges to a point in $X^*$ and $\{\theta_k\}$
converges to a point in $\Theta^*$ as $k \to \infty$.
\end{theorem}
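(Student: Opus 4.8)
The plan is to mirror the diminishing-steplength argument of Proposition~\ref{Prop: Conv Diminish Step}, again driving everything through the Robbins--Siegmund-type recursion of Lemma~\ref{polyak2}, but with the crucial modification that the gradient-error term is now controlled through the weak-sharpness machinery of Lemma~\ref{lemma:rate_ws} rather than through geometric decay of $\|\theta_k-\theta^*\|$ (which is unavailable once strong convexity of $g$ is dropped).

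First I would settle the convergence of $\{\theta_k\}$. From the inequality established inside the proof of Lemma~\ref{lemma:rate_ws}, namely $\|\theta_{k+1}-\theta^*\|^2 \leq \|\theta_k-\theta^*\|^2 - 2\alpha\gamma_{g,k}\dist(\theta_k,\Theta^*)+\gamma_{g,k}^2 C^2$ for every $\theta^*\in\Theta^*$, Lemma~\ref{polyak2} gives that $\|\theta_k-\theta^*\|$ converges and that $\sum_k \gamma_{g,k}\dist(\theta_k,\Theta^*)<\infty$. Since $\sum_k\gamma_{g,k}=\infty$, this forces $\liminf_k \dist(\theta_k,\Theta^*)=0$; extracting a subsequence $\theta_{k_j}\to\bar\theta\in\Theta^*$ by compactness of $\Theta$ and closedness of $\Theta^*$, and using the convergence of $\|\theta_k-\bar\theta\|$, I obtain $\theta_k\to\bar\theta\in\Theta^*$. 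I would henceforth fix $\theta^*=\bar\theta$ and $X^*=\argmin_{x\in X} f(x,\theta^*)$.

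For the $x$-update, following the expansion in~\eqref{bd1}, nonexpansivity of $\Pi_X$ yields
$$\|x_{k+1}-x^*\|^2 \leq \|x_k-x^*\|^2 - 2\gamma_{f,k}\nabla_x f(x_k,\theta^*)^T(x_k-x^*) - 2\gamma_{f,k} r_k^T(x_k-x^*) + \gamma_{f,k}^2\|\nabla_x f(x_k,\theta_k)\|^2,$$
with $r_k=\nabla_x f(x_k,\theta_k)-\nabla_x f(x_k,\theta^*)$. Convexity of $f(\cdot,\theta^*)$ bounds the first inner-product term by $-2\gamma_{f,k}(f(x_k,\theta^*)-f(x^*,\theta^*))\le 0$. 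The decisive step is the treatment of the error term: rather than the Young's-inequality splitting used in Proposition~\ref{Prop: Conv Diminish Step} (which would demand $\sum_k\dist(\theta_k,\Theta^*)^2<\infty$, not available here), I would retain the factor $\gamma_{f,k}$ and bound, using Cauchy--Schwarz, compactness of $X$ (so $\|x_k-x^*\|\le D$ with $D=\mathrm{diam}(X)$), and Assumption~\ref{assump:weak_sharp_lipsch},
$$-2\gamma_{f,k}r_k^T(x_k-x^*) \le 2D\,\gamma_{f,k}\|r_k\| \le 2D M_{f,\theta}\,\gamma_{f,k}\dist(\theta_k,\Theta^*).$$
Invoking the hypothesis $\gamma_{f,k}=\gamma_{g,k}$ turns this into $2DM_{f,\theta}\,\gamma_{g,k}\dist(\theta_k,\Theta^*)$, which is summable by Lemma~\ref{lemma:rate_ws}.

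Collecting terms yields $u_{k+1}\le u_k-\upsilon_k+\alpha_k$ with $u_k=\|x_k-x^*\|^2$, $\upsilon_k=2\gamma_{f,k}(f(x_k,\theta^*)-f(x^*,\theta^*))\ge 0$, and $\alpha_k=2DM_{f,\theta}\gamma_{g,k}\dist(\theta_k,\Theta^*)+\gamma_{f,k}^2\|\nabla_x f(x_k,\theta_k)\|^2$, where $\sum_k\alpha_k<\infty$ because the first sum is finite by Lemma~\ref{lemma:rate_ws} and the second by square-summability of $\{\gamma_{f,k}\}$ together with boundedness of the gradient map on the compact set $X\times\Theta$. Lemma~\ref{polyak2} then gives that $\|x_k-x^*\|$ converges for every $x^*\in X^*$ and that $\sum_k\gamma_{f,k}(f(x_k,\theta^*)-f(x^*,\theta^*))<\infty$; since $\sum_k\gamma_{f,k}=\infty$, this produces $\liminf_k f(x_k,\theta^*)=f(x^*,\theta^*)$. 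A subsequence of $\{x_k\}$, convergent by compactness to some $\bar x\in X$, then satisfies $f(\bar x,\theta^*)=f(x^*,\theta^*)$ by continuity, so $\bar x\in X^*$; taking $x^*=\bar x$ in the established convergence of $\|x_k-\bar x\|$ forces $x_k\to\bar x\in X^*$. The principal obstacle, and precisely the reason the hypothesis $\gamma_{f,k}=\gamma_{g,k}$ is imposed, is the summability of the gradient-error contribution: absent geometric decay of $\theta_k$, the only available handle on $\dist(\theta_k,\Theta^*)$ is the $\gamma_{g,k}$-weighted sum of Lemma~\ref{lemma:rate_ws}, which aligns with the error term appearing in the $x$-recursion only once the two steplength sequences are synchronized.
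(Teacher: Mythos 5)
Your proposal is correct and follows essentially the same route as the paper's proof: the same projection expansion, the same Cauchy--Schwarz treatment of the gradient-error term via Assumption~\ref{assump:weak_sharp_lipsch} and the $\gamma_{g,k}$-weighted summability from Lemma~\ref{lemma:rate_ws} (enabled by $\gamma_{f,k}=\gamma_{g,k}$), and the same appeal to Lemma~\ref{polyak2} followed by the standard subsequence argument. Your only addition is a more explicit derivation of $\theta_k\to\bar\theta\in\Theta^*$ (which the paper simply attributes to Lemma~\ref{lemma:rate_ws}) and an explicit remark on why the Young's-inequality splitting of Proposition~\ref{Prop: Conv Diminish Step} would fail here; both are welcome refinements rather than a different argument.
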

\begin{proof}
By the nonexpansivity property of the Euclidean projection operator,
   for all $k>0$ {and any $x^* \in X^*$}, $\|x_{k+1}-{x^*}\|^2$ can be bounded as follows:
\begin{align*}
\|x_{k+1}-x^*\|^2&=\| \Pi_X ( x_k - \gamma_{f,k}
		\nabla_xf(x_k,\theta_k))-\Pi_X(x^*)\|^2\notag\\
  &\leq \| (x_k-x^*) - \gamma_{f,k} \nabla_xf(x_k,\theta_k)\|^2\\
  &=\|x_k-x^*\|^2-2\gamma_{f,k}\nabla_xf(x_k,\theta_k)^T(x_k-x^*)+\gamma_{f,k}^2\|\nabla_xf(x_k,\theta_k)\|^2\\
  &\leq\|x_k-x^*\|^2-2\gamma_{f,k}\nabla_xf(x_k,\theta^*)^T(x_k-x^*)-2\gamma_{f,k}r_{k}^T(x_k-x^*)+\gamma_{f,k}^2\|\nabla_xf(x_k,\theta_k)\|^2,
\end{align*}
where $r_k\triangleq\nabla_xf(x_k,\theta_k)-\nabla_xf(x_k,\theta^*)$. By leveraging convexity and the gradient inequality, we have that
\begin{align*}
f(x^*,\theta^*) \geq f(x_k,\theta^*)+\nabla(f(x_k,\theta^*)^T(x^*-x_k),
\end{align*}
implying that $-\nabla_xf(x_k,\theta^*)^T(x_k-x^*)\leq
   -(f(x_k,\theta^*)-f(x^*,\theta^*)).$
By the previous observation and the Cauchy-Schwartz inequality, we have the following:
\begin{align}
& \ \quad \|x_{k+1}-x^*\|^2 \notag\\&\leq
\|x_k-x^*\|^2-2\gamma_{f,k}(f(x_k,\theta^*)-f(x^*,\theta^*))-2\gamma_{f,k}r_{k}^T(x_k-x^*)+\gamma_{f,k}^2\|\nabla_xf(x_k,\theta_k)\|^2
\notag
\\ \notag
& \leq
\|x_k-x^*\|^2-2\gamma_{f,k}(f(x_k,\theta^*)-f(x^*,\theta^*)){+}2\gamma_{f,k}\|r_{k}\|\|x_k-x^*\|+\gamma_{f,k}^2\|\nabla_xf(x_k,\theta_k)\|^2\\
  & \leq
\|x_k-x^*\|^2-2\gamma_{f,k}(f(x_k,\theta^*)-f(x^*,\theta^*))+4CM_{f,\theta}\gamma_{f,k}
\mbox{dist}(\theta_k,\Theta^*)+\gamma_{f,k}^2\|\nabla_xf(x_k,\theta_k)\|^2\label{proof:ws_eq1},
\end{align}
{where $C$ is the constant in Assumption ~\ref{assump:compact and  lips cont}(a)}.
By Lemma ~\ref{lemma:rate_ws},
$$\sum_{k=1}^\infty\gamma_{f,k}\mbox{dist}(\theta_k,\Theta^*)=\sum_{k=1}^\infty\gamma_{g,k}\mbox{dist}(\theta_k,\Theta^*)<
\infty.$$ In addition, $$\sum\limits_{k=1}^\infty\gamma_{f,k}^2\|\nabla_xf(x_k,\theta_k)\|^2\leq C_{2}^2 \sum\limits_{k=1}^\infty\gamma_{f,k}^2< \infty,$$ where $C_2:=\sup_{x\in X, \theta \in \Theta} \|\nabla_x f(x,\theta)\|$. Hence, the conditions of Lemma~\ref{polyak2} are satisfied and the sequence $\|x_{k+1}-x^*\|$ is convergent for any $x^*\in X^*$ and $\sum_{k=1}^\infty \gamma_{f,k}(f(x_k,\theta^*)-f(x^*,\theta^*))< \infty.$
The the latter implies $\liminf_{k\to\infty}
(f(x_k,\theta^*)-f(x^*,\theta^*))=0$ {in view of
$\sum_{k=1}^\infty\gamma_{f,k}=\infty.$}
Since the set $X$ is closed, all accumulation points of $\{x_k\}$ lie in
$X$. Furthermore, since $f(x_k,\theta^*)\to f(x^*,\theta^*)$ along a
subsequence, by continuity of $f$ it follows that $\{x_k\}$ has a
subsequence converging to some point in $X^*$. Moreover, since
$\|x_k-x^*\|$ is { a convergent sequence}, the entire sequence
$\{x_k\}$ converges to some point in $X^*$. {Finally, the sequence
$\{\theta_k\}$ converges to a $\theta^* \in \Theta^*$, a consequence of
Lemma~\ref{lemma:rate_ws}.}
\end{proof}
\subsection{Nonsmooth convex optimization}\label{subsec:nonsmoothConvOpt}
In this section, we derive the global convergence and rate statements
for the regime when function $f(x;\theta)$ is not necessarily
differentiable.  Note that Assumptions ~\ref{assump:differentiability},
	~\ref{assump:f_g_lips_contin_gradient} and
	~\ref{assump:f_and_g_strconv} still hold for function $g$ and {for
		clarity, we restate them in the following assumption} and
		proceed to present a subgradient-based analog of
	Algorithm~\ref{alg1}.

\begin{assumption}
\label{assump: g_assump_sub_grad}
The function $g$ is continuously differentiable in $\theta$, strongly convex, and the gradient map $\nabla_{\theta} g(\theta)$ is Lipschitz continuous in $\theta$ with constant $G_{g}$.
\end{assumption}
\begin{algorithm}[{\bf Joint subgradient scheme}]\label{alg2}
Given an $x_0 \in X$ and a $\theta_0 \in \Theta$ and sequences
	$\{\gamma_{f,k},\gamma_{g,k}\}$, then
\begin{align*}
x_{k+1} & := \Pi_X \left( x_k - \gamma_{f,k} d_k\right),  \ \quad\quad\quad
	   & 	\forall k \geq 0,  \tag{nsOpt$(\theta_k)$}\\
\theta_{k+1} & := \Pi_{\Theta} \left( \theta_k - \gamma_{g,k}
	\nabla_{\theta} g(\theta_k)\right), \ \quad
		   & \forall k \geq 0, \tag{Learn}
\end{align*}
where  $d_k\in \partial f(x_k,\theta_k)$.
\end{algorithm}

\noindent We now state two assumptions employed in this
subsection, the first of which pertains to subgradient boundedness while
the second imposes Lipschitz continuity of $f(x,\theta)$ in $\theta$
uniformly in $x$.
\begin{assumption}[{\bf Subgradient boundedness}]
\label{assump:sub_grad_bound}
There exists an $M>0$ such that $\|d_k\|\leq M$ for all $d_k\in \partial
f(x_k,\theta_k)$ and for all $\theta_k \in \Theta$.
\end{assumption}
\begin{assumption}
\label{assump:sub_lips_cont}
There exists a constant $L_{f,\theta}$ such that
$|f(x,\theta_1)-f(x,\theta_2)|\leq L_{f,\theta}\|\theta_1-\theta_2\|
\quad \forall \theta_1,\theta_2 \in \Theta, x\in X.$
\end{assumption}
The following Lemma will be used subsequently in our convergence analysis.
\begin{lemma}
\label{lem:sub_main}
Let Assumptions ~\ref{assump:sub_grad_bound} and ~\ref{assump:sub_lips_cont} hold. Let $\{x_k\}$ and $\{\theta_k\}$ be the sequences generated by Algorithm ~\ref{alg2}. Then, for all $y\in X$ and $k>0$, we have
\begin{align*}
\|x_{k+1}-y\|^2  \leq  \|x_k-y\|^2-2\gamma_{f,k}(f(x_k,\theta^*)-f(y,\theta^*)) +4L_{f,\theta}\gamma_{f,k}\|\theta_k-\theta^*\|+ \gamma_{f,k}^2M^2,
\end{align*}
where $M$ is defined in Assumption ~\ref{assump:sub_grad_bound} and $L_{f,\theta}$ is the Lipschitz constant in Assumption ~\ref{assump:sub_lips_cont}.
\end{lemma}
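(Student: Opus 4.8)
The plan is to reproduce the standard projected-subgradient recursion, starting from the nonexpansivity of the Euclidean projector and then accounting for two sources of perturbation: the bounded subgradient norm and the gap between the current learning iterate $\theta_k$ and the true parameter $\theta^*$. Since $y \in X$ we have $\Pi_X(y) = y$, so expanding $\|x_{k+1}-y\|^2 = \|\Pi_X(x_k - \gamma_{f,k}d_k) - \Pi_X(y)\|^2$ and using nonexpansivity gives
$$\|x_{k+1}-y\|^2 \leq \|x_k - y\|^2 - 2\gamma_{f,k}d_k^T(x_k-y) + \gamma_{f,k}^2\|d_k\|^2.$$
The final term is controlled immediately by Assumption~\ref{assump:sub_grad_bound}, giving $\gamma_{f,k}^2\|d_k\|^2 \leq \gamma_{f,k}^2 M^2$, which is precisely the last term in the claimed bound.

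The first substantive step is to lower-bound the cross term $d_k^T(x_k - y)$. Because $d_k \in \partial f(x_k,\theta_k)$ is a subgradient of $f(\cdot,\theta_k)$ at $x_k$---taken at the \emph{current} learning iterate rather than at $\theta^*$---the subgradient inequality is naturally stated at $\theta_k$:
$$f(y,\theta_k) \geq f(x_k,\theta_k) + d_k^T(y-x_k),$$
equivalently $d_k^T(x_k-y) \geq f(x_k,\theta_k) - f(y,\theta_k)$. Substituting this into the recursion yields a function-value gap evaluated at $\theta_k$ rather than at the parameter $\theta^*$ that appears in the statement, so a transfer is still required.

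The crux---though it is bookkeeping rather than genuine difficulty---is to move this gap from $\theta_k$ to $\theta^*$ using the Lipschitz continuity of $f$ in $\theta$ uniformly in $x$ (Assumption~\ref{assump:sub_lips_cont}, with constant $L_{f,\theta}$). Applying that bound separately at the two points $x_k$ and $y$ gives
$$f(x_k,\theta_k) - f(y,\theta_k) \geq \big(f(x_k,\theta^*) - f(y,\theta^*)\big) - 2L_{f,\theta}\|\theta_k-\theta^*\|,$$
where the two separate Lipschitz estimates produce the factor of $2$. Carrying this through the $-2\gamma_{f,k}$ coefficient then yields the $4L_{f,\theta}\gamma_{f,k}\|\theta_k-\theta^*\|$ term in the claimed inequality. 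Combining the projection expansion, the subgradient inequality, the subgradient-boundedness bound, and this Lipschitz transfer delivers the stated estimate directly; I expect no further structure (strong convexity or smoothness of $f$) to be needed, and since $y \in X$ was arbitrary the bound holds for all $y \in X$, which is exactly what the subsequent averaging/rate arguments will exploit.
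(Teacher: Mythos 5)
Your proposal is correct and follows exactly the same route as the paper's proof: nonexpansivity of the projection, the subgradient inequality for $f(\cdot,\theta_k)$ at $x_k$, the transfer of the function-value gap from $\theta_k$ to $\theta^*$ via the uniform Lipschitz continuity in $\theta$ (producing the factor $2L_{f,\theta}\|\theta_k-\theta^*\|$, hence $4L_{f,\theta}\gamma_{f,k}\|\theta_k-\theta^*\|$ after multiplying by $2\gamma_{f,k}$), and the subgradient bound $\|d_k\|\leq M$. No gaps.
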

\begin{proof}
By nonexpansivity of the Euclidean projector and triangle inequality, we
may bound $\|x_{k+1} - y\|$ as follows:
\begin{align*}
\|x_{k+1}-y\|^2&\leq \|\Pi_X(x_k-\gamma_{f,k}d_k)-{\Pi_X(y)}\|^2
\leq \|x_k-\gamma_{f,k}d_k-y\|^2 \\
&= \|x_k-y\|^2-2\gamma_{f,k}(x_k-y)^Td_k + \gamma_{f,k}^2\|d_k\|^2 \\
& \leq   \|x_k-y\|^2-2\gamma_{f,k}(x_k-y)^Td_k + \gamma_{f,k}^2M^2.
\end{align*}
Now, by leveraging convexity of function $f(x,\theta)$ in $x$ for all
$\theta$, we obtain
\begin{align}
\|x_{k+1}-y\|^2  &\leq  \|x_k-y\|^2-2\gamma_{f,k}(f(x_k,\theta_k)-f(y,\theta_k))+ \gamma_{f,k}^2M^2.\label{lem:sub_main ineq1}
\end{align}
 By Assumption~\ref{assump:sub_lips_cont}, the function $f(x,\theta)$ is Lipschitz
 continuous in $\theta$ for every $x$. Consequently,
 $|f(x_k,\theta_k)-f(x_k,\theta^*)|\leq
 L_{f,\theta}\|\theta_k-\theta^*\|$ and
 $|f(y,\theta_k)-f(y,\theta^*)|\leq L_{f,\theta}\|\theta_k-\theta^*\|$.
 It follows that
$$ f(x_k,\theta^*)-f(x_k,\theta_k)\leq
 L_{f,\theta}\|\theta^*-\theta_k\| \mbox{ and }
 f(y,\theta_k)-f(y,\theta^*) \leq L_{f,\theta}\|\theta_k-\theta^*\|.$$
 By combining these two inequalities, we get the following lower bound:
\begin{align*}
f(x_k,\theta_k)-f(y,\theta_k)\geq f(x_k,\theta^*)-f(y,\theta^*) -2L_{f,\theta}\|\theta_k-\theta^*\|.
\end{align*}
Now by combining above inequality with ~\eqref{lem:sub_main ineq1}, we have that
\begin{align}
\|x_{k+1}-y\|^2  \leq  \|x_k-y\|^2-2\gamma_{f,k}(f(x_k,\theta^*)-f(y,\theta^*)) +4L_{f,\theta}\gamma_{f,k}\|\theta_k-\theta^*\|+ \gamma_{f,k}^2M^2.\label{lemm:sub_ineq}
\end{align}
\end{proof}

\noindent {By leveraging Lemma~\ref{lem:sub_main}, we now provide the main convergence result for subgradient-based schemes for resolving misspecified convex optimization problems.}
\begin{proposition}[{\bf Global convergence for diminishing steplength
schemes}]
Let Assumptions { ~\ref{assump: g_assump_sub_grad}}, ~\ref{assump:sub_grad_bound}, and
	~\ref{assump:sub_lips_cont} hold. {Additionally, let
		$\gamma_{f,k}$ be defined based on Assumption
			~\ref{assump:step_length} and $\gamma_{g,k}$ be fixed at $\gamma_g$ so that $0<\gamma_g < 2/G_g$.} Let $\{x_k,\theta_k\}$ be the
	sequences generated by Algorithm ~\ref{alg2}. Then, $\{x_k\}$
	converges to a point in $X^*$ and $\{\theta_k\}$ converges to
{$\theta^*\in \Theta$}.
\end{proposition}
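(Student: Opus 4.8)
The plan is to follow the template of the diminishing-steplength proofs in the smooth regime (in particular Proposition~\ref{Prop: Conv Diminish Step}), but to build the governing recursion from the subgradient estimate of Lemma~\ref{lem:sub_main} rather than from a gradient-descent expansion. First I would invoke Lemma~\ref{lem:sub_main} with $y=x^*$ for an arbitrary fixed $x^* \in X^*$, which yields
\begin{align*}
\|x_{k+1}-x^*\|^2 \leq \|x_k-x^*\|^2 - 2\gamma_{f,k}\big(f(x_k,\theta^*)-f(x^*,\theta^*)\big) + 4L_{f,\theta}\gamma_{f,k}\|\theta_k-\theta^*\| + \gamma_{f,k}^2 M^2.
\end{align*}
Since $g$ is strongly convex with Lipschitz gradient (Assumption~\ref{assump: g_assump_sub_grad}) and $\gamma_g<2/G_g$ is held fixed, the learning iterates contract geometrically, $\|\theta_k-\theta^*\| \leq q_g^k\|\theta_0-\theta^*\|$ for some $q_g\in(0,1)$; in particular $\theta_k \to \theta^*$, which already secures the second assertion of the proposition.

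Next I would match this recursion to Lemma~\ref{polyak2} by setting $u_k=\|x_k-x^*\|^2$, $\beta_k=0$, $\upsilon_k=2\gamma_{f,k}(f(x_k,\theta^*)-f(x^*,\theta^*))$, and $\alpha_k=4L_{f,\theta}\gamma_{f,k}\|\theta_k-\theta^*\|+\gamma_{f,k}^2 M^2$. Here $\upsilon_k\geq 0$ because $x^*\in X^*$ minimizes $f(\cdot,\theta^*)$ over $X$. The summability $\sum_k\alpha_k<\infty$ follows term by term: $\sum_k\gamma_{f,k}^2 M^2<\infty$ by Assumption~\ref{assump:step_length}, while the geometric bound on $\|\theta_k-\theta^*\|$ together with Cauchy--Schwarz gives $\sum_k\gamma_{f,k}\|\theta_k-\theta^*\| \leq \|\theta_0-\theta^*\|\big(\sum_k\gamma_{f,k}^2\big)^{1/2}\big(\sum_k q_g^{2k}\big)^{1/2}<\infty$. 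Lemma~\ref{polyak2} then delivers two conclusions: $\|x_k-x^*\|^2$ converges to some $\bar v(x^*)\geq 0$, and $\sum_k\gamma_{f,k}(f(x_k,\theta^*)-f(x^*,\theta^*))<\infty$.

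From this last sum and the divergence $\sum_k\gamma_{f,k}=\infty$ I would infer $\liminf_k f(x_k,\theta^*)=f(x^*,\theta^*)$, so some subsequence drives the objective to its minimum. Convergence of $\|x_k-x^*\|^2$ makes $\{x_k\}$ bounded, and bounded subgradients (Assumption~\ref{assump:sub_grad_bound}) make $f(\cdot,\theta^*)$ Lipschitz, hence continuous; since $X$ is closed, a further subsequence converges to some $\bar x\in X$ with $f(\bar x,\theta^*)=f(x^*,\theta^*)$, i.e.\ $\bar x\in X^*$. The decisive step, and the one I expect to be the crux, is upgrading this subsequential convergence to convergence of the whole sequence: because Lemma~\ref{lem:sub_main} holds for \emph{every} $y\in X$, the argument above shows $\|x_k-x^*\|$ converges for each fixed $x^*\in X^*$; choosing $x^*=\bar x$ and noting that the subsequence satisfies $\|x_{k_j}-\bar x\|\to 0$ forces the limit $\bar v(\bar x)=0$, so the entire sequence $\{x_k\}$ converges to $\bar x\in X^*$. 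Combined with $\theta_k\to\theta^*$ from the geometric contraction, this completes the proof.
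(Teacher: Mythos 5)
Your proposal is correct and follows essentially the same route as the paper's proof: invoke Lemma~\ref{lem:sub_main} at $y=x^*$, verify the summability hypotheses of Lemma~\ref{polyak2} using the geometric decay of $\|\theta_k-\theta^*\|$, extract a minimizing subsequence from $\sum_k\gamma_{f,k}(f(x_k,\theta^*)-f(x^*,\theta^*))<\infty$ and $\sum_k\gamma_{f,k}=\infty$, and upgrade to full convergence via the convergence of $\|x_k-x^*\|$. If anything, your treatment of the last step (fixing $x^*=\bar x$ to force $\bar v(\bar x)=0$) is spelled out more carefully than in the paper, which asserts the conclusion tersely.
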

\begin{proof}
Using ~\eqref{lemm:sub_ineq} for $y=x^*$, {where $x^*$ is any point in $ X^*$}, we obtain
\begin{align*}
\|x_{k+1}-x^*\|^2  &\leq  \|x_k-x^*\|^2-2\gamma_{f,k}(f(x_k,\theta^*)-f(x^*,\theta^*)) + 4L_{f,\theta}\gamma_{f,k}\|\theta_k-\theta^*\|+ \gamma_{f,k}^2M^2.
\end{align*}
To prove the convergence, we employ Lemma ~\ref{polyak2}. Since
$\|\theta_k-\theta^*\| \leq q_g^k\|\theta_0-\theta^*\|$, we have that
\begin{align*}
\sum_{k=0}^\infty4L_{f,\theta} \gamma_{f,k}\|\theta_k-\theta^*\|
\leq \frac{4L_{f,\theta}\|\theta_0-\theta^*\|}{1-q_g} <\infty
\mbox{ and } \sum_{k=0}^\infty \gamma_{f,k}^2M^2 < \infty.
\end{align*}
 Hence, conditions of Lemma ~\ref{polyak2} are satisfied and $x_k\to
\bar{x}\in X$ as $k\to\infty$ and $\sum_{k=0}^\infty
\gamma_{f,k}(f(x_k,\theta^*)-f(x^*,\theta^*))<\infty$. Because
$\sum_{k=0}^\infty \gamma_{f,k}=\infty$, we can conclude that
$\liminf_{k \to \infty} f(x_k,\theta^*)= f(x^*,\theta^*)$. This
	implies that a subsequence of $\{x_k\}$ converges to a point in
		$X^*$. But the entire sequence is convergent, implying that the
		entire sequence converges to a point in $X^*$.
	Furthermore, $\theta_k \to \theta^*$ as $k \to \infty$.
\end{proof}

{In keeping with the focus of this paper, we now provide derive rate
statements for the function iterates where we quantify the impact of
learning.  }
\begin{proposition}[{\bf Rate analysis with averaging}]\label{prop:sub_aver}
Let Assumptions {~\ref{assump: g_assump_sub_grad}} , ~\ref{assump:sub_grad_bound}, and
	~\ref{assump:sub_lips_cont} hold.
Let  $\gamma_{g,k}$ be fixed at $\gamma_g$ such that $0<\gamma_{g}<
2/G_g$.  Consider the sequence $\{x_k,\theta_k\}$ generated by
Algorithm ~\ref{alg2}  and $\bar x_{k}\triangleq\frac{\sum_{i=0}^k
	\gamma_{f,i}x_i}{\sum_{i=0}^k\gamma_{f,i}}$. Then the following hold:
\begin{enumerate}
\item[(i)] If
		$\gamma_{f,k}$ is defined based on Assumption
			~\ref{assump:step_length}, then
     $$\lim_{k\to\infty} |f(\xbar_{k},\theta_{k})-f(x^*,\theta^*)|=0.$$
\item[(ii)] Suppose Algorithm~\ref{alg2} is to be terminated after
	$K$ iterations and $\gamma_f$ (the optimal constant steplength) is
		defined as \begin{align}\gamma_{f,K}{=} \frac{\|x_0-x^*\|}{M\sqrt{{K}+1}},\label{opt_step_rule}\end{align} then
$$ |f(\xbar_{K},\theta_{K})-f(x^*,\theta^*)| \leq
	\frac{d_x}{\sqrt{K+1}} + \|\theta_0-\theta^*\|\left(L_{f,\theta} q_g^K +
		\frac{c_{\theta}}{(K+1)}\right), $$
where $d_x = M\|x_0 - x^*\|$ and
$c_{\theta} = 2L_{f,\theta}/(1-q_g).$
\end{enumerate}
\end{proposition}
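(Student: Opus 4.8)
The plan is to build directly on Lemma~\ref{lem:sub_main}, which already absorbs the misspecification into a per-iteration term of order $\gamma_{f,k}\|\theta_k-\theta^*\|$. Setting $y=x^*$ for an arbitrary $x^*\in X^*$, rearranging, and telescoping the recursion from $k=0$ to $K$ will yield a bound of the form
\begin{align*}
2\sum_{k=0}^{K}\gamma_{f,k}\big(f(x_k,\theta^*)-f(x^*,\theta^*)\big) \leq \|x_0-x^*\|^2 + 4L_{f,\theta}\sum_{k=0}^{K}\gamma_{f,k}\|\theta_k-\theta^*\| + M^2\sum_{k=0}^{K}\gamma_{f,k}^2,
\end{align*}
where the nonnegative terminal term $\|x_{K+1}-x^*\|^2$ is discarded. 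The decisive structural input is the geometric decay $\|\theta_k-\theta^*\|\leq q_g^k\|\theta_0-\theta^*\|$ furnished by strong convexity of $g$ (Assumption~\ref{assump: g_assump_sub_grad}), which converts the misspecification sum into $\|\theta_0-\theta^*\|\sum_{k=0}^{K}\gamma_{f,k}q_g^k$, a quantity that is always finite and, under constant steplength, bounded by $\gamma_f/(1-q_g)$. After dividing by $2\sum_{k=0}^{K}\gamma_{f,k}$ and invoking convexity of $f(\cdot,\theta^*)$ together with Jensen's inequality applied to the weighted average $\xbar_K$, I will obtain a bound on $f(\xbar_K,\theta^*)-f(x^*,\theta^*)$.

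For part~(i), with $\gamma_{f,k}$ diminishing per Assumption~\ref{assump:step_length}, the numerator stays bounded: $\sum_k\gamma_{f,k}^2<\infty$ by hypothesis and $\sum_k\gamma_{f,k}q_g^k<\infty$ since $\{\gamma_{f,k}\}$ is bounded and $q_g<1$, while the denominator $\sum_{k=0}^{K}\gamma_{f,k}\to\infty$. Hence $f(\xbar_K,\theta^*)-f(x^*,\theta^*)\to 0$. To pass from $\theta^*$ to the running iterate $\theta_K$, I will use the Lipschitz bridge $|f(\xbar_K,\theta_K)-f(\xbar_K,\theta^*)|\leq L_{f,\theta}\|\theta_K-\theta^*\|$ from Assumption~\ref{assump:sub_lips_cont} together with the triangle inequality; since $\|\theta_K-\theta^*\|\to 0$, the full quantity $|f(\xbar_K,\theta_K)-f(x^*,\theta^*)|$ will vanish.

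For part~(ii), I specialize to the constant step $\gamma_{f,k}\equiv\gamma_f$, so $\xbar_K$ reduces to the plain average and the telescoped bound simplifies to
\begin{align*}
f(\xbar_K,\theta^*)-f(x^*,\theta^*) \leq \frac{\|x_0-x^*\|^2}{2\gamma_f(K+1)} + \frac{2L_{f,\theta}\|\theta_0-\theta^*\|}{(K+1)(1-q_g)} + \frac{\gamma_f M^2}{2}.
\end{align*}
Substituting the prescribed $\gamma_f=\|x_0-x^*\|/(M\sqrt{K+1})$ balances the first and third terms, each collapsing to $M\|x_0-x^*\|/(2\sqrt{K+1})$, so together they give $d_x/\sqrt{K+1}$ with $d_x=M\|x_0-x^*\|$; the middle term supplies the $c_\theta/(K+1)$ factor with $c_\theta=2L_{f,\theta}/(1-q_g)$. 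Finally, applying the same Lipschitz bridge and triangle inequality as in part~(i), and noting that $f(\xbar_K,\theta^*)\geq f(x^*,\theta^*)$ makes the absolute value benign, will add precisely the term $L_{f,\theta}q_g^K\|\theta_0-\theta^*\|$ and deliver the stated estimate.

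The routine work is the telescoping and the steplength substitution; the one point demanding care is the treatment of the misspecification sum. The hard part will be recognizing that the learning error enters \emph{linearly} (not quadratically) in Lemma~\ref{lem:sub_main}, so its geometric summability is exactly what confines the misspecification contribution to the benign scale $1/(K+1)$ rather than degrading the dominant $1/\sqrt{K+1}$ subgradient rate — this is the mechanism behind the claim that averaging incurs no loss in the standard rate.
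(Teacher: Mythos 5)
Your proposal is correct and follows essentially the same route as the paper's own proof: telescoping the inequality of Lemma~\ref{lem:sub_main} with $y=x^*$, exploiting the geometric decay $\|\theta_k-\theta^*\|\leq q_g^k\|\theta_0-\theta^*\|$ to control the misspecification sum, applying Jensen's inequality to the weighted average, bridging from $\theta^*$ to $\theta_K$ via the Lipschitz bound of Assumption~\ref{assump:sub_lips_cont}, and balancing the $\frac{\|x_0-x^*\|^2}{2\gamma_f(K+1)}$ and $\frac{\gamma_f M^2}{2}$ terms with the prescribed steplength. No gaps; the observation that the learning error enters linearly and sums geometrically is exactly the mechanism the paper relies on.
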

\begin{proof}
(i) By letting $y=x^*$ in ~\eqref{lemm:sub_ineq}  and by summing
~\eqref{lemm:sub_ineq} over {$k$}, we have that the following holds:
\begin{align*}
\|x_{k+1}-x^*\|^2  & \leq
	\|x_0-x^*\|^2-2\sum_{i=0}^k\gamma_{f,i}(f(x_i,\theta^*)-f(x^*,\theta^*))
	 +4L_{f,\theta}\sum_{i=0}^k\gamma_{f,i}\|\theta_i-\theta^*\|+M^2\sum_{i=0}^k \gamma_{f,i}^2.
\end{align*}
By the nonnegativity of $\|x_{k+1}-x^*\|^2$, it follows that
\begin{align}
 2\sum_{i=0}^k\gamma_{f,i}(f(x_i,\theta^*)-f(x^*,\theta^*))\leq
	\|x_0-x^*\|^2+4L_{f,\theta}\sum_{i=0}^k\gamma_{f,i}\|\theta_i-\theta^*\| +M^2\sum_{i=0}^k \gamma_{f,i}^2.\label{prop:sub_summ}
\end{align}
From the convexity of $f(x,\theta^*)$ in $x$, we have the following:
\begin{align}
 \frac{2}{\sum_{i=0}^k\gamma_{f,i}}\sum_{i=0}^k\gamma_{f,i}(f(x_i,\theta^*)-f(x^*,\theta^*))  \geq 2 \left( f( \bar x_k,\theta^*
				) -f(x^*,\theta^*)\right).
\label{prop:sub_conv}
\end{align}
By combining ~\eqref{prop:sub_summ} and ~\eqref{prop:sub_conv}, we obtain the inequality
\begin{align*}
 f\left( \bar x_k,\theta^* \right) -f(x^*,\theta^*) \leq \frac{\|x_0-x^*\|^2+M^2\sum_{i=0}^k
	 \gamma_{f,i}^2}{2\sum_{i=0}^k\gamma_{f,i}}
 +\frac{2L_{f,\theta}\sum_{i=0}^k\gamma_{f,i}\|\theta_i-\theta^*\|}{\sum_{i=0}^k\gamma_{f,i}}.
\end{align*}
Notably, the second term arises from learning and can be further bounded
	as follows:
\begin{align*}
2L_{f,\theta}\sum_{i=0}^k\gamma_{f,i}\|\theta_i-\theta^*\| & \leq 2L_{f,\theta} \gamma_{f,0}
\|\theta_0-\theta^*\| \sum_{i=0}^k q_{g}^i\leq \frac{2L_{f,\theta} \gamma_{f,0}
\|\theta_0-\theta^*\| (1-q_g^{k+1})}{1-q_g}.
\end{align*}
Consequently, we may bound $f(\bar x_k,\theta^*)-f(x^*,\theta^*)$ as
	follows:
\begin{align*}
\quad f\left( \xbar_k,\theta^* \right) -f(x^*,\theta^*)\leq \frac{\|x_0-x^*\|^2+M^2\sum_{i=0}^k
	 \gamma_{f,i}^2}{2\sum_{i=0}^k\gamma_{f,i}} +  \frac{2L_{f,\theta} \gamma_{f,0}
\|\theta_0-\theta^*\| (1-q_g^{k+1})}{(1-q_g)\sum_{i=0}^k\gamma_{f,i}}.
\end{align*}
It follows that $|f(\xbar_k,\theta_k)-f(x^*,\theta^*)|$ may be bounded
	as follows:
\begin{align*}
  |f(\xbar_k,\theta_k)-f(x^*,\theta^*)|& \leq |f(\xbar_k,\theta_k)-f(\xbar_k,\theta^*)|
	+ |f(\xbar_k,\theta^*)-f(\xbar_k,\theta^*)|\\
		& \leq L_{f,\theta} q_g^k \|\theta_0-\theta^*\|
		 + \frac{\|x_0-x^*\|^2+M^2\sum_{i=0}^k
	 \gamma_{f,i}^2}{2\sum_{i=0}^k\gamma_{f,i}}
 +  \frac{2L_{f,\theta} \gamma_{f,0}
\|\theta_0-\theta^*\| (1-q_g^{k+1})}{(1-q_g)\sum_{i=0}^k\gamma_{f,i}}.
\end{align*}
Since $q_g < 1$, $\sum_{i=0}^{\infty} \gamma_{f,i} = \infty$, and
$\sum_{i=0}^{\infty} \gamma^2_{f,i} < \infty$, it
	follows that
$ \lim_{k\to \infty} |f(\xbar_k,\theta_k)-f(x^*,\theta^*)| = 0. $ \\
(ii) Next, if we assume that the steplength is fixed at $\gamma_f$,
after $k = K$ iterations, the bound on the error is given by
the following:
\begin{align*}
 \ |f(\bar x_{K},\theta_{K})-f(x^*,\theta^*)|& \leq L_{f,\theta} q_g^{K} \|\theta_0-\theta^*\|
		 + \frac{\|x_0-x^*\|^2+M^2 (K+1) \gamma_f^2}{2(K+1) \gamma_f} +  \frac{2L_{f,\theta}
\|\theta_0-\theta^*\| (1-q_g^{K+1})}{(1-q_g)(K+1)}.
\end{align*}
 If we minimize the right hand side with respect to
	$\gamma_f$, we arrive at the best optimal constant stepsize
\begin{align*}
\gamma_{f,K}{=} \frac{\|x_0-x^*\|}{M\sqrt{K+1}}.
\end{align*}
Using this step length, we have the optimal convergence rate of
\begin{align*}
|f(\xbar_{K},\theta_{K})-f(x^*,\theta^*)|  &\leq  L_{f,\theta} q_g^{K}
  \|\theta_0-\theta^*\| +  \frac{2L_{f,\theta}
\|\theta_0-\theta^*\| (1-q_g^{K+1})}{(1-q_g)(K+1)}  +
\frac{M\|x_0-x^*\|}{\sqrt{ K+1}} \\
	& \leq
	\frac{d_x}{\sqrt{K+1}} + \|\theta_0-\theta^*\| \left( L_{f,\theta}
			q_g^K + \frac{2L_{f,\theta}}{(1-q_g)(K+1)}\right) \\
	& =
	\frac{d_x}{\sqrt{K+1}} + \underbrace{ \|\theta_0-\theta^*\|\left(L_{f,\theta} q_g^K +
		\frac{c_{\theta}}{(K+1)}\right)}_{\tiny \mbox{Impact from
			learning}},
\end{align*}
where $d_x = M\|x_0 - x^*\|$ and
$c_{\theta} = 2L_{f,\theta}/(1-q_g).$
\end{proof}

\noindent {\bf Remark:}  Standard subgradient methods for convex
optimization display a convergence rate of ${\cal O}(1/\sqrt{K})$ in
function value ~\cite{Boyd:2004:CO:993483}. Notably, the joint scheme shows {\bf no} degradation in
the rate, not even in a constant factor sense. More specifically,
	the modification in the rate is given by
		$\|\theta_0-\theta^*\|{\cal O}\left(\frac{1}{K} + q^K\right)$, with both terms arising from learning diminishing to zero at a
faster rate. This factor is scaled by the distance of $\theta_0$ from
its true value $\theta^*$ and we recover the original rate if $\theta_0
= \theta^*$.

\section{Misspecified monotone variational inequality problems}\label{sec:monotone VI}
{In the problem formulation investigated thus far, the misspecified
	parameter $\theta^*$ lay in the objective function $f$. Yet
		in many instances, the misspecification may also arise in the
		constraint set. In particular, consider the following misspecified problem $(\cal C'(\theta^*))$, defined as
\begin{align}
\tag{$\cal C'(\theta^*)$} \min_{x \in X(\theta^*)} \, f(x,\theta^*), \end{align} where
$x \in \Real^n$, $f: X \times \Theta \to \Real$ is a convex function in
$x$ for every $\theta \in \Theta \subseteq \Real^m$. One approach is to
relax the constraints that are misspecified and consider a Lagrangian
(or an augmented Lagrangian) approach.  Another approach lies in leveraging the convexity of the problem and considering the
	complementarity problem arising from the first-order
		(sufficient) optimality
		conditions. It is well known that if the constraints
set $X(\theta^*)$ has an algebraic structure given by
$$X(\theta^*) \triangleq \left\{x:
h(x;\theta^*)\geq 0\, ,x\geq 0 \right\},$$ where $h(x,\theta)$ is a
convex function in $x$ for every $\theta$, then the first-order
conditions are given by
\begin{align}
\tag{CP$(\theta$)}\begin{aligned}
		0 \leq x & \perp \nabla_x f(x,\theta) - \nabla_x h(x,\theta)^T
			\lambda \geq 0, \\
		0 \leq \lambda & \perp h(x,\theta) \geq 0,
	\end{aligned} \end{align}
where $u \perp v \equiv [u]_i[v]_i = 0$ for every $i$. It is well known
~\cite{facchinei02finite} that this complementarity problem
(CP$(\theta)$) is equivalent to VI$(Z,F(.;\theta))$, where $Z \triangleq
\Real^{m+n}_+$ and $F(z)$,  defined as
$$ F(z) \triangleq \pmat{ \nabla_x f(x,\theta) - \nabla_x h(x,\theta)^T
			\lambda  \\
		 h(x,\theta)}, $$ is a monotone map. More generally,
variational inequality problems represent a broadly encompassing tool
	for capturing a range of equilibrium problems arising in economics,
		engineering, and applied sciences (cf.~\cite{Pang03I}). 	This
motivates us to extend the realm of computational problems to
accommodate the class of misspecified monotone variational inequality
problems, which is formally defined later in this section. By doing so, we may not only accommodate the problem $(\cal C'(\theta^*))$, but
		also we can consider a far broader class of misspecified problems.
		
Given a set $X \subseteq \Real^n$ and $F:X \to \Real^n$, a single-valued
mapping, then a variational inequality problem VI$(X,F)$ requires an $x
\in X$ such that $(y-x)^TF(x) \geq 0$ for all $y \in X.$
More specifically, we consider the misspecifed variational inequality
problem VI$(X,F(\bullet;\theta^*))$ where $F:X \times \Theta \to
\Real^n$:
\begin{align}
\tag{$\cal V(\theta^*)$} (y-x)^TF(x;\theta^*) \geq 0, \qquad \forall y \in X. \end{align}
In Subsections ~\ref{subsec: Extra grad
	VI} and ~\ref{subsecc:Tikhonov VI }, we present extragradient and regularized first-order schemes,
	respectively, for misspsecified monotone variational inequality problems with strongly convex learning problems. Throughout this section, we make the following assumption on the learning function $g$ and map $F$.
	\begin{assumption} \label{assump:F lips cont}
\be
\item[(a)] The function $g$ is differentiable, strongly convex with constant $\eta_g$, and Lipschitz continuous in gradient with constant $G_g$.
\item[(b)] The map $F$ is monotone in $x$ and uniformly Lipschitz continuous in $x$ and $\theta$ with constants $L_{F,x}$ and $L_{F,\theta}$, respectively:
    \begin{align*}
	\|F(x_1;\theta)-F(x_2;\theta)\| & \leq L_{F,x}\|x_1-x_2\|\quad \forall
	x_1,x_2\in X, \quad \forall \theta\in \Theta, \\
           \|F(x,\theta_1)-F(x,\theta_2)\| & \leq
		   L_{F,\theta}\|\theta_1-\theta_2\|\quad \ \forall
		   \theta_1,\theta_2\in \Theta,  \quad \ \forall x\in X.
\end{align*}
\ee
\end{assumption}
\subsection{Extragradient schemes}\label{subsec: Extra grad VI}
{The extragradient scheme was first proposed by
Korpolevich~\cite{Korpelevich} and such approaches have been enormously
	useful in the solution of both convex optimization problems and
	monotone variational inequality problems~\cite{Pang03I} via constant
	steplength schemes.  Subsequently,
			   Nemirovski~\cite{Nemirovski:2005:PRC:1029075.1039910}
proposed a prox-type method with a general distance function with
	convergence rate of ${\cal O}(1/K)$,  which is equivalent to
	extragradient scheme under a  Euclidian distance function}.  In this
	subsection, we consider whether the extragradient framework can be
	extended to the regime of interest and propose a misspecified
	variant of the extragradient scheme:
\begin{algorithm}[{\bf A joint extragradient scheme}]
\label{alg4}
Given $x_0 \in X,$ $\theta_0 \in \Theta$ and a steplength $\tau$,
\begin{align}
z_{k+1} &:=\Pi_X(x_k-\tau F(x_k;\theta_k))\qquad  & \forall k>0,
	\tag{Extra$_x(\theta_k)$}\\
x_{k+1} &:=\Pi_X(x_k-\tau F(z_{k+1};\theta_k))\quad & \forall k>0,
	\tag{Extra$_z(\theta_k)$}\\
\theta_{k+1} &:=\Pi_{\Theta}(\theta_k-\gamma_g \nabla_{\theta}
		g(\theta_k))  \ \quad \quad & \forall k>0. \tag{Learn}
\end{align}
\end{algorithm}
{Unlike the standard projected gradient framework, the extragradient}
scheme requires two consecutive gradient steps with the same belief
$\theta_k$. {Note that the proof of convergence follows along the lines
	of that provided by Facchinei and Pang~\cite{Pang03II}, but with
		some care required to handle  the extra terms  arising from
		learning. We  begin  by presenting a supporting Lemma.}
\begin{lemma}
\label{lemma:extra bound}
Let Assumption ~\ref{assump:F lips cont} holds and $\{x_k,\theta_k\}$ be
the sequence generated by ~{Algorithm} ~\ref{alg4}. If $x^*$ is a point
in $X^*$, then for all $k$,
$$\|x_{k+1}-x^*\|^2\leq \|x_k-x^*\|^2-(1-\tau^2L_{F,x}^2)\|z_{k+1}-x_k\|^2+2\tau L_{F,\theta}\|\theta_{k+1}-\theta^*\|\|x^*-z_{k+1}\|.$$
\end{lemma}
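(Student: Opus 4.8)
The plan is to adapt the classical Korpelevich / Facchinei--Pang extragradient estimate, treating the misspecified belief $\theta_k$ as a perturbation that enters only through the Lipschitz-in-$\theta$ property of $F$ in Assumption~\ref{assump:F lips cont}(b). First I would apply the projection characterization $(u-\Pi_X(u))^T(y-\Pi_X(u))\le 0$ to the outer step $x_{k+1}=\Pi_X(x_k-\tau F(z_{k+1};\theta_k))$ with $y=x^*$. Combined with the identity $\|x_{k+1}-x^*\|^2=\|x_k-x^*\|^2-\|x_k-x_{k+1}\|^2+2(x_{k+1}-x_k)^T(x_{k+1}-x^*)$, this yields the three-point bound $\|x_{k+1}-x^*\|^2\le\|x_k-x^*\|^2-\|x_k-x_{k+1}\|^2+2\tau F(z_{k+1};\theta_k)^T(x^*-x_{k+1})$, which isolates the single cross term that must be controlled.

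Second, I would split that cross term along $x^*-x_{k+1}=(x^*-z_{k+1})+(z_{k+1}-x_{k+1})$. For the $(x^*-z_{k+1})$ piece I would insert the true map by writing $F(z_{k+1};\theta_k)=F(z_{k+1};\theta^*)+\big(F(z_{k+1};\theta_k)-F(z_{k+1};\theta^*)\big)$. Since $x^*$ solves the variational inequality $\mathcal{V}(\theta^*)$ and $F(\cdot;\theta^*)$ is monotone, $F(z_{k+1};\theta^*)^T(z_{k+1}-x^*)\ge F(x^*;\theta^*)^T(z_{k+1}-x^*)\ge 0$, so the term $F(z_{k+1};\theta^*)^T(x^*-z_{k+1})\le 0$ is discarded; the remaining difference is controlled by Cauchy--Schwarz and Lipschitz continuity in $\theta$, producing exactly the learning contribution $2\tau L_{F,\theta}\|\theta_k-\theta^*\|\,\|x^*-z_{k+1}\|$ recorded in the bound (the misspecification of the belief driving iteration $k$). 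This is the only place the misspecification surfaces.

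Third --- and this is the delicate part --- I would absorb the $(z_{k+1}-x_{k+1})$ piece together with $-\|x_k-x_{k+1}\|^2$ into the negative term $-(1-\tau^2L_{F,x}^2)\|z_{k+1}-x_k\|^2$. Here I would invoke the projection inequality for the inner step $z_{k+1}=\Pi_X(x_k-\tau F(x_k;\theta_k))$ evaluated at $y=x_{k+1}\in X$, expand $\|x_k-x_{k+1}\|^2=\|x_k-z_{k+1}\|^2+2(x_k-z_{k+1})^T(z_{k+1}-x_{k+1})+\|z_{k+1}-x_{k+1}\|^2$, and combine so that the residual reduces to $2\tau\big(F(z_{k+1};\theta_k)-F(x_k;\theta_k)\big)^T(z_{k+1}-x_{k+1})$. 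Crucially, both evaluations here use the \emph{same} belief $\theta_k$, so Lipschitz continuity in $x$ bounds this by $2\tau L_{F,x}\|z_{k+1}-x_k\|\,\|z_{k+1}-x_{k+1}\|$ with no $\theta$-error intruding; a Young's inequality $2ab\le a^2+b^2$ then cancels the $\|z_{k+1}-x_{k+1}\|^2$ terms and leaves precisely $-(1-\tau^2L_{F,x}^2)\|z_{k+1}-x_k\|^2$.

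Assembling the three contributions gives the stated inequality. I expect the main obstacle to be the sign and norm bookkeeping in the third step: one must combine the inner-projection inequality with the squared-norm expansion and balance the Young split so that the $\|z_{k+1}-x_{k+1}\|^2$ terms cancel \emph{exactly}, reproducing the clean coefficient $1-\tau^2L_{F,x}^2$. By contrast, monotonicity disposes of the variational-feasibility term cleanly, and because the extragradient scheme uses the common belief $\theta_k$ in both steps, the misspecification does not contaminate step three and contributes only the single additive $\theta$-term from step two.
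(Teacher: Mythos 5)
Your proposal is correct and follows essentially the same route as the paper's proof: the outer-projection three-point bound, monotonicity of $F(\cdot\,;\theta^*)$ together with $x^*\in X^*$ to discard the variational term, the inner-projection inequality combined with Lipschitz continuity in $x$ at the common belief $\theta_k$ and a Young split that cancels the $\|x_{k+1}-z_{k+1}\|^2$ terms, and the Lipschitz-in-$\theta$ bound on the residual $r_{k+1}^T(x^*-z_{k+1})$ (your regrouping of $x^*-x_{k+1}$ merely collects the two $r_{k+1}$ contributions the paper combines at the end). Note that both your argument and the paper's own derivation produce $\|\theta_k-\theta^*\|$ in the learning term, so the $\|\theta_{k+1}-\theta^*\|$ in the lemma statement is evidently a typo; the subsequent convergence theorem indeed uses $\|\theta_k-\theta^*\|$.
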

\begin{proof}
{By the projection property, we have that for any $x\in \R^n$,}
\begin{align*}
\|\Pi_X(x)-z\|^2\leq \|x-z\|^2-\|\Pi_X(x)-x\|^2 \quad \mbox{for all} \quad z\in X.
\end{align*}
Using above relation with $x=x_k-\tau F(z_{k+1};\theta_k)$ and $z=x^*$, we obtain
\begin{align*}
\|x_{k+1}-x^*\|^2\leq \|x_k-\tau F(z_{k+1};\theta_k)-x^*\|^2-\|x_{k+1}-(x_k-\tau F(z_{k+1};\theta_k))\|^2.
\end{align*}
By expanding the terms on the right hand side, we have
\begin{align}
& \quad \ \|x_{k+1}-x^*\|^2\\
&\leq \|x_k-x^*\|^2-\|x_{k+1}-x_k\|^2+2\tau F(z_{k+1};\theta_k)^T(x^*-x_{k+1})\notag\\
&\leq \|x_k-x^*\|^2-\|x_{k+1}-x_k\|^2+2\tau F(z_{k+1};\theta^*)^T(x^*-x_{k+1})\notag\\ &+ 2\tau (F(z_{k+1};\theta_k)-F(z_{k+1},\theta^*))^T(x^*-x_{k+1})\notag\\
&\leq \|x_k-x^*\|^2-\|x_{k+1}-x_k\|^2+2\tau F(z_{k+1};\theta^*)^T(x^*-x_{k+1})+ 2\tau r_{k+1}^T(x^*-x_{k+1}),\label{proof:extra_ineq1}
\end{align}
where {the second inequality is a consequence of adding and
	subtracting $F(z_{k+1},\theta^*)^T(x^*-x_{k+1})$ and $r_{k+1}$ is
		defined as }  $r_{k+1}\triangleq
		F(z_{k+1},\theta_k)-F(z_{k+1},\theta^*)$. By {the monotonicity of
		$F(\bullet;\theta^*)$} {over} $X$, it follows that
\begin{align*}
(F(z_{k+1},\theta^*)-F(x^*,\theta^*))^T(z_{k+1}-x^*)\geq 0,
\end{align*}
and since $x^*\in {X^*}$, the above inequality can be simplified to
$ F(z_{k+1},\theta^*)^T(z_{k+1}-x^*)\geq 0.$
Hence, by adding and subtracting $x_{k+1}$ in the above inequality, we obtain that
$$F(z_{k+1};\theta^*)^T(z_{k+1}-x_{k+1})+F(z_{k+1};\theta^*)^T(x_{k+1}-x^*)\geq 0,$$
which implies
$$ F(z_{k+1};\theta^*)^T(z_{k+1}-x_{k+1})\geq F(z_{k+1};\theta^*)^T(x^*-x_{k+1}).$$
Using this relation in ~\eqref{proof:extra_ineq1}, we see that
\begin{align*}
\|x_{k+1}-x^*\|^2&\leq \|x_k-x^*\|^2-\|x_{k+1}-x_k\|^2+2\tau F(z_{k+1},\theta^*)^T(z_{k+1}-x_{k+1}) + 2\tau r_{k+1}^T(x^*-x_{k+1})\\
&\leq \|x_k-x^*\|^2-\|x_{k+1}-x_k\|^2-2\tau F(z_{k+1},\theta^*)^T(x_{k+1}-z_{k+1}) + 2\tau r_{k+1}^T(x^*-x_{k+1}).
\end{align*}
By writing $x_{k+1}-x_k=(x_{k+1}-z_{k+1})+(z_{k+1}-x_k)$, we can expand $\|x_{k+1}-x_k\|^2$ as follow:
\begin{align*}
\|x_{k+1}-x_k\|^2&=\|(x_{k+1}-z_{k+1})+(z_{k+1}-x_k)\|^2\\
&=\|x_{k+1}-z_{k+1}\|^2+\|z_{k+1}-x_k\|^2-2(x_k-z_{k+1})^T(x_{k+1}-z_{k+1}).
\end{align*}
By combining the terms in the inner product with $x_{k+1}-z_{k+1}$, we obtain
\begin{align}
\|x_{k+1}-x^*\|^2 &\leq \|x_k-x^*\|^2-\|x_{k+1}-z_{k+1}\|^2-\|z_{k+1}-x_k\|^2\notag\\&+2(x_{k+1}-z_{k+1})^T(x_k-\tau F(z_{k+1},\theta^*)-z_{k+1})+2\tau r_{k+1}^T(x^*-x_{k+1}).\label{proof:extra ineq2}
\end{align}
Through the addition and subtraction of terms,
 $(x_{k+1}-z_{k+1})^T(x_k-\tau
		F(z_{k+1},\theta^*)-z_{k+1})$ as follows:
\begin{align*}
(x_{k+1}-z_{k+1})^T(x_k-\tau F(z_{k+1},\theta^*)-z_{k+1}) &
=(x_{k+1}-z_{k+1})^T(x_k-\tau F(x_k,\theta_k)-z_{k+1}) \\ &+\tau
(x_{k+1}-z_{k+1})^T(F(x_k,\theta_k)-F(z_{k+1},\theta_k)) \\ &+\tau(x_{k+1}-z_{k+1})^T(F(z_{k+1},\theta_k)-F(z_{k+1},\theta^*)).
\end{align*}
{Since $x_{k+1}\in X$ and $z_{k+1}=\Pi_X(x_k-\tau F(x_k,\theta_k))$, the
first term on the right hand side is nonpositive by the projection
property. By leveraging this property and the Lipschitz continuity} of $F(\bullet,\theta^*)$ in $x$, we have
\begin{align}
& \ \quad(x_{k+1}-z_{k+1})^T(x_k-\tau F(z_{k+1},\theta^*)-z_{k+1})\notag\\
&\leq \tau (x_{k+1}-z_{k+1})^T(F(x_k,\theta_k)-F(z_{k+1},\theta_k))+\tau(x_{k+1}-z_{k+1})^T(F(z_{k+1},\theta_k)-F(z_{k+1},\theta^*))\notag\\
&\leq \tau L_{F,x}\|x_{k+1}-z_{k+1}\|\|x_k-z_{k+1}\|+\tau r_{k+1}^T(x_{k+1}-z_{k+1})\notag\\
&\leq {1\over 2}(\|x_{k+1}-z_{k+1}\|^2+\tau^2L_{F,x}^2\|x_k-z_{k+1}\|^2)+\tau r_{k+1}^T(x_{k+1}-z_{k+1}).\label{proof:extra ineq3}
\end{align}
From the Lipschitz continuity of $F(x,\theta)$ in $\theta$, {it
	follows that}
	$\|r_{k+1}\|=\|F(z_{k+1},\theta_k)-F(z_{k+1},\theta^*)\|\leq
	L_{F,\theta}\|\theta_k-\theta^*\|$. {By employing this bound and
		by substituting ~\eqref{proof:extra ineq3} in
			~\eqref{proof:extra ineq2}}, {the result follows.}
\begin{align*}
\|x_{k+1}-x^*\|^2&\leq
\|x_k-x^*\|^2-\|x_{k+1}-z_{k+1}\|^2-\|z_{k+1}-x_k\|^2+\|x_{k+1}-z_{k+1}\|^2+\tau^2{L_{F,x}}^2\|x_k-z_{k+1}\|^2\\&+2\tau \|r_{k+1}\|\|x^*-z_{k+1}\|\\
&=\|x_k-x^*\|^2-(1-\tau^2{L_{F,x}}^2)\|z_{k+1}-x_k\|^2+2\tau \|r_{k+1}\|\|x^*-z_{k+1}\|\\
&=\|x_k-x^*\|^2-(1-\tau^2L_{F,x}^2)\|z_{k+1}-x_k\|^2+2\tau L_{F,\theta}\|\theta_{{k}}-\theta^*\|\|x^*-z_{k+1}\|.
\end{align*}
\end{proof}

We now leverage this result in proving the convergence of the iterates
produced by Algorithm~\ref{alg4}.
\begin{theorem}[{\bf Convergence of extragradient scheme}]
Let Assumption ~\ref{assump:F lips cont} holds and {$\Theta$ is bounded}. In addition, assume that
stepsize $\gamma_{g,k}$ is fixed at $\gamma_g$, where $\gamma_g<
{2\over G_g}$. Let $\{x_k,\theta_k\}$ be the sequence generated by
Algorithm ~\ref{alg4} with {$$\tau^2  < {1\over
	L^2_{F,x}+2L_{F,\theta} \|\theta_0-\theta^*\|}.$$} Then $\{x_k\}$
converges to a point in $X^*$ and $\{\theta_k\}$ converges to {$\theta^*\in \Theta$} as $k\to \infty$.
\end{theorem}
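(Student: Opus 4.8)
The plan is to recast the one-step bound of Lemma~\ref{lemma:extra bound} into the form $u_{k+1}\le(1+\beta_k)u_k-\upsilon_k+\alpha_k$ demanded by Lemma~\ref{polyak2}, taking $u_k=\|x_k-x^*\|^2$ for a fixed $x^*\in X^*$. First I would pin down the learning sequence: since $g$ is strongly convex with $G_g$-Lipschitz gradient and $\gamma_g<2/G_g$, the map $\theta\mapsto\Pi_{\Theta}(\theta-\gamma_g\nabla_{\theta} g(\theta))$ is a contraction, so there is a $q_g\in(0,1)$ with $\|\theta_k-\theta^*\|\le q_g^k\|\theta_0-\theta^*\|$. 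In particular $\|\theta_k-\theta^*\|\le\|\theta_0-\theta^*\|$ for every $k$, both $\sum_k\|\theta_k-\theta^*\|$ and $\sum_k\|\theta_k-\theta^*\|^2$ are finite, and $\theta_k\to\theta^*$.

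The crux is the cross term $2\tau L_{F,\theta}\|\theta_k-\theta^*\|\,\|x^*-z_{k+1}\|$ in Lemma~\ref{lemma:extra bound}, which I would split by the triangle inequality $\|x^*-z_{k+1}\|\le\|x^*-x_k\|+\|x_k-z_{k+1}\|$. The piece carrying $\|x^*-x_k\|$ I would bound via $2ab\le a(b^2+1)$ with $a=\tau L_{F,\theta}\|\theta_k-\theta^*\|$ and $b=\|x^*-x_k\|$; this produces a multiplicative factor $\beta_k:=\tau L_{F,\theta}\|\theta_k-\theta^*\|$ on $u_k$ together with an additive term of the same size, both summable by the previous paragraph. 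The delicate piece is the one carrying $\|x_k-z_{k+1}\|$, which must be absorbed into the negative term $-(1-\tau^2L_{F,x}^2)\|z_{k+1}-x_k\|^2$. Here I would apply the \emph{weighted} Young inequality $2ab\le a^2/c+cb^2$ with $b=\|x_k-z_{k+1}\|$, $a=\tau L_{F,\theta}\|\theta_k-\theta^*\|$, and the weight tuned precisely to $c=2\tau^2L_{F,\theta}\|\theta_0-\theta^*\|$, giving $2\tau L_{F,\theta}\|\theta_k-\theta^*\|\,\|x_k-z_{k+1}\|\le L_{F,\theta}\|\theta_k-\theta^*\|^2/(2\|\theta_0-\theta^*\|)+2\tau^2L_{F,\theta}\|\theta_0-\theta^*\|\,\|x_k-z_{k+1}\|^2$. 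The first summand is summable, and the second combines with the negative term to leave the coefficient $1-\tau^2L_{F,x}^2-2\tau^2L_{F,\theta}\|\theta_0-\theta^*\|$, which is strictly positive \emph{exactly} under the hypothesis $\tau^2<1/(L_{F,x}^2+2L_{F,\theta}\|\theta_0-\theta^*\|)$. This choice of the Young weight — and the attendant factor of $2$ in the steplength bound — is the main obstacle, and it is the step that reveals why the prescribed range for $\tau$ is the natural one. (If $\theta_0=\theta^*$ the cross term vanishes and the standard extragradient condition $\tau L_{F,x}<1$ suffices.)

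Collecting the estimates yields $\|x_{k+1}-x^*\|^2\le(1+\beta_k)\|x_k-x^*\|^2-\upsilon_k+\alpha_k$ with $\upsilon_k:=(1-\tau^2L_{F,x}^2-2\tau^2L_{F,\theta}\|\theta_0-\theta^*\|)\|z_{k+1}-x_k\|^2\ge0$ and $\alpha_k,\beta_k$ summable. Lemma~\ref{polyak2} then gives that $\|x_k-x^*\|^2$ converges (so $\{x_k\}$ is bounded) and that $\sum_k\upsilon_k<\infty$, whence $\|z_{k+1}-x_k\|\to0$. To finish I would pass to a convergent subsequence $x_{k_j}\to\bar x\in X$ (using closedness of $X$); then $z_{k_j+1}\to\bar x$ as well, and letting $j\to\infty$ in $z_{k+1}=\Pi_X(x_k-\tau F(x_k;\theta_k))$, using continuity of $F$, continuity of the projection, and $\theta_k\to\theta^*$, yields the fixed-point relation $\bar x=\Pi_X(\bar x-\tau F(\bar x;\theta^*))$, i.e.\ $\bar x\in X^*$. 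Since the one-step bound holds verbatim with $x^*$ replaced by this $\bar x$, the sequence $\|x_k-\bar x\|$ converges, and because it tends to $0$ along the subsequence the whole sequence $\{x_k\}$ converges to $\bar x\in X^*$; the convergence $\theta_k\to\theta^*$ was already established.
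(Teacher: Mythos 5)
Your proof is correct and shares the paper's overall architecture: both start from Lemma~\ref{lemma:extra bound}, split the cross term via $\|x^*-z_{k+1}\|\le\|x_k-z_{k+1}\|+\|x^*-x_k\|$, massage the recursion into the form required by Lemma~\ref{polyak2}, and finish with the subsequence/fixed-point/Opial-type argument. The substantive difference is in how the piece $2\tau L_{F,\theta}\|\theta_k-\theta^*\|\,\|x_k-z_{k+1}\|$ is absorbed. The paper applies the crude bound $2a\le a^2+1$ to each factor, which yields a $k$-dependent coefficient $1-\tau^2\bigl(L_{F,x}^2+L_{F,\theta}\|\theta_0-\theta^*\|q_g^{k-1}\bigr)$ on $\|z_{k+1}-x_k\|^2$ and only requires the weaker condition $\tau^2<1/(L_{F,x}^2+L_{F,\theta}\|\theta_0-\theta^*\|)$ --- so the factor of $2$ in the theorem's stated steplength bound is slack in the paper's own argument. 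Your weighted Young inequality with weight $c=2\tau^2L_{F,\theta}\|\theta_0-\theta^*\|$ instead produces the constant coefficient $1-\tau^2(L_{F,x}^2+2L_{F,\theta}\|\theta_0-\theta^*\|)$, which is positive precisely under the stated hypothesis; this both explains where the factor of $2$ comes from and gives $\sum_k\|z_{k+1}-x_k\|^2<\infty$ directly, hence $\|z_{k+1}-x_k\|\to0$ for the full sequence rather than the paper's $\liminf$ statement. Your endgame is also more careful than the paper's: the paper asserts $x_k\to\bar x$ immediately from convergence of $\|x_k-x^*\|$, whereas you correctly extract a convergent subsequence first, identify its limit as a point of $X^*$, and only then rerun the one-step bound at that limit to upgrade to whole-sequence convergence. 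The only point to watch is the degenerate case $\theta_0=\theta^*$, where your Young weight vanishes, but you flag and dispose of that correctly.
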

\begin{proof}
From Lemma ~\ref{lemma:extra bound}, we have
$$\|x_{k+1}-x^*\|^2\leq \|x_k-x^*\|^2-(1-\tau^2L_{F,x}^2)\|z_{k+1}-x_k\|^2+2\tau L_{F,\theta}\|\theta_{{k}}-\theta^*\|\|x^*-z_{k+1}\|,$$
where $x^*$ is any point in  $X^*$. By writing
$x^*-z_{k+1}=(x_k-z_{k+1}) +  (x^*-x_k)$ and using the triangle inequality, we obtain that
\begin{align*}
\|x_{k+1}-x^*\|^2&\leq \|x_k-x^*\|^2-(1-\tau^2L_{F,x}^2)\|z_{k+1}-x_k\|^2+2\tau L_{F,\theta}\|\theta_{k}-\theta^*\|(\|x_k-z_{k+1}\|+\|x^*-x_k\|)\\
&\leq  \|x_k-x^*\|^2-(1-\tau^2L_{F,x}^2)\|z_{k+1}-x_k\|^2\\&+
L_{F,\theta}\|\theta_{{k}}-\theta^*\|(\tau^2\|x_k-z_{k+1}\|^2+\tau^2\|x^*-x_k\|^2 +
		2),
\end{align*}
 from {$2a\leq a^2 + {1}.$} By strong convexity of function $g$, there exist a constant $q_g\in(0,1)$ such that $\|\theta_{k}-\theta_0\|\leq q_g^{k-1}\|\theta_0-\theta^*\|$. By replacing this bound into the above inequality and then combining the similar terms, we get
{\begin{align}
& \quad \ \|x_{k+1}-x^*\|^2 \notag \\
&\leq
\|x_k-x^*\|^2-(1-\tau^2L_{F,x}^2)\|z_{k+1}-x_k\|^2+
L_{F,\theta}q_g^{k-1}\|\theta_0-\theta^*\|\left(\tau^2\|x_k-z_{k+1}\|^2+\tau^2\|x^*-x_k\|^2 +
		2\right)\notag\\
&\leq (1+\tau^2
		L_{F,\theta}\|\theta_0-\theta^*\|q_g^{k-1})\|x_k-x^*\|^2\notag-(1-\tau^2(L_{F,x}^2+
				L_{F,\theta}\|\theta_0-\theta^*\|q_g^{k-1}))\|z_{k+1}-x_k\|^2
	\notag	\\
&  +2L_{F,\theta}q_g^{k-1}\|\theta_0-\theta^*\| .\label{proof:extra ineq 4}
\end{align}}
To prove that the sequence $\{x_k\}$ converges to a point in $X^*$, we make use of Lemma ~\ref{polyak2}.  To check that conditions of Lemma are satisfied, we first see that
$$\sum_{k=1}^\infty\tau^2
		L_{F,\theta}\|\theta_0-\theta^*\|q_g^{k-1}  \leq \frac{{\tau^2}
			L_{F,\theta}\|\theta_0-\theta^*\|}{1-q_g}<\infty  \mbox{ and
				}
			{\sum_{k=1}^\infty2L_{F,\theta}q_g^{k-1}\|\theta_0-\theta^*\| \leq \frac{2L_{F,\theta}\|\theta_0-\theta^*\|}{1-q_g} <\infty.}$$
			In addition, $\tau$ satisfies the following for every $k$:
$$ \tau^2 <    \frac{1}{L_{F,x}^2 + L_{F,\theta} \|\theta_0-\theta^*\|}
\leq \frac{1}{L_{F,x}^2 + L_{F,\theta} \|\theta_0-\theta^*\| q_g^{k-1}}
. $$
Consequently, {$(1-\tau^2(L_{F,x}^2+
				L_{F,\theta}\|\theta_0-\theta^*\|q_g^{k-1})) > 0$} {for all $k>0$}.
Then, by Lemma ~\ref{polyak2}, we have that (i)  $\{\|x_k-x^*\|\}$ is a
convergent sequence and (ii) $\sum_{k=1}^\infty (1-\tau^2(L_{f,x}^2 +
		L_{f,\theta} \|\theta_0-\theta^*\| q_g^{k-1}))\|z_{k+1}-x_k\|^2 <
\infty$. By (i), $\{x_k\} \to \bar x$ as $k \to \infty$ where $\bar x$
is not necessarily a point in $X^*$. Since (ii) holds and by observing
that $\sum_{k=1}^\infty (1-\tau^2(L_{f,x}^2 +
		L_{f,\theta} \|\theta_0-\theta^*\| q_g^{k-1})) = \infty$, it follows
that $\liminf_{k \to \infty} \|z_{k+1}-x_k\| = 0.$  Consequently, we
have that for some subsequence ${\cal K}$,
\begin{align*}
 \bar x = \lim_{{\cal K} \ni k\to\infty} x_k= \lim_{{\cal K} \ni k\to\infty}
z_{k+1}=\lim_{{\cal K} \ni k\to\infty} \Pi_X(x_k-\tau
		F(x_k;\theta_k))=\Pi_X(\bar{x}-\tau F(\bar{x},\theta^*)).
\end{align*}
This implies that $\bar x$ is a point in $X^*$. But since $\{x_k\}$ is a
convergent sequence, the entire sequence converges to $\bar x$ and the
result follows.
\end{proof}

{\bf Remark:} It can be observed that if $\theta_0 = \theta^*$, then  we
recover the standard bound on the steplength for extragradient schemes. While we do not analyze the rate of extragradient schemes, we
	believe that analogous rate statements may be possible, akin to
		those provided by
		Nemirovski~\cite{Nemirovski:2005:PRC:1029075.1039910}.
\subsection{Regularized schemes for monotone VIs}\label{subsecc:Tikhonov VI }
{Consider a perfectly specified problem VI$(X,F)$, where $F$ is a
monotone map over a set $X \subseteq \Real^n$ and assume that $x^*$
denotes its least square norm solution. Consider the
$\epsilon$-regularized problem, denoted by
		VI$(X,F+\epsilon {\bf I})$, where $\epsilon$ is a positive
constant and ${\bf I}$ is an identity map. Since the
map $F+\epsilon I$ is strongly monotone as a consequence of the
regularization, VI$(X,F+\epsilon {\bf I})$ admits a unique
solution. This motivates the {\em exact} Tikhonov regularization
	method that generates a sequence $\{z_k\}$ where
$z_k$ solves VI$(K,F+ \epsilon_k {\bf I})$, $\epsilon_k$ denotes the
regularization at the k$^{th}$ iteration, and $\epsilon_k \to 0$ as $k
\to \infty$.  Under suitable conditions
(see ~\cite[Ch.12]{tikhonov63solution,tikhonov76methods,facchinei02finite}) the sequence $\left\{z_k
\right\}$ converges to $z^{*}$ as $\epsilon_k \rightarrow 0.$
The standard structure of the Tikhonov regularization scheme
requires obtaining exact or increasingly exact solutions of the
	subproblems VI$(X,F+\epsilon_k {\bf I})$, a relatively costly
	process. An alternative lies in taking a simple projected gradient
	step on the regularized map~\cite{Polyak87} and updating the
	regularization and steplength sequence at appropriate rates. This
	framework appears to have been first mentioned in
	~\cite{golshtein89modified} and further analyzed in
	~\cite{KannanS12_SIAM} and is often referred to as {\em iterative}
Tikhonov regularization and defined as follows:
$$ x_{k+1}:=\Pi_X\left(x_k- \gamma_{k}(F(x_k)+\epsilon_k x_k)\right) \quad  \forall k>0,$$
where $\gamma_k$ and $\epsilon_k$ are two vanishing sequences satisfying
certain requirements. The reader can refer to ~\cite{KannanS12_SIAM} for
more details. Inspired by this framework,  we introduce
a class of (Tikhonov) regularized schemes for the solution of
misspecified monotone variational inequality problems:

\begin{algorithm}[{\bf A regularized projection scheme}]
\label{alg3}
Given an $x_0 \in X$ and $\theta_0 \in \Theta$ and sequences
	$\{\gamma_{f,k}, {\gamma_{g,k}\}}$ and $\{\epsilon_k\}$,
\begin{align}
x_{k+1} &:=\Pi_X\left(x_k- \gamma_{f,k}(F(x_k,\theta_k)+\epsilon_k
			x_k)\right) \quad  & \forall k>0,
	\tag{Var$(\theta_k,\epsilon_k)$}\\
\theta_{k+1} &:=\Pi_{\Theta}\left(\theta_k-
		\gamma_{g,k}\nabla_{\theta}g(\theta_k)\right) \ \quad \qquad
\qquad & \forall k>0. \tag{Learn}
\end{align}
\end{algorithm}
{In our analysis}, we consider {two auxiliary} sequences $\{x^t_k\}$ and
$\{z^t_k\}$, defined as follows:
\begin{align}
x^t_k & :=\Pi_X(x^t_k- \gamma_{f,k}(F(x^t_k,\theta_k)+\epsilon_k x^t_k))\quad \forall k>0,
\tag{Tik$(\theta_k)$}\\
z^t_k & :=\Pi_X(z^t_k- \gamma_{f,k}(F(z^t_k,\theta^*)+\epsilon_k
			z^t_k)) \ \quad \forall k>0. \tag{Tik$(\theta^*)$}
\end{align}
Note that $\{x_k^t\}$ denotes the Tikhonov sequence associated with an
estimate of $\theta^*$, given by $\theta_k$, and each iterate represents
	the solution of the regularized problem
	VI$(X,F(\bullet;\theta_k)+\epsilon_k {\bf I})$. The iterate $x_k^t$
	can be viewed as a solution to a fixed-point problem, an alternative
	avenue for stating that $x_k^t$ is a solution of
	VI$(X,F(\bullet;\theta_k)+\epsilon_k {\bf I})$.  Analogously
	$\{z_k^t\}$ represents a  sequence in which each iterate is a
	solution to the regularized problem
	VI$(X,F(\bullet;\theta^*)+\epsilon_k {\bf I}).$ In what follows, we
		present a series of Lemmas that will be used to prove the
			convergence of the sequence $\{x_k\}$ to the least-norm solution of problem $\cal V(\theta^*)$. The proof sketch is as follows:
In Lemma~\ref{Lemma:tikh_1}, we relate $\{x^t_k\}$ with $\{z^t_k\}$ and show that as $\theta_k$ converges to $\theta^*$, $\{x^t_k\}$ converges to $\{z^t_k\}$. Lemmas ~\ref{Lemma:tikh_2}, ~\ref{Lemma:tikh_3} and ~\ref{Lemma:tikh_4}, when combined, show that as $k\to \infty$, the
iterative Tikhonov sequence $\{x_k\}$ converges to the sequence
$\{x_k^t\}$, by first deriving the bound on $\|x_k-x_k^t\|$ and then
showing that this bound goes to zero. Consequently, convergence of
$\{x_k\}$ to the least norm solution will be immediate since we know
that $\|x_k^t-z_k^t\| \to 0$ as $k \to \infty$ and  $\{z_k^t\}$
converges to the least norm solution of problem $\cal V(\theta^*)$. {We
make the following assumptions on the set $X$ and also on the stepsize and regularization sequences:
\begin{assumption}
\label{assump:VI_X_compact}
The set $X$ is compact and $\sup_{x\in X} \|x\|\leq M$, where M is a constant.
\end{assumption}}
\begin{assumption}
\label{assump:tikh stepsize}
The following hold:
\be
\item[(a)] $0<\gamma_{f,k}\leq \frac{\epsilon_k}{(L_{F,x}+\epsilon_k)^2}
{\leq \frac{\epsilon_0}{L_{F,x}^2}}$ for
all $k$;
\item[(b)] {$ \gamma_{f,k} \epsilon_k < 1$ and $\sum_{k=1}^\infty
	\gamma_{f,k}\epsilon_k =\infty$};
\item[(c)] $ \lim_{k\to
	\infty}\frac{|\epsilon_{k-1}-\epsilon_k|}{\gamma_{f,k}\epsilon^2_k}=0$;
\item[(d)] $\gamma_{g,k} \triangleq \gamma_g$ such that {$\gamma_g< 2/G_g$}, {$\lim_{k\to
	\infty}\frac{q_g^{k}}{\epsilon_k}=0$} and $\lim_{k\to
	\infty}\frac{q_g^{k-1}}{\gamma_{f,k}\epsilon^2_k}=0$, where {$q_g\triangleq \sqrt{1-\gamma_{g}\eta_g(2-\gamma_{g}{G_g})}$}.
\ee
\end{assumption}

\begin{lemma} \label{Lemma:tikh_1}
Let Assumptions ~\ref{assump:F lips cont} and ~\ref{assump:tikh stepsize} hold. Consider the sequences $\{x_k^t\}$ and $\{z_k^t\}$  generated by $(\mathrm{Tik}(\theta^*))$ and $(\mathrm{Tik}(\theta_k))$. Then, $\|x_k^t-z^t_k\| \to 0$ as $k\to \infty$.
\end{lemma}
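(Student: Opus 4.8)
The plan is to exploit that, for any positive steplength $\gamma_{f,k}$, the fixed-point relations $(\mathrm{Tik}(\theta_k))$ and $(\mathrm{Tik}(\theta^*))$ are \emph{equivalent} to the statements that $x_k^t$ solves VI$(X, F(\bullet;\theta_k)+\epsilon_k {\bf I})$ and $z_k^t$ solves VI$(X, F(\bullet;\theta^*)+\epsilon_k {\bf I})$, respectively. Since $F(\bullet;\theta)$ is monotone (Assumption~\ref{assump:F lips cont}(b)) and the added term $\epsilon_k {\bf I}$ makes each map $\epsilon_k$-strongly monotone, both regularized problems admit unique solutions, so $x_k^t$ and $z_k^t$ are well defined. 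I would then bound their distance in terms of the parameter mismatch $\|\theta_k-\theta^*\|$ relative to $\epsilon_k$.

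The first concrete step is to write down the two variational inequalities and test each against the other's solution: taking $y=z_k^t$ in the VI characterizing $x_k^t$ and $y=x_k^t$ in the VI characterizing $z_k^t$ gives
\begin{align*}
(z_k^t - x_k^t)^T \big(F(x_k^t,\theta_k) + \epsilon_k x_k^t\big) & \geq 0, \\
(x_k^t - z_k^t)^T \big(F(z_k^t,\theta^*) + \epsilon_k z_k^t\big) & \geq 0.
\end{align*}
Adding these and isolating the regularization contribution $-\epsilon_k\|x_k^t-z_k^t\|^2$ yields
\begin{align*}
(z_k^t - x_k^t)^T \big(F(x_k^t,\theta_k) - F(z_k^t,\theta^*)\big) \geq \epsilon_k \|x_k^t - z_k^t\|^2.
\end{align*}

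Next I would split $F(x_k^t,\theta_k) - F(z_k^t,\theta^*) = \big(F(x_k^t,\theta_k) - F(z_k^t,\theta_k)\big) + \big(F(z_k^t,\theta_k) - F(z_k^t,\theta^*)\big)$. Monotonicity of $F(\bullet;\theta_k)$ in $x$ forces the inner product of $(z_k^t-x_k^t)$ with the first bracket to be nonpositive, so it can be dropped from the left-hand side, leaving $(z_k^t-x_k^t)^T\big(F(z_k^t,\theta_k)-F(z_k^t,\theta^*)\big) \geq \epsilon_k\|x_k^t-z_k^t\|^2$. Applying Cauchy--Schwarz together with Lipschitz continuity of $F$ in $\theta$ bounds the left-hand side by $L_{F,\theta}\|\theta_k-\theta^*\|\,\|x_k^t-z_k^t\|$, so after cancelling one factor of $\|x_k^t-z_k^t\|$ (the case $\|x_k^t-z_k^t\|=0$ being trivial),
\begin{align*}
\|x_k^t - z_k^t\| \leq \frac{L_{F,\theta}\,\|\theta_k - \theta^*\|}{\epsilon_k}.
\end{align*}

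Finally, since $g$ is strongly convex with the learning step fixed at $\gamma_g<2/G_g$, the learning iterates obey the geometric estimate $\|\theta_k-\theta^*\| \leq q_g^k\|\theta_0-\theta^*\|$ (cf.~\eqref{geom_cr}), whence $\|x_k^t-z_k^t\| \leq L_{F,\theta}\|\theta_0-\theta^*\|\,q_g^k/\epsilon_k$, and the claim follows from $\lim_{k\to\infty} q_g^k/\epsilon_k = 0$ guaranteed by Assumption~\ref{assump:tikh stepsize}(d). The one delicate point — and the reason the argument is not entirely routine — is that the bound is governed by the \emph{ratio} $\|\theta_k-\theta^*\|/\epsilon_k$ rather than $\|\theta_k-\theta^*\|$ alone: the regularization $\epsilon_k$ vanishes even as it is what separates the two regularized maps, so the whole conclusion hinges on the learning error decaying faster than $\epsilon_k$, which is precisely why the rate hypothesis $q_g^k/\epsilon_k\to 0$ is imposed.
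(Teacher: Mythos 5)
Your proof is correct and follows essentially the same route as the paper's: test the two regularized variational inequalities against each other's solutions, add them, split the map difference into a monotonicity term (dropped) and a Lipschitz-in-$\theta$ term, and conclude via $\|x_k^t-z_k^t\|\leq L_{F,\theta}\|\theta_k-\theta^*\|/\epsilon_k$ together with $q_g^k/\epsilon_k\to 0$. The only (immaterial) difference is your choice of pivot term $F(z_k^t,\theta_k)$ rather than the paper's $F(x_k^t,\theta^*)$, so you invoke monotonicity of $F(\bullet;\theta_k)$ instead of $F(\bullet;\theta^*)$; both are covered by Assumption~\ref{assump:F lips cont}(b).
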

\begin{proof}
By the definition of $x_k^t$, we have the following:
$ (z_k^t-x_k^t)^T(F(x_k^t;\theta_k)+\epsilon_k x_k^t) \geq 0. $
Similarly, we have the following:
$ (x_k^t-z_k^t)^T(F(z_k^t;\theta^*)+\epsilon_k z_k^t) \geq 0. $
By adding the two inequalities, we obtain the following:
$$ (z_k^t-x_k^t)^T(F(x_k^t;\theta_k)-F(z_k^t;\theta^*)) \geq
\epsilon_k\|x_k^t-z_k^t\|^2. $$
By using monotonicity of $F(\bullet;\theta^*)$ and Lipschitz continuity of $F$, this inequality can be recast as follows:
\begin{align*}
\epsilon_k\|x_k^t-z_k^t\|^2 & \leq
(z_k^t-x_k^t)^T(F(x_k^t;\theta_k)-F(z_k^t;\theta^*)) \\
					& =
					(z_k^t-x_k^t)^T(F(x_k^t;\theta_k)-F(x_k^t;\theta^*)+F(x_k^t;\theta^*)-F(z_k^t;\theta^*)) \\
			& =
			(z_k^t-x_k^t)^T(F(x_k^t;\theta_k)-F(x_k^t;\theta^*))
			 +\underbrace{(z_k^t-x_k^t)^T(F(x_k^t;\theta^*)-F(z_k^t;\theta^*))}_{\leq
				 0}\\
			 & \leq
			(z_k^t-x_k^t)^T(F(x_k^t;\theta_k)-F(x_k^t;\theta^*))
			 \leq \|z_k^t-x_k^t\| L_{F,\theta} \|\theta_k-\theta^*\|.
\end{align*}
It can then be concluded that
$$ \|z_k^t -x_k^t\| \leq
\frac{L_{F,\theta}}{\epsilon_k}\|\theta_k-\theta^*\| \leq \frac{L_{F,\theta}}{\epsilon_k}q_g^k\|\theta_0-\theta^*\|,$$
where the second inequality is a consequence of the strong convexity of
$g$, by which  $\theta_k$ converges
to $\theta^*$ at a geometric rate {$q_g\triangleq \sqrt{1-\gamma_{g}G_g(2-\gamma_{g}{G_g})}$}. Using Assumption~\ref{assump:tikh stepsize}(d), it follows that
$ \lim_{k \to \infty} \|z_k^t-x_k^t\| = 0. $
   \end{proof}

{We now develop a bound on $\|x_k^t - x_{k-1}^t\|$ in terms of the
	regularization parameters $\epsilon_k$ and $\epsilon_{k-1}$ and the
		estimates $\theta_k$ and $\theta_{k-1}$.
\begin{lemma}\label{Lemma:tikh_2}
	{Let Assumptions ~\ref{assump:F lips cont}, ~\ref{assump:VI_X_compact} and ~\ref{assump:tikh stepsize}(d) hold}. Suppose $x_k^t$ and $x_{k-1}^t$ are defined by Tik$(\theta_k)$ and
	Tik$(\theta_{k-1})$ respectively. Then, we have that
	$\|x_k^t-x_{k-1}^t\|$ can be bounded as follows:
$$\|x_k^t - x_{k-1}^t\|  \leq
\frac{L_{{F},\theta}q_g^{k-1}C_g}{\epsilon_k} + \frac{M}{\epsilon_k}
|\epsilon_{k-1}-\epsilon_{k}|,$$
{where {$q_g\triangleq \sqrt{1-\gamma_{g}G_g(2-\gamma_{g}{G_g})}$},
	$C_g\triangleq \|\theta_0-\theta^*\|(1+q_g) $, and $M$ is defined in
		Assumption~\ref{assump:VI_X_compact}. }
\end{lemma}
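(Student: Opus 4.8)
The plan is to mirror the argument of Lemma~\ref{Lemma:tikh_1}, now comparing two regularized problems that differ in \emph{both} the belief and the regularization parameter. First I would record the variational characterizations of the two fixed points. Since $x_k^t$ solves VI$(X,F(\bullet;\theta_k)+\epsilon_k\mathbf{I})$ and $x_{k-1}^t$ solves VI$(X,F(\bullet;\theta_{k-1})+\epsilon_{k-1}\mathbf{I})$, testing each inequality against the other iterate gives
\begin{align*}
(x_{k-1}^t-x_k^t)^T(F(x_k^t;\theta_k)+\epsilon_k x_k^t) & \geq 0, \\
(x_k^t-x_{k-1}^t)^T(F(x_{k-1}^t;\theta_{k-1})+\epsilon_{k-1} x_{k-1}^t) & \geq 0.
\end{align*}
Writing $d\triangleq x_k^t-x_{k-1}^t$ and adding these yields $d^T[(F(x_{k-1}^t;\theta_{k-1})-F(x_k^t;\theta_k))+(\epsilon_{k-1}x_{k-1}^t-\epsilon_k x_k^t)]\geq 0$.

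The crux is then to split the two bracketed differences so that a coercive $-\epsilon_k\|d\|^2$ term is exposed. For the regularization term I would write $\epsilon_{k-1}x_{k-1}^t-\epsilon_k x_k^t=(\epsilon_{k-1}-\epsilon_k)x_{k-1}^t-\epsilon_k d$, and for the map term I would insert $F(x_{k-1}^t;\theta_k)$ to produce $F(x_{k-1}^t;\theta_{k-1})-F(x_{k-1}^t;\theta_k)$ plus $F(x_{k-1}^t;\theta_k)-F(x_k^t;\theta_k)$. Monotonicity of the fixed-belief map $F(\bullet;\theta_k)$ makes $d^T(F(x_{k-1}^t;\theta_k)-F(x_k^t;\theta_k))\leq 0$, so this spatial term can be dropped. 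Rearranging leaves $\epsilon_k\|d\|^2\leq d^T(F(x_{k-1}^t;\theta_{k-1})-F(x_{k-1}^t;\theta_k))+(\epsilon_{k-1}-\epsilon_k)\,d^Tx_{k-1}^t$.

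Next I would bound the right-hand side by Cauchy--Schwarz, using Lipschitz continuity of $F$ in $\theta$ (Assumption~\ref{assump:F lips cont}(b)) for the first term and $\|x_{k-1}^t\|\leq M$ (Assumption~\ref{assump:VI_X_compact}) for the second, obtaining $\epsilon_k\|d\|^2\leq \|d\|\big(L_{F,\theta}\|\theta_{k-1}-\theta_k\|+M|\epsilon_{k-1}-\epsilon_k|\big)$. Dividing by $\|d\|$ (the claim is trivial when $d=0$) and then by $\epsilon_k$ gives the desired estimate with $\|\theta_{k-1}-\theta_k\|$ in place of $q_g^{k-1}C_g$. Finally I would close the gap through the triangle inequality $\|\theta_{k-1}-\theta_k\|\leq\|\theta_{k-1}-\theta^*\|+\|\theta_k-\theta^*\|$ together with the geometric decay $\|\theta_j-\theta^*\|\leq q_g^j\|\theta_0-\theta^*\|$ guaranteed by strong convexity of $g$ (exactly as invoked in Lemma~\ref{Lemma:tikh_1}), which yields $\|\theta_{k-1}-\theta_k\|\leq q_g^{k-1}(1+q_g)\|\theta_0-\theta^*\|=q_g^{k-1}C_g$.

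The main obstacle is the algebraic bookkeeping in the splitting step: one must decompose $\epsilon_{k-1}x_{k-1}^t-\epsilon_k x_k^t$ so that exactly the strongly monotone term $-\epsilon_k\|d\|^2$ survives while the $(\epsilon_{k-1}-\epsilon_k)$ piece is isolated, and simultaneously insert the correct intermediate argument $F(x_{k-1}^t;\theta_k)$ so that monotonicity of the fixed-belief map can be invoked to discard the spatial difference. Everything else is a routine combination of Cauchy--Schwarz, the Lipschitz and boundedness hypotheses, and the already-established geometric convergence of $\{\theta_k\}$.
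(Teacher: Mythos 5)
Your proposal is correct and follows essentially the same route as the paper's proof: both add the two variational characterizations, insert the intermediate term $F(x_{k-1}^t;\theta_k)$ (and the corresponding $\epsilon_k x_{k-1}^t$ split) so that monotonicity of the fixed-belief map isolates the coercive $\epsilon_k\|x_k^t-x_{k-1}^t\|^2$ term, and then finish with Cauchy--Schwarz, the Lipschitz and boundedness hypotheses, and the triangle-inequality bound $\|\theta_k-\theta_{k-1}\|\leq q_g^{k-1}C_g$. No gaps.
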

\begin{proof}
We begin by recalling that $x_{k-1}^t$ and $x_k^t$ satisfy the following
inequalities:
\begin{align*}
(x_{k-1}^t - x_k^t)^T (F(x_k^t,\theta_k) + \epsilon_k x_k^t) \geq 0,
	\mbox{ and }
(x_{k}^t - x_{k-1}^t)^T (F(x_{k-1}^t,\theta_{k-1}) + \epsilon_{k-1}
		x_{k-1}^t) \geq 0.
\end{align*}
Adding both inequalities, we obtain that
\begin{align*}
	(x_{k-1}^t - x_k^t)^T (F(x_k^t,\theta_k) - F(x_{k-1}^t,\theta_{k-1}))  +
	(x_{k-1}^t - x_k^t)^T (\epsilon_k x_k^t - \epsilon_{k-1}
		x_{k-1}^t) \geq 0.
\end{align*}
By adding and subtracting $(x_{k-1}^t - x_k^t)^TF(x_{k-1}^t,\theta_k)$ and $(x_{k-1}^t - x_k^t)^T\epsilon_{k}
x_{k-1}^t$, we obtain the following by using the monotonicity of
$F(x,\theta)$ in $x$:
\begin{align*}
& \quad 	(x_{k-1}^t - x_k^t)^T (F(x_{k-1}^t,\theta_k) -
			F(x_{k-1}^t,\theta_{k-1}))  +
	(x_{k-1}^t - x_k^t)^T (\epsilon_k x_{k-1}^t - \epsilon_{k-1}
		x_{k-1}^t)  \\
& \geq (x_{k-1}^t - x_k^t)^T (F(x_{k-1}^t,\theta_k) -
			F(x_{k}^t,\theta_{k})) +
	\epsilon_{k} (x_{k-1}^t - x_k^t)^T (x_{k-1}^t - x_{k}^t)  \geq \epsilon_k \|x_{k-1}^t - x_k^t\|^2.
\end{align*}
Consequently, by leveraging Cauchy-Schwartz inequality and by invoking
the bound $\|x\| \leq M$, we obtain the following bound:
\begin{align*}
\epsilon_k \|x_{k-1}^t - x_k^t\|^2 & \leq L_{F,\theta} \|x_{k-1}^t - x_k^t \|\|\theta_k
- \theta_{k-1}\| + \|x_{k-1}^t\|\|x_{k-1}^t -x_k^t\|
|\epsilon_{k-1}-\epsilon_{k}| \\
\implies  \|x_{k-1}^t - x_k^t\| & \leq
\frac{L_{F,\theta}}{\epsilon_k}\|\theta_k
- \theta_{k-1}\| + \frac{\|x_{k-1}^t\|}{\epsilon_k}
|\epsilon_{k-1}-\epsilon_{k}|  \leq
\frac{L_{F,\theta}}{\epsilon_k}\|\theta_k
- \theta_{k-1}\| + \frac{M}{\epsilon_k}
|\epsilon_{k-1}-\epsilon_{k}|.
\end{align*}
Furthermore, $\|\theta_k-\theta_{k-1}\|$ can be bounded as follows:
\begin{align*}
\|\theta_k-\theta_{k-1}\| &\leq \|\theta_k-\theta^*\|+\|\theta_{k-1}-\theta^*\|
\leq q_g^k\|\theta^*-\theta_0\|+q_g^{k-1}\|\theta^*-\theta_0\| =
q_g^{k-1}C_g.
\end{align*}
The resulting bound on $\|x_k^t - x_{k-1}^t\|$ can be further simplified
as
$\|x_k^t - x_{k-1}^t\|  \leq
\frac{L_{F,\theta}q_g^{k-1}C_g}{\epsilon_k} + \frac{M}{\epsilon_k}
|\epsilon_{k-1}-\epsilon_{k}|.$
\end{proof}

Next, we proceed to derive a bound on the difference $\|x_{k+1}-x_k^t\|$.
\begin{lemma}
\label{Lemma:tikh_3}
{Let Assumptions ~\ref{assump:F lips cont}, ~\ref{assump:VI_X_compact} and ~\ref{assump:tikh stepsize}(d) hold.} Suppose $\{x_k\}$ and $\{x_k^t\}$ are sequences generated by
Algorithm ~\ref{alg3} and $(\mathrm{Tik}(\theta_k))$. Then,  $\|x_{k+1}-x^t_k\|$ can be bounded as follows:
\begin{align*}
\|x_{k+1}-x^t_k \|\leq q_k\|x_{k}-x^t_{k-1}\|+
\frac{q_kL_{f,\theta}q_g^{k-1}C_g}{\epsilon_k} + \frac{Mq_k}{\epsilon_k}
|\epsilon_{k-1}-\epsilon_{k}|,
\end{align*}
where  $q_k\triangleq
\sqrt{(1+\gamma^2_{f,k}(L_{F,x}+\epsilon_k)^2-2\gamma_{f,k}\epsilon_k)}$
{and $C_g$, $q_g$ and $M$ are constants defined in Lemma ~\ref{Lemma:tikh_2}.}
\end{lemma}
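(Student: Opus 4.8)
The plan is to recognize that both $x_{k+1}$ and $x_k^t$ arise from the \emph{same} projected, regularized gradient map, so that the distance between them contracts. Define the operator $T_k(x) \triangleq \Pi_X\bigl(x - \gamma_{f,k}(F(x,\theta_k) + \epsilon_k x)\bigr)$. By Algorithm~\ref{alg3} we have $x_{k+1} = T_k(x_k)$, while the defining relation $(\mathrm{Tik}(\theta_k))$ states precisely that $x_k^t$ is a fixed point of $T_k$, i.e. $x_k^t = T_k(x_k^t)$. Consequently $\|x_{k+1} - x_k^t\| = \|T_k(x_k) - T_k(x_k^t)\|$, and the entire lemma reduces to quantifying the contraction modulus of $T_k$ and then correcting for the fact that the right-hand side is stated in terms of $x_{k-1}^t$ rather than $x_k^t$.

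First I would establish that $T_k$ is a contraction with modulus $q_k$. By nonexpansivity of the Euclidean projector, $\|T_k(x) - T_k(y)\|^2 \leq \|(x-y) - \gamma_{f,k}\,d\|^2$, where $d \triangleq (F(x,\theta_k) + \epsilon_k x) - (F(y,\theta_k) + \epsilon_k y)$. Expanding this square produces the cross term $-2\gamma_{f,k}(x-y)^T d$ and the quadratic term $\gamma_{f,k}^2\|d\|^2$. The regularization is what makes this work: monotonicity of $F(\bullet;\theta_k)$ in $x$ (Assumption~\ref{assump:F lips cont}) gives $(x-y)^T d \geq \epsilon_k\|x-y\|^2$, i.e. $F(\bullet;\theta_k) + \epsilon_k \mathbf{I}$ is strongly monotone with modulus $\epsilon_k$, while the triangle inequality together with the Lipschitz bound $L_{F,x}$ yields $\|d\| \leq (L_{F,x} + \epsilon_k)\|x-y\|$. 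Combining, $\|T_k(x)-T_k(y)\|^2 \leq \bigl(1 - 2\gamma_{f,k}\epsilon_k + \gamma_{f,k}^2(L_{F,x}+\epsilon_k)^2\bigr)\|x-y\|^2 = q_k^2\|x-y\|^2$. The quantity under the root is strictly positive for every $\gamma_{f,k}$ since its discriminant in $\gamma_{f,k}$ is negative, and Assumption~\ref{assump:tikh stepsize}(a) further ensures $q_k < 1$, a fact reserved for the subsequent lemmas.

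Applying this contraction estimate with $x = x_k$ and $y = x_k^t$ gives $\|x_{k+1}-x_k^t\| \leq q_k\|x_k - x_k^t\|$. Since the asserted bound is phrased in terms of $\|x_k - x_{k-1}^t\|$, I would then insert $x_{k-1}^t$ via the triangle inequality, $\|x_k - x_k^t\| \leq \|x_k - x_{k-1}^t\| + \|x_{k-1}^t - x_k^t\|$, and invoke Lemma~\ref{Lemma:tikh_2} to control the drift of the Tikhonov trajectory, $\|x_{k-1}^t - x_k^t\| \leq \frac{L_{F,\theta} q_g^{k-1} C_g}{\epsilon_k} + \frac{M}{\epsilon_k}|\epsilon_{k-1}-\epsilon_k|$. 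Multiplying through by $q_k$ and collecting terms reproduces the claimed inequality.

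The argument is essentially routine and presents no serious obstacle beyond bookkeeping. The only point demanding care is the contraction step, where one must exploit the $\epsilon_k \mathbf{I}$ regularization to convert the merely monotone (hence at best nonexpansive) map $F(\bullet;\theta_k)$ into a strongly monotone one, thereby securing a genuine contraction factor $q_k$ that is \emph{the same} constant appearing in Lemma~\ref{Lemma:tikh_2}; aligning these two uses of $q_k$, and confirming the sign conventions so that $\epsilon_k\|x-y\|^2$ enters with the correct coefficient, is where a careless expansion could go astray.
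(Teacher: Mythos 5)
Your proposal is correct and follows essentially the same route as the paper: nonexpansivity of the projector combined with the strong monotonicity of $F(\bullet;\theta_k)+\epsilon_k \mathbf{I}$ and the Lipschitz bound $(L_{F,x}+\epsilon_k)$ yields $\|x_{k+1}-x_k^t\|\leq q_k\|x_k-x_k^t\|$, after which the triangle inequality and Lemma~\ref{Lemma:tikh_2} finish the argument. Packaging the computation as a contraction estimate for the fixed-point operator $T_k$ is only a cosmetic reframing of the paper's expansion.
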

\begin{proof}
We begin by bounding $\|x_{k+1}-x^t_k\|$ by leveraging
the nonexpansivity of the Euclidean projector.
\begin{align*}
\|x_{k+1}-x^t_{k}\|^2&=\|\Pi_X(x_k- \gamma_{f,k}(F(x_k;\theta_k)+\epsilon_k x_k)) - \Pi_X(x^t_k- \gamma_{f,k}(F(x^t_k;\theta_k)+\epsilon_k x^t_k))\|^2\\
&\leq \|x_k- \gamma_{f,k}(F(x_k;\theta_k)+\epsilon_k x_k) - (x^t_k- \gamma_{f,k}(F(x^t_k;\theta_k)+\epsilon_k x^t_k)\|^2\\
 & =  \|x_k-x^t_k\|^2+\gamma^2_{f,k}\|F(x_k,\theta_k)+\epsilon_k x_k-(F(x^t_k;\theta_k)+\epsilon_k
	x^t_k)\|^2 \notag\\&-2\gamma_{f,k}(x_k-x^t_k)^T(F(x_k;\theta_k)+\epsilon_k x_k-(F(x^t_k;\theta_k)+\epsilon_k x^t_k).
\end{align*}
The Lipschitzian property of $F(x;\theta)$ in
$x$ uniformly in $\theta$ and the strong monotonicity of
$(F(x;\theta)+\epsilon x)$ in $x$ uniformly in $\theta$ allows for deriving the following bound.
\begin{align*}
& \quad \|x_k-x^t_k\|^2+\gamma^2_{f,k}\|F(x_k,\theta_k)+\epsilon_k x_k-(F(x^t_k,\theta_k)+\epsilon_k
	x^t_k)\|^2 \notag\\&-2\gamma_{f,k}(x_k-x^t_k)^T(F(x_k,\theta_k)+\epsilon_k x_k-(F(x^t_k,\theta_k)+\epsilon_k x^t_k) \\
	&\leq \|x_k-x^t_k\|^2+\gamma^2_{f,k}(L_{F,x}+\epsilon_k)^2\|x_k-x^t_k\|^2-2\gamma_{f,k}\epsilon_k\|x_k-x^t_k\|^2\\
&=(1+\gamma^2_{f,k}(L_{F,x}+\epsilon_k)^2-2\gamma_{f,k}\epsilon_k)\|x_k-x^t_k\|^2,
\end{align*}
which can be simplified to
$\|x_{k+1}-x^t_{k}\| \leq q_k  \|x_k-x^t_k\|$
where $q_k\triangleq
(1+\gamma^2_{f,k}(L_{F,x}+\epsilon_k)^2-2\gamma_{f,k}\epsilon_k)^{1/2}$. By
using the triangle inequality, the above inequality can be expanded as the following:
\begin{align}
\|x_{k+1}-x^t_{k}\| &\leq q_k  \|x_k-x^t_k\|
\leq q_k  \|x_k-x^t_{k-1}\| +q_k\|x^t_k-x^t_{k-1}\|\label{tikh_lemma1_ineq1}
\end{align}
By combining ~\eqref{tikh_lemma1_ineq1} and Lemma~\ref{Lemma:tikh_2}, we obtain the following:
\begin{align*}
\|x_{k+1}-x^t_k \|\leq q_k\|x_{k}-x^t_{k-1}\|+
\frac{q_kL_{f,\theta}q_g^{k-1}C_g}{\epsilon_k} + \frac{Mq_k}{\epsilon_k}
|\epsilon_{k-1}-\epsilon_{k}|.
\end{align*}
\end{proof}
\noindent We now leverage this bound to show that $\|x-x_k^t\| \to 0$ as $k \to \infty$.
\begin{lemma}
\label{Lemma:tikh_4}
{Let Assumptions ~\ref{assump:F lips cont}, ~\ref{assump:VI_X_compact} and ~\ref{assump:tikh stepsize} hold.} Consider the sequence $\{x_k\}$ and $\{x_k^t\}$ generated by
Algorithm ~\ref{alg3} and
	$(\mathrm{Tik}(\theta_k))$, respectively. Then, $\lim_{k\to \infty} \|x_k -x_k^t\| = 0.$
\end{lemma}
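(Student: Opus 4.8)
The plan is to treat the inequality from Lemma~\ref{Lemma:tikh_3} as a scalar contraction recursion and invoke Polyak's Lemma (Lemma~\ref{polyak}). Setting $u_k \triangleq \|x_k - x_{k-1}^t\|$, Lemma~\ref{Lemma:tikh_3} reads $u_{k+1} \leq q_k u_k + \alpha_k$, where $q_k = (1+\gamma_{f,k}^2(L_{F,x}+\epsilon_k)^2 - 2\gamma_{f,k}\epsilon_k)^{1/2}$ and $\alpha_k \triangleq \frac{q_k}{\epsilon_k}\left(L_{f,\theta}C_g q_g^{k-1} + M|\epsilon_{k-1}-\epsilon_k|\right) \geq 0$. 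Once I establish $u_k \to 0$, the triangle inequality $\|x_k - x_k^t\| \leq \|x_k - x_{k-1}^t\| + \|x_{k-1}^t - x_k^t\|$, combined with $\|x_{k-1}^t - x_k^t\| \to 0$ coming from Lemma~\ref{Lemma:tikh_2}, delivers the claim.

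First I would extract a usable contraction estimate. Assumption~\ref{assump:tikh stepsize}(a) gives $\gamma_{f,k}(L_{F,x}+\epsilon_k)^2 \leq \epsilon_k$; multiplying through by $\gamma_{f,k}$ yields $\gamma_{f,k}^2(L_{F,x}+\epsilon_k)^2 \leq \gamma_{f,k}\epsilon_k$, so that $q_k^2 \leq 1 - \gamma_{f,k}\epsilon_k$. Since $\gamma_{f,k}\epsilon_k < 1$ by Assumption~\ref{assump:tikh stepsize}(b), we get $q_k < 1$, and the elementary bound $\sqrt{1-t}\leq 1 - t/2$ on $[0,1]$ gives $1-q_k \geq \tfrac12\gamma_{f,k}\epsilon_k$. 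Hence $\sum_k (1-q_k) \geq \tfrac12\sum_k \gamma_{f,k}\epsilon_k = \infty$ by Assumption~\ref{assump:tikh stepsize}(b), which is the first hypothesis of Lemma~\ref{polyak}.

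The remaining and most delicate step is to check $\alpha_k/(1-q_k)\to 0$. Using $1-q_k \geq \tfrac12\gamma_{f,k}\epsilon_k$ and $q_k \leq 1$, I would bound $\frac{\alpha_k}{1-q_k} \leq 2\left(\frac{L_{f,\theta}C_g\, q_g^{k-1}}{\gamma_{f,k}\epsilon_k^2} + \frac{M|\epsilon_{k-1}-\epsilon_k|}{\gamma_{f,k}\epsilon_k^2}\right)$, and both terms vanish exactly by the scaling limits imposed in Assumption~\ref{assump:tikh stepsize}(d) (for the $q_g^{k-1}/(\gamma_{f,k}\epsilon_k^2)$ term) and Assumption~\ref{assump:tikh stepsize}(c) (for the $|\epsilon_{k-1}-\epsilon_k|/(\gamma_{f,k}\epsilon_k^2)$ term). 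With all hypotheses of Lemma~\ref{polyak} verified, $u_k = \|x_k - x_{k-1}^t\| \to 0$. For the leftover difference I would use Lemma~\ref{Lemma:tikh_2}, namely $\|x_{k-1}^t - x_k^t\| \leq \frac{L_{F,\theta}C_g q_g^{k-1}}{\epsilon_k} + \frac{M|\epsilon_{k-1}-\epsilon_k|}{\epsilon_k}$, whose first summand tends to zero because $q_g^{k-1}/\epsilon_k = q_g^{-1}(q_g^{k}/\epsilon_k)\to 0$ by (d), and whose second summand tends to zero since $|\epsilon_{k-1}-\epsilon_k|/\epsilon_k = (\gamma_{f,k}\epsilon_k)\cdot|\epsilon_{k-1}-\epsilon_k|/(\gamma_{f,k}\epsilon_k^2)$ is the product of a factor bounded by $1$ (by (b)) and a null sequence (by (c)). The main obstacle is simply the careful bookkeeping of the $\gamma_{f,k}\epsilon_k^2$ denominators, ensuring that the inhomogeneous term $\alpha_k$ is dominated by the contraction gap $1-q_k$; the balance conditions (c)--(d) are precisely what make this domination hold.
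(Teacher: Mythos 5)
Your proposal is correct and follows essentially the same route as the paper's own proof: feed the recursion from Lemma~\ref{Lemma:tikh_3} into Lemma~\ref{polyak}, verify $\sum_k(1-q_k)=\infty$ via $1-q_k\geq\tfrac12\gamma_{f,k}\epsilon_k$ (the paper gets this from $(1-q_k)=(1-q_k^2)/(1+q_k)$ rather than $\sqrt{1-t}\leq 1-t/2$, but the bound is identical), and verify $\alpha_k/(1-q_k)\to 0$ from Assumption~\ref{assump:tikh stepsize}(c)--(d). Your explicit final triangle-inequality step converting $\|x_k-x_{k-1}^t\|\to 0$ into $\|x_k-x_k^t\|\to 0$ via Lemma~\ref{Lemma:tikh_2} is a detail the paper leaves implicit, and including it is a small improvement.
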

\begin{proof}
This requires the use of Lemma ~\ref{Lemma:tikh_3} and Lemma
	~\ref{polyak}.\\
\noindent {(i)} Under Assumption ~\ref{assump:tikh stepsize}(a), we have that
\begin{align*}
q_k  & = \sqrt{(1+\gamma^2_{f,k}(L_{F,x}+\epsilon_k)^2-2\gamma_{f,k}\epsilon_k)}
	 = \sqrt{(1-\gamma_{f,k} (2\epsilon_k  -
				\gamma_{f,k}(L_{F,x}+\epsilon_k)^2))}
	 \leq \sqrt{(1-\gamma_{f,k}\epsilon_k)}{<1,} 	
\end{align*}
{where the last inequality follows from Assumption ~\ref{assump:tikh stepsize}(b).}
Hence, we obtain the following:
\begin{align*}
\sum_{k=1}^\infty (1-q_k)  & = \sum_{k=1}^\infty \frac{(1-q^2_k)}{1+q_k}
		 \geq {1\over 2}\sum_{k=1}^\infty
						{(1-q^2_k)}{ \ \geq \ } {1\over 2}\sum_{k=1}^\infty \gamma_{f,k}\epsilon_k  =\infty,
\end{align*}
{where the last equality follows from Assumption~\ref{assump:tikh stepsize}(b).}\\
\noindent {(ii)} Under Assumption~\ref{assump:tikh stepsize}, we
obtain the following:
\begin{align*}
 & \quad \lim_{k\to\infty}
\left[\frac{q_kL_{f,\theta}q_g^{k-1}C_g}{(1-q_k)\epsilon_k} +
\frac{Mq_k}{(1-q_k)\epsilon_k}
|\epsilon_{k-1}-\epsilon_{k}| \right]\\
&= \lim_{k\to\infty}\left[ \frac{(1+q_k)q_kL_{f,\theta}q_g^{k-1}C_g}{(1-q^2_k)\epsilon_k} +
\frac{Mq_k(1+q_k)}{(1-q^2_k)\epsilon_k}
|\epsilon_{k-1}-\epsilon_{k}|  \right]\\
& \leq  \lim_{k\to\infty}\left[
\frac{(1+q_k)q_kL_{f,\theta}q_g^{k-1}C_g}{\gamma_{f,k}\epsilon^2_k} +
\frac{Mq_k(1+q_k)}{\gamma_{f,k}\epsilon^2_k}
|\epsilon_{k-1}-\epsilon_{k}| \right] \\
& \leq  \lim_{k\to\infty}\left[
\frac{2L_{f,\theta}q_g^{k-1}C_g}{\gamma_{f,k}\epsilon^2_k} +
\frac{2M}{\gamma_{f,k}\epsilon^2_k}
|\epsilon_{k-1}-\epsilon_{k}| \right]= 0,
\end{align*}
where the last inequality  follows from {Assumption ~\ref{assump:tikh
	stepsize}(b) (since
	$\gamma_{f,k} \epsilon_k < 1$ for all $k$ implying $q_k < 1$)}  and
	{the last equality is a consequence of invoking } Assumption
\ref{assump:tikh stepsize} (d) and (c).  Hence, conditions of Lemma~\ref{polyak} are met. This completes the proof.
\end{proof}

We now prove the convergence of the regularized gradient schemes by
	showing that $\|x_k^t - z_k^t\| \to 0$ as $k \to \infty$. 
\begin{theorem}[{\bf Convergence of regularized scheme}]
{Let Assumptions ~\ref{assump:F lips cont}, ~\ref{assump:VI_X_compact} and ~\ref{assump:tikh stepsize} hold.} Consider the
	sequence $\{x_k,\theta_k\}$  generated by Algorithm ~\ref{alg3}.
	Then, $\{x_k\}$ converges to $x^*$ as {$k\to\infty$}, where $x^*$ denotes the
	least-norm solution of $X^*$ and
	$\{\theta_k\}$ converges to $\theta^*\in \Theta$.
\end{theorem}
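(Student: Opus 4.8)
The plan is to reduce the convergence of the misspecified iterative scheme to the convergence of the classical \emph{exact} Tikhonov trajectory $\{z_k^t\}$, and then to close the gap between the two via a triangle inequality using the four supporting Lemmas. First I would fix $x^*$ to be the least-norm solution of $\mathcal{V}(\theta^*)$ (which exists since $X$ is compact under Assumption~\ref{assump:VI_X_compact}, so the solution set of the monotone problem $\mathcal{V}(\theta^*)$ is nonempty). Since $F(\bullet;\theta^*)$ is monotone and $\epsilon_k>0$, each $z_k^t$ is the \emph{unique} solution of the strongly monotone problem $\mathrm{VI}(X,F(\bullet;\theta^*)+\epsilon_k{\bf I})$, and because $\epsilon_k \to 0$ (the standing convention for these regularization sequences, consistent with Assumption~\ref{assump:tikh stepsize}(d)), the standard continuity of the Tikhonov trajectory gives $\|z_k^t - x^*\| \to 0$ as $k\to\infty$; I would simply cite~\cite[Ch.~12]{facchinei02finite} for this classical limit.

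Next I would assemble the three ``distance-to-trajectory'' estimates already in hand. Lemma~\ref{Lemma:tikh_1} shows that the belief-dependent Tikhonov iterate approaches its true-parameter counterpart, $\|x_k^t - z_k^t\| \to 0$, as $\theta_k \to \theta^*$. Lemma~\ref{Lemma:tikh_4}---itself obtained by feeding the recursive bound of Lemma~\ref{Lemma:tikh_3} (which in turn rests on Lemma~\ref{Lemma:tikh_2}) into Lemma~\ref{polyak}---shows that the actual iterate tracks the belief-dependent Tikhonov iterate, $\|x_k - x_k^t\| \to 0$. The triangle inequality
\[
\|x_k - x^*\| \ \leq\ \|x_k - x_k^t\| + \|x_k^t - z_k^t\| + \|z_k^t - x^*\|
\]
then forces each of the three terms, and hence the left-hand side, to zero, which establishes $x_k \to x^*$.

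Finally, convergence of the learning sequence is immediate: under Assumption~\ref{assump:F lips cont}(a), $g$ is strongly convex with a Lipschitz-continuous gradient, so the constant-steplength projected gradient update with $\gamma_g < 2/G_g$ yields the geometric bound $\|\theta_k-\theta^*\| \leq q_g^k\|\theta_0-\theta^*\|$ with $q_g\in(0,1)$ (the same bound already invoked throughout Lemmas~\ref{Lemma:tikh_1}--\ref{Lemma:tikh_4}); thus $\theta_k \to \theta^*$. The substantive work has been offloaded to the Lemmas, so the theorem itself is a short synthesis. The one point requiring care is the classical Tikhonov limit for $\{z_k^t\}$: I must confirm that Assumption~\ref{assump:tikh stepsize} indeed drives $\epsilon_k \to 0$ so that the regularization vanishes, and that the trajectory-continuity result is legitimately applicable to the monotone, Lipschitz map $F(\bullet;\theta^*)$ over the compact set $X$. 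Everything else is bookkeeping across the triangle inequality.
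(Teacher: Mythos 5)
Your proposal is correct and follows essentially the same route as the paper: the paper's proof is exactly the three-term decomposition $x_k \to x_k^t$ (Lemma~\ref{Lemma:tikh_4}), $x_k^t \to z_k^t$ (Lemma~\ref{Lemma:tikh_1}), and $z_k^t \to x^*$ via the classical Tikhonov limit cited from~\cite[Ch.~12]{facchinei02finite}, combined by the triangle inequality. Your added care about $\epsilon_k \to 0$ (which Assumption~\ref{assump:tikh stepsize} does not literally state but which the paper assumes throughout and which holds for the feasible choice $\epsilon_k=(k+1)^{-\beta}$) and the explicit treatment of $\theta_k \to \theta^*$ are reasonable refinements of the same argument.
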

\begin{proof}
From Lemma~\ref{Lemma:tikh_4}, it can be concluded that $x_k \to x^t_k$ as
$k\to\infty$. Furthermore, Lemma ~\ref{Lemma:tikh_1} guarantees that
$x^t_k \to z^t_k$ as $k\to\infty$. Moreover, the sequence of solutions
to the  (Tikhonov) regularized problems, denoted by $\{z_k^t\}$,
   converges to $x^*$, the least norm solution of VI$(X,
		   F(\bullet;\theta^*))$ (cf.~\cite[Ch.~12]{facchinei02finite}).
  It follows that $x_k\to x^*$ as $k\to \infty$.
\end{proof}

A natural question is whether there is indeed a feasible choice of
steplength sequences that satisfies the prescribed assumptions. In the
next Lemma, we show that there exists a feasible choice of stepsizes
that can satisfy requirements of Assumption ~\ref{assump:tikh stepsize}.
\begin{lemma} Let $\gamma_{f,k}=\frac{1}{(L_{F,x}+1)^2(k+1)^\alpha}$ and
$\epsilon_k=\frac{1}{(k+1)^\beta}$, where $0<\beta<\alpha<1$ and {$0<\alpha+\beta<1$}. Then, conditions of Assumption ~\ref{assump:tikh stepsize} are satisfied.
\end{lemma}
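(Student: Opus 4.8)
The plan is to verify conditions (a)--(d) of Assumption~\ref{assump:tikh stepsize} one at a time by substituting $\gamma_{f,k}=\frac{1}{(L_{F,x}+1)^2(k+1)^\alpha}$ and $\epsilon_k=\frac{1}{(k+1)^\beta}$ and then comparing polynomial exponents, relying throughout on the hypotheses $0<\beta<\alpha<1$ and $\alpha+\beta<1$. I would record at the outset the two facts used repeatedly: $\epsilon_k=(k+1)^{-\beta}\le 1$ is nonincreasing in $k$, so $L_{F,x}+\epsilon_k\le L_{F,x}+1$, and $\epsilon_k\le \epsilon_0$.

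First I would dispatch (a). Since $(L_{F,x}+\epsilon_k)^2\le (L_{F,x}+1)^2$, it suffices to show $\gamma_{f,k}\le \epsilon_k/(L_{F,x}+1)^2$; cancelling the common factor $(L_{F,x}+1)^{-2}$ reduces this to $(k+1)^{-\alpha}\le (k+1)^{-\beta}$, which holds because $\beta<\alpha$. The trailing inequality $\epsilon_k/(L_{F,x}+\epsilon_k)^2\le \epsilon_0/L_{F,x}^2$ is then automatic from $\epsilon_k\le\epsilon_0$ together with $(L_{F,x}+\epsilon_k)^2\ge L_{F,x}^2$. For (b) I would compute $\gamma_{f,k}\epsilon_k=\frac{1}{(L_{F,x}+1)^2(k+1)^{\alpha+\beta}}$, which is strictly below $1$ for every $k$, while $\sum_k (k+1)^{-(\alpha+\beta)}=\infty$ precisely because the exponent $\alpha+\beta$ is strictly less than $1$.

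The delicate step, and the one I expect to be the main obstacle, is (c). Applying the mean value theorem to $t\mapsto t^{-\beta}$ on $[k,k+1]$ gives $|\epsilon_{k-1}-\epsilon_k|=\beta\,\xi^{-\beta-1}\le \beta\,k^{-\beta-1}$ for some $\xi\in(k,k+1)$, whereas $\gamma_{f,k}\epsilon_k^2=\frac{(k+1)^{-(\alpha+2\beta)}}{(L_{F,x}+1)^2}$. Dividing the former by the latter and bounding $(k+1)^{\alpha+2\beta}\le (2k)^{\alpha+2\beta}$, the ratio is of order $k^{\alpha+\beta-1}$, which vanishes exactly because $\alpha+\beta<1$. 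The care here lies in the finite-difference estimate and in tracking the exponents so that the condition $\alpha+\beta<1$ surfaces as the governing requirement.

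Finally, for (d), choosing the learning steplength $\gamma_g<2/G_g$ forces $q_g=\sqrt{1-\gamma_g\eta_g(2-\gamma_g G_g)}\in(0,1)$, as already used in the strongly convex rate statements. Then $\frac{q_g^k}{\epsilon_k}=q_g^k(k+1)^\beta$ and $\frac{q_g^{k-1}}{\gamma_{f,k}\epsilon_k^2}=(L_{F,x}+1)^2\,q_g^{k-1}(k+1)^{\alpha+2\beta}$ are each the product of a geometrically decaying factor and a polynomially growing factor, so both tend to zero. Assembling (a)--(d) establishes the claim.
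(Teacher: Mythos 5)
Your proposal is correct and follows essentially the same route as the paper: verify (a), (b), (d) by direct exponent comparison using $\beta<\alpha$ and $\alpha+\beta<1$, and treat (c) as the delicate case governed by $\alpha+\beta<1$. The only cosmetic difference is in (c), where you estimate $|\epsilon_{k-1}-\epsilon_k|$ via the mean value theorem on $t\mapsto t^{-\beta}$ while the paper evaluates the same limit with L'H\^{o}pital's rule; both reduce to the ratio behaving like $k^{\alpha+\beta-1}\to 0$, so the arguments are interchangeable.
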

\begin{proof}
\noindent (a) {It can be seen by the choices of $\gamma_{f,k}$ and
	$\epsilon_k$ that
	$$ \gamma_{f,k} = \frac{1}{(L_{F,x}+1)^2 (k+1)^{\alpha}} \leq \frac{1}{(L_{F,x}+1)^2
		(k+1)^{\beta}} \leq
		\overbrace{\frac{1}{(L_{F,x}+\frac{1}{k+1}^{2\beta})(k+1)^{\beta}}}^{\triangleq
			\epsilon_k/(L_{F,x}+\epsilon_k^2)} \leq
			\frac{1}{L_{F,x}^2(k+1)^{\beta}}.$$
}
\noindent (b)
$\sum_{k=1}^\infty\gamma_{f,k}\epsilon_k=\sum_{k=1}^\infty
\frac{1}{(L_{F,x}+1)^2(k+1)^{\alpha+\beta}} \geq \sum_{k=1}^\infty
\frac{1}{k+1}=\infty.$\\
\noindent (c) If $t \triangleq (k+1)$, we may express the required limitas follows:
{\begin{align*}
&\lim_{k\to\infty}\frac{\epsilon_{k-1}-\epsilon_{k}}{\gamma_{f,k}\epsilon^2_{k}}=\lim_{k\to\infty} \frac{ {1\over k^\beta}-{1\over (k+1)^\beta}} {{1\over (k+1)^{\alpha+2\beta}}}=
\lim_{k\to\infty} \left({k+1\over k}\right)^\beta
\lim_{k\to\infty}\frac { (k+1)^\beta -
	k^\beta}{(k+1)^{-\alpha}}=\lim_{k\to\infty}\frac{1-\left({k\over
			k+1}\right)^\beta}{(k+1)^{-\alpha-\beta}} \\
& = \lim_{t\to\infty}\frac{1-\left(1 -
		\frac{1}{t}\right)^\beta}{t^{-\alpha-\beta}}.
\end{align*}}
Since this limit is of the form of $0/0$, we may use L'H\^{o}pital's
rule to express  the limit as follows:
{\begin{align*}
\lim_{t\to\infty}\frac{1-\left(1 -
		\frac{1}{t}\right)^\beta}{t^{-\alpha-\beta}} & =
\lim_{t\to\infty} \frac{ -\beta \left(1 -
		\frac{1}{t}\right)^{\beta-1}
\frac{1}{t^2}}{(-\alpha-\beta)t^{-\alpha-\beta-1}}  =
\lim_{t\to\infty} { -\beta \left(1 -
		\frac{1}{t}\right)^{\beta-1}
} \lim_{t\to\infty} \frac{1}{(-\alpha-\beta)t^{1-\alpha-\beta}} = 1
\times 0 = 0.
\end{align*}}
\noindent (d) {We have that
$\lim_{k\to\infty}\frac{q_g^{k-1}}{\gamma_{f,k}\epsilon^2_{k}}=\lim_{k\to\infty}
\frac{{q_g^{k-1}}}{{1\over (k+1)^{\alpha+2\beta}}}=0,$ since the
numerator converges to zero at a faster rate than the denominator.} In addition, {$\lim_{k\to
	\infty}\frac{q_g^{k}}{\epsilon_k}=\lim_{k\to\infty}
\frac{{q_g^{k}}}{{1\over (k+1)^{\beta}}}=0$ for the same reason.}
\end{proof}

\section{Numerical Results}\label{sec:Numerics}
In this section, we present some numerical results that support the
converence and rate analysis provided earlier. In Section~\ref{sec:41},
		   we describe the  economic dispatch problem which will form
		   the basis of our computational investigations. On the basis
		   of this problem, we consider the problem of misspecified
		   costs (Section~\ref{sec:42}) as well as misspecified demand
		   (Section~\ref{sec:43}).

\subsection{Economic dispatch problem}\label{sec:41}
 A traditional economic dispatch problem~\cite{kirschen2004a} requires scheduling of
generation to meet demand requirements in a least-cost fashion. The
schedule has to abide by a set of capacity and ramping constraints and
is given by  the following optimization problem:
 \begin{align}
 \min & \quad \sum_{t=1}^T\sum_{i=1}^N  c_i(g_{i,t}) \tag{EDisp}\\
 \st & \quad \sum_{i=1}^N g_{i,t}  \geq d_{t}, & \forall   t=1,\hdots,T\label{cosn:balance equ}\\
 & \quad 0   \leq g_{i,t}  \leq G_i   & \forall i,\,  t=1,\hdots,T\label{cons:max gen1}\\
 & \quad  g_{i,t}-g_{i,t-1} \leq {r}^{\rm up}_i \quad
   & \forall  i, \,  t=2,\hdots,T\label{cons:ramp rate1}\\
    & \quad   g_{i,t-1}-g_{i,t} \leq {r}^{\rm down}_i \quad
   & \forall  i, \,  t=2,\hdots,T,\label{cons:ramp rate2}
 \end{align}
{ where $N$ and $T$ are number of generators and time periods, respectively}. In addition, $g_{i,t}$ represents output power of generator $i$ at time $t$,
 and $c_i(\cdot)$ is the generation cost function of generator $i$,
 $d_t$ denotes load demand at period $t$, $G_i$ is the capacity of
 generator $i$, and ${r}^{\rm up}_i $ and ${r}^{\rm down}_i $ are the
 ramp-up and ramp-down limits of generator $i$, respectively. Note that
 ~\eqref{cosn:balance equ} is responsible for balancing generation with
 demand while ~\eqref{cons:max gen1} ensures that the power output of
 generators stay within the defined threshold. Constraints
 ~\eqref{cons:ramp rate1} and ~\eqref{cons:ramp rate2} are ramping rate
 bounds that simply ensure that any change in power output is
 within  a defined limit over consecutive periods.
\subsection{Misspecified cost functions}\label{sec:42}
In what follows, we consider a setting  where generation cost functions
are misspecified quadratic functions modeled as
$c_i(g;\theta_i)=\theta_{i1}g^2+ {\theta_{i2}} g$, where
${\theta_i}=(\theta_{i1},\theta_{i2})$ is unknown. Suppose that for
generator $i$, we have a prior collection of {$P$} samples denoted by
$(c_{ij},g_{ij})$, $j=1,\hdots,{P}$ defined as
$c_{ij}={\theta^*_{i1}}g_{ij}^2+ {\theta^*_{i2}} g_{ij}+\xi_j(\omega)\quad j=1,\hdots,P,$
 where $\xi$ is a random variable with mean zero. Then, the misspecified parameter {$\theta^*=(\theta^*_{i1}, \theta^*_{i2})_{i=1}^{i=N}$} is learnt by solving the
following least squares problem:
$$ \min_{{\theta\in \R^{2}\times\R^N}} h(\theta), \mbox{ where } h(\theta)
	\triangleq {1 \over
{NP}}\sum_{i=1}^N{\sum_{j=1}^P}(c_{ij}-(\theta_{i1}g_{ij}^2+ \theta_{i2} g_{ij}))^2.$$
\begin{table}[htbp]
\scriptsize
\begin{center}
\begin{tabular}{|c|c|c|c|c|c|c|c|c|c|c|}
  \hline
  $\#$ & Capacity    & $r^{\rm up}$ &  $r^{\rm down}$   \\
  \hline  \hline
  $1$                 & $40$    &$20$     &$20$          \\
  \hline
  $2$                 &$40$    &$20$     &$20$     \\
  \hline
  $3$                 & $35$   & $18$            & $18$       \\
  \hline
  $4$               & $50$     & $25$       & $25$         \\
  \hline
  $5$                   & $40$    &$20$      & $20$      \\
  \hline
\end{tabular}
\end{center}
\caption{Generator capacities and ramp limits}
\label{tab1}
\end{table}
In the first set of tests, we examine convergence of the constant and diminishing step
length schemes proposed in Section ~\ref{subsec:Str Opt Str learn}. We consider a set of
$5$ generators with misspecified generation cost function coefficients.
The goal is to schedule the power output over $5$ time periods. The
generators' specifications are shown in Table ~\ref{tab1}. For each
generator, a set of $1000$ samples is collected for constructing the
learning problem.
\begin{figure}[ht]
\centering
  \centering
 \includegraphics[scale=0.48]{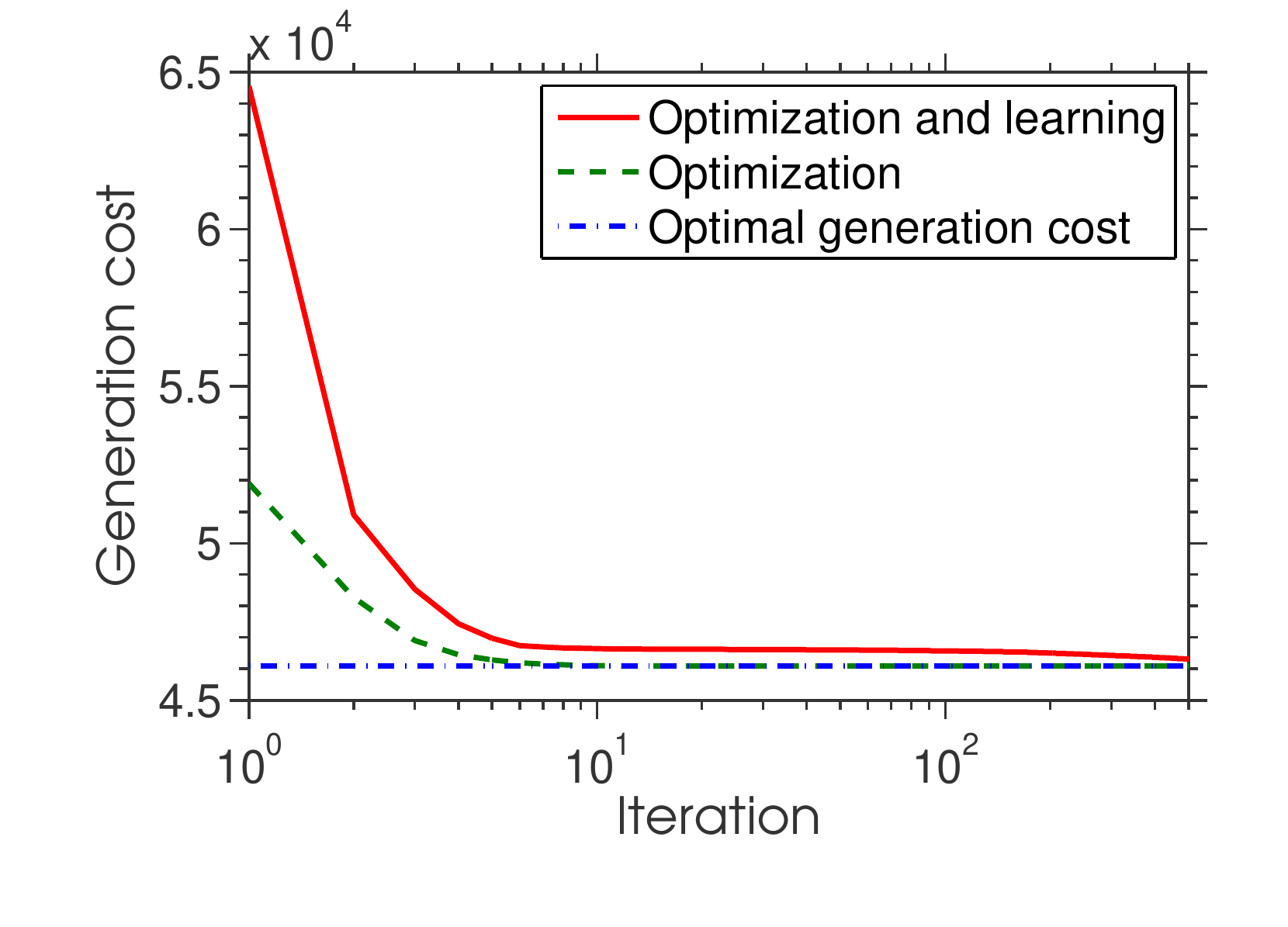}
  \centering
\includegraphics[scale=0.48]{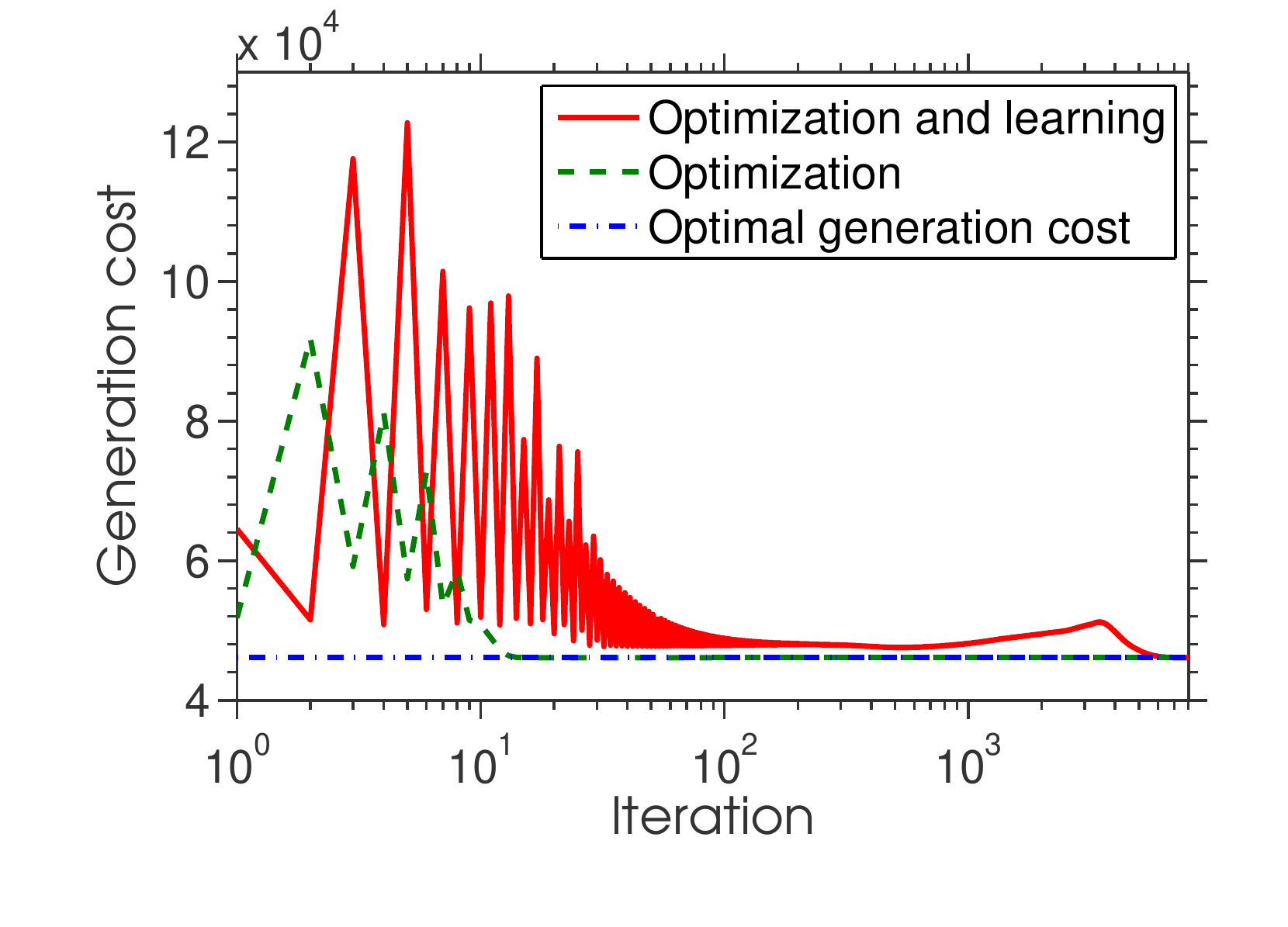}
\caption{{Strongly convex Opt. and
	learning:  Const. steplength (l) and Diminish. steplength
		(r)}}
\label{Fig:Conv_strconv_const_diminsh}
\end{figure}
Figure~\ref{Fig:Conv_strconv_const_diminsh} shows the behavior when
using a constant steplength scheme, with $\gamma_f=0.04$ and
$\gamma_g=0.003$. {Note that the Lipschitz constants for the gradient of
optimization and learning functions are $G_{f,x}=20$ and $G_g=250$,
			 respectively, while the strong convexity constant of the optimization problem is $\eta_f=20$}. Hence,
the prescribed stepsizes satisfy the required conditions. The scheme is also compared to the case when using the
optimal $\theta^*$ in the cost coefficient, requiring no learning.
Expectedly, we observe slower convergence when the cost function
coefficients are misspecified. {The figure on the right} displays the
trajectories when using diminishing step length scheme with
$\gamma_{f,k}=\gamma_{g,k}={1\over k}$. Figure ~\ref{Fig: Rate analysis,
	str opt str learn} plots the
convergence rate when using constant step size schemes and both the
optimization and learning problems are strongly convex.

\begin{figure}[H]
\centering
  \centering
\includegraphics[scale=0.48]{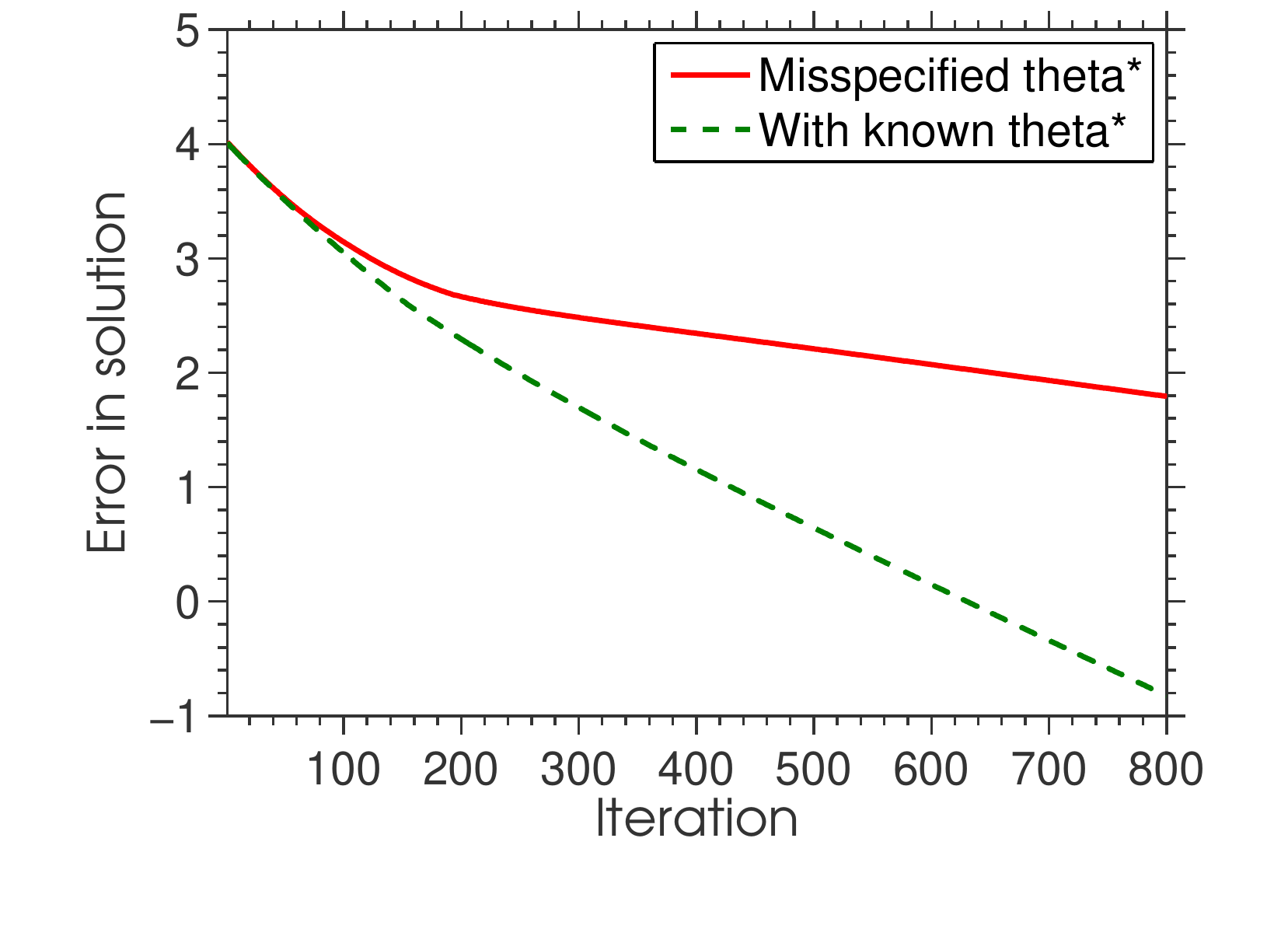}
  \centering
\includegraphics[scale=0.48]{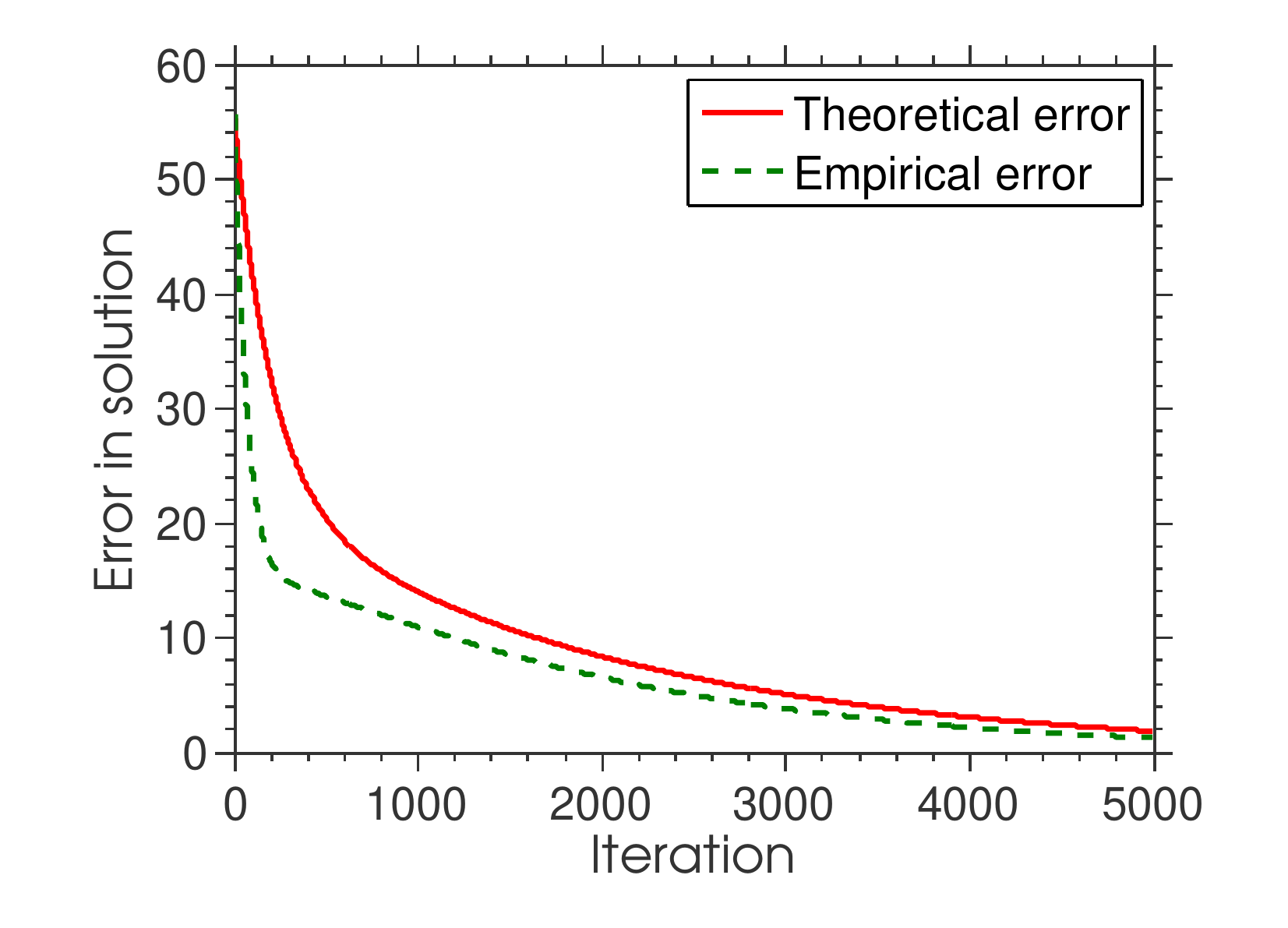}
\caption{{Strongly convex optimization: Impact on rate (l)
	and empirical vs. theor. rate (r)}}
\label{Fig: Rate analysis, str opt str learn}
\end{figure}
{Figure
~\ref{Fig: Rate analysis, str opt str learn} (l)} compares the error in
solution iterates of optimization problem for the cases of missipecified
and known $\theta^*$. As would be expected, when no learning is
involved, we observe a linear convergence rate as shown in  the dashed
line.  However, when learning is incorporated, the rate drops as shown
by solid line. {Figure ~\ref{Fig: Rate analysis, str opt str learn} (r)}
compares the actual error in solution iterates of misspecified
optimization problem to the theoretically predicted bound obtained in
Proposition ~\ref{Prop: con_rate_strconv} and supports the validity of
the bound.
\begin{table}[htbp]
\scriptsize
\begin{center}
\begin{tabular}{|c|c|c|c|c|c|c|}
  \hline
  \multirow{2}{*}{  No. of generators} &  \multicolumn{2}{c|} {Constant step size, $k=5000$}&
	  \multicolumn{2}{c|}{ Diminishing step size, $k=15000$}&  \multicolumn{2}{c|} {Averaging scheme,  $k=15000$} \\\cline{2-7}
   &$\|\theta_k-\theta^*\|$ &$\frac{\|f(g_k,\theta^*)-f^*\|}{1+f^*}$&$\|\theta_k-\theta^*\|$ &$\frac{\|f(g_k,\theta^*)-f^*\|}{1+f^*}$&$\|\theta_k-\theta^*\|$ &$\frac{\|f(g_k,\theta^*)-f^*\|}{1+f^*}$\\
  \hline  \hline
  5&$3.3$$e$-$3$& $9.4$$e$-$7$&$1.3$$e$-$3$&$5.7$$e$-$4$&$6.9$$e$-$7$&$3.4$$e$-$4$\\\hline
10&$1.2$$e$-$2$ &$2.7$$e$-$6$ &$2.7$$e$-$3$ &$6.0$$e$-$4$&$1.1$$e$-$6$ &$5.8$$e$-$4$ \\\hline
15&$1.2$$e$-$1$& $5.4$$e$-$5$&$1.2$$e$-$3$&$5.9$$e$-$4$&$2.1$$e$-$6$&$5.6$$e$-$4$\\\hline
20&$9.0$$e$-$1$&$3.0$$e$-$3$&$4.3$$e$-$2$&$1.2$$e$-$3$&$5.0$$e$-$6$&$6.6$$e$-$4$ \\\hline
\end{tabular}
\end{center}
\caption{Constant and  diminishing stepsize and averaging schemes}
        \label{table: Conv_Opt_Schemes}
\vspace{-0.1in}
\end{table}

{In Table~\ref{table: Conv_Opt_Schemes}, we examine the performance of
the various schemes as the
problem size grows. The implemented schemes are the constant step size scheme proposed in Proposition~\ref{prop_conv_csl} , diminishing step size scheme proposed in Proposition~\ref{Prop:dimin_step} and averaging scheme stated in proposition~\ref{theo_conv_convex}}. We compare the error in both the solution to the
learning problem and the error in the function value associated with the
optimization problem after a prescribed set of iterations. While
constant steplength schemes perform well, the performance appears to be
more affected by problem size in comparison with diminishing steplength
or averaging schemes. This can be traced to the observation that as
problem size grows, the Lipschitz constant of gradient of learn function
increases as well and the employed step sizes for constant step size
scheme are adjusted accordingly.

Figure ~\ref{Fig:Rate convex averaging} displays the performance  when
using the averaging schemes proposed in Proposition
~\ref{theo_conv_convex}. With known $\theta^*$, the rate of convergence
in function values is of the order of $1/K$ where $K$ is number of
steps.  In Figure ~\ref{Fig:Rate convex averaging} (l), the error in function values is shown as a dashed line when $\theta^*$ is known
and this rate drops by a constant factor when learning is involved as
shown by the solid line. Figure ~\ref{Fig:Rate convex averaging} (r)
compares the theoretical bound in Proposition ~\ref{theo_conv_convex}
with the empirical {error}. As
it is confirmed in this figure, the theoretically predicted rate represents an
upper bound to the actual convergence rate of averaging scheme. Table
~\ref{table: Conv_Opt_Schemes} displays the errors obtained from running
the averaging scheme for $15000$ iterations with increasing number of
generators.

\begin{figure}[H]
\centering
  \centering
\includegraphics[scale=0.48]{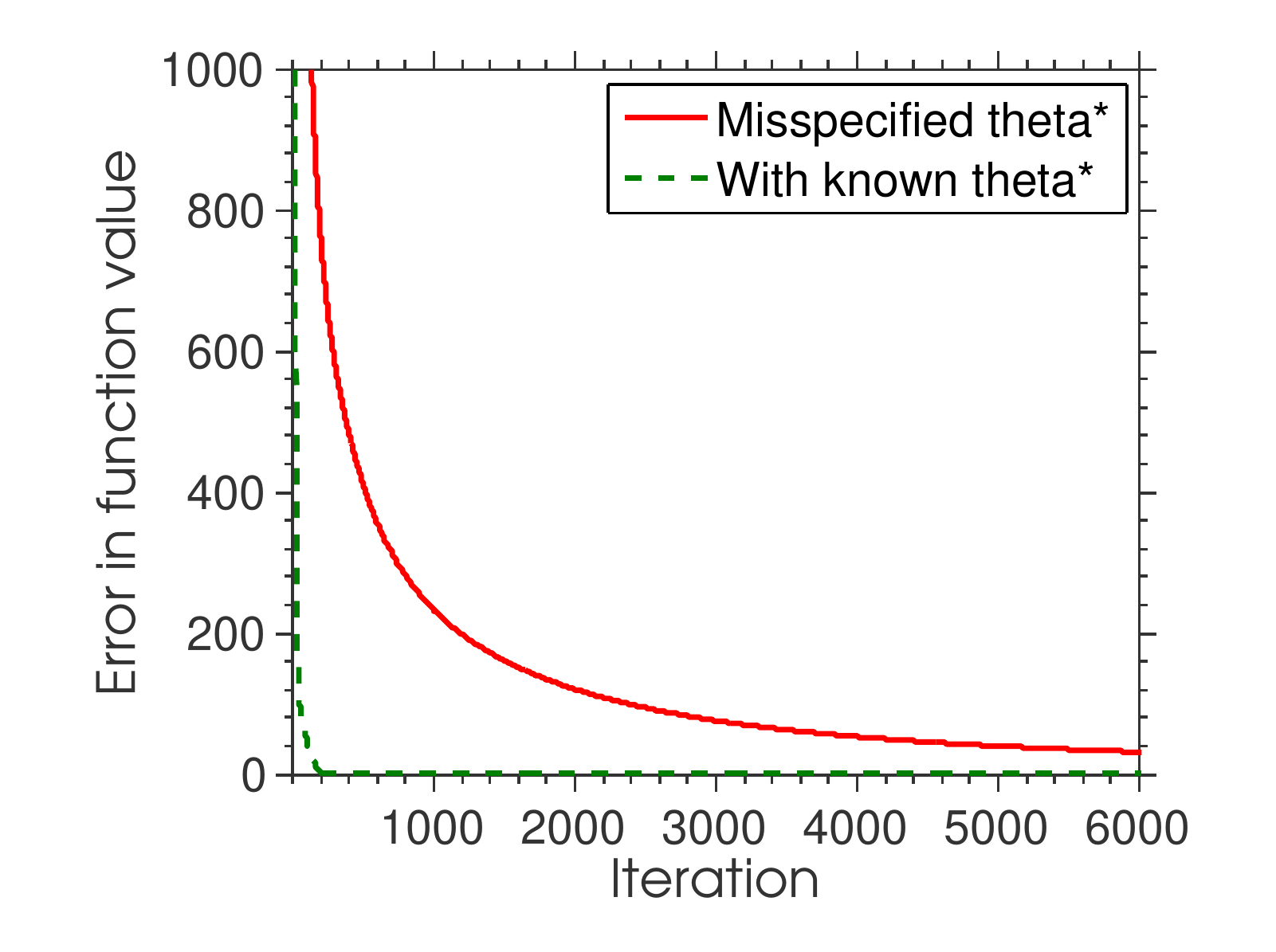}
  \centering
\includegraphics[scale=0.48]{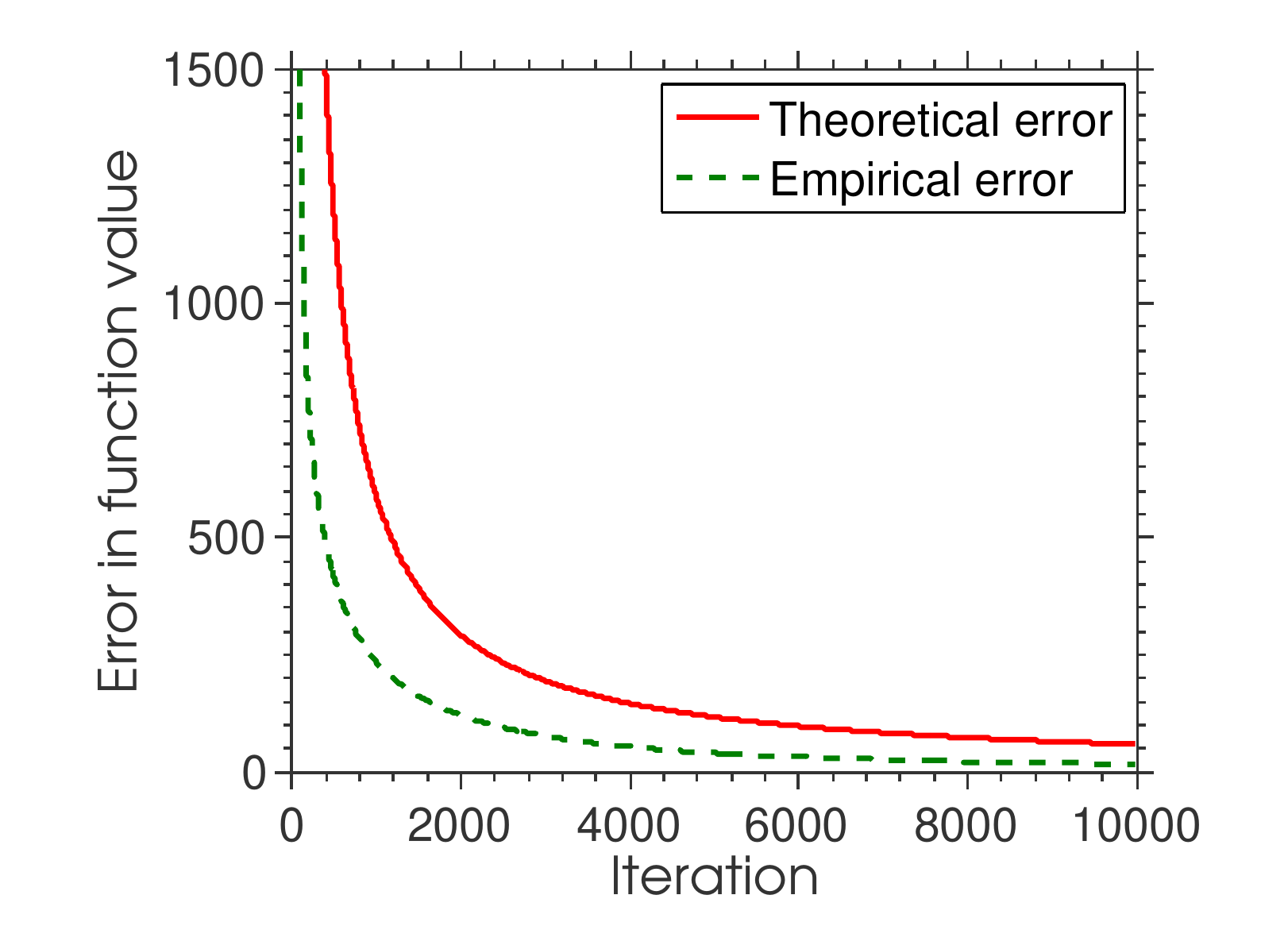}
\caption{Convex optimization: Impact on rate
	(l) and empirical vs. theor. (r)}
\label{Fig:Rate convex averaging}
\vspace{-0.2in}
\end{figure}

\begin{figure}[H]
\centering
\includegraphics[scale=0.54]{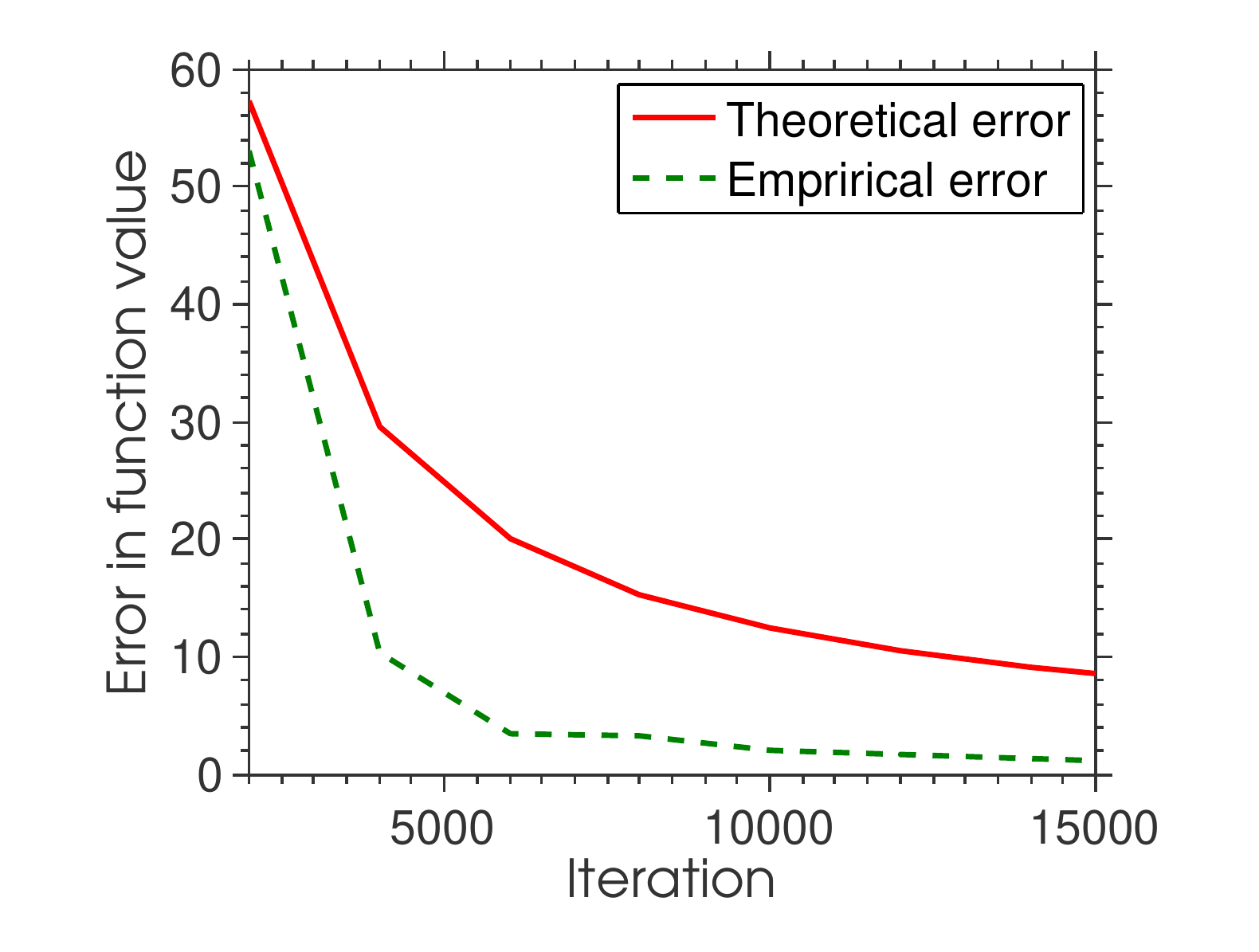}
\caption{Nonsmooth convex optimization: empirical error vs. theor. bound}
\label{Fig:Rate_sub_grad_av}
\end{figure}
{To test the joint subgradient scheme(Algorithm~\ref{alg2}), we consider a nonsmooth generation cost function that is the maximum of $3$ linear functions and is defined as below:
$$c_i(g;\theta_i)=\max\Big(\theta_{i1}g+\theta_{i2},\theta_{i3}g+\theta_{i4},\theta_{i5}g+\theta_{i6}\Big)\quad i=1,\dots,N$$
Figure ~\ref{Fig:Rate_sub_grad_av} displays the result using the optimal
constant step length scheme proposed in part (ii) of Proposition
~\ref{prop:sub_aver}. Given a terminal iteration index $K$, the optimal
step length is first calculated using ~\eqref{opt_step_rule} and then the scheme is terminated after $K$ number of iterations. Figure ~\ref{Fig:Rate_sub_grad_av} compares the resulted empirical error in function value of averaged point versus the theoretical bound. As shown in the figure, the empirical error is within the theoretical bound.}
\subsection{Misspecified demand}\label{sec:43}
Suppose that demand vector $d\triangleq(d_t:t=1,\hdots,T)$ is
misspecified and may be learnt through a parallel learning process.
We refer to the misspecifed problem as (EDisp$(d)$) where $d$ denotes
the misspecified demand. Suppose the linear inequality constraints of
(EDisp$(d)$) are given by
\begin{align*}
   h(g) \triangleq \pmat{
   \pmat{\sum_{i=1}^N g_{i,t} -d^*_t}_{t=1}^T\\
    \pmat{G_i -g_{i,t}}^T_{i,t=1}\\
	\pmat{{r}^{\rm up}_i  -g_{i,t}+g_{i,t-1}}_{i,t=2}^T\\
   \pmat{{r}^{\rm down}_i -g_{i,t-1}+g_{i,t}}_{i,t=2}^T },
 \end{align*}
where $g\triangleq(g_{i,t}:i=1,\hdots,N$, $t=1,\hdots,T)$,
	  {and the cost function is given by $c(g)\triangleq\sum_{t=1}^T\sum_{i=1}^N  c_i(g_{i,t}) $.} The first
	  order conditions of this problem are necessary and sufficient and
	  are given by
\begin{align*}
0\leq z \perp F(z;d^*)\geq 0 ,\label{formulation: complementary} \mbox{
	where }
  z\triangleq \pmat{g \\ \lambda},   F(z) \triangleq  \pmat{\nabla_g c(g;d^*)-\nabla_g
	   h(g;d^*)^T\lambda \\
		   h(g;d^*)},
 \end{align*}
and $\lambda$ is a vector of dual variables corresponds to the
constraints set $h(g) \geq 0$. The above conditions can be compactly
stated as VI$(Z,F(\bullet;d^*))$~\cite{Pang03I} allowing us to consider
the use of the regularized and extragradient schemes developed in
Section ~\ref{sec:monotone VI} for the solution of misspecified
variational inequality problems. We consider a set of $5$ generators
with known quadratic cost functions  while the demand vector
$d^*=(d^*_t:t=1,\hdots,T)$ is unknown. A set of $1000$ samples is randomly
generated and the optimal demand is the solution to the following
learning problem:
 \begin{align*}
 \min_{d\in \R_{+}^T} L(d)\quad \mbox{where} \quad L(d)\triangleq \sum_{i=1}^{1000} \|d-y_i\|^2,
\end{align*}
 and $y_i$, $i=1,\hdots,1000$ denote the set of samples. Since the
 variational problem is merely monotone, the solution set is
 multi-valued. In such settings, we use the gap
 function~\cite{facchinei02finite} as a metric of progress, {which is analogous to the objective function in optimization}. {Given VI$(Z,F)$, associated gap function is defined as} follows:
      $$ G(y) \triangleq \begin{cases} F(y)^Ty, & F(y) \in {Z}^{\circ} \\
		  							  +\infty, & F(y) \not \in Z^{\circ},
						\end{cases} $$
where ${Z}^{\circ} \triangleq \left\{z: z^Ty \geq 0, y \in Z\right\}.$
 Recall that $x$ solves VI$(Z,F)$ if and only if $G(x) = 0$. To allow for representing the gap function when $F(y) \not \in
{Z}^{\circ}$, we use a modified gap function given by $G(y) = F(y)^Ty$,
	which could be negative. 
Figure ~\ref{Fig:Con_VI} (l) compares the trajectory of gap function
value with learning (solid line) with the trajectory observed when $d^*$
	is available. Note that in this problem, $T=2$ and  $\gamma_f=
	{k^{-0.65}}$ and $\epsilon_k={k^{-0.34}}$. In addition, we employ a
constant step size of $\gamma_g=.003$ for the learning problem, given
	that the Lipschitz constant of $\nabla_d L(d)$ is estimated to be
	$520$. Expectedly, learning leads to a degradation in the
	convergence rate as compared with using  the true demand $d^*$.  In
	Figure ~\ref{Fig:Con_VI} (r), we examine the behavior of the
	misspecified extragradient scheme where	$T=5$ and $L_{F,x}=2.8$, $L_{F,\theta}=1$
	and $G_g=2$, respectively. Hence, the step sizes are fixed at
	$\tau=0.01$ and $\gamma_g=0.9$.  Finally, in Table~\ref{table:
		Conv_VI_schemes}, we examine the error when the number of
		generators increases. We terminate the regularized and
		extragradient scheme after $10000$ and $150000$ iterations and
		we present the error in solution iterates of learning function
		as well as the gap function associated with the true problem.
		Since the extragradient scheme is a constant steplength scheme,
		its performance appears to be significantly better than the
		regularized scheme but the latter does not necessitate knowledge
		of system parameters.
\begin{figure}[h]
\centering
  \centering
\includegraphics[scale=.38]{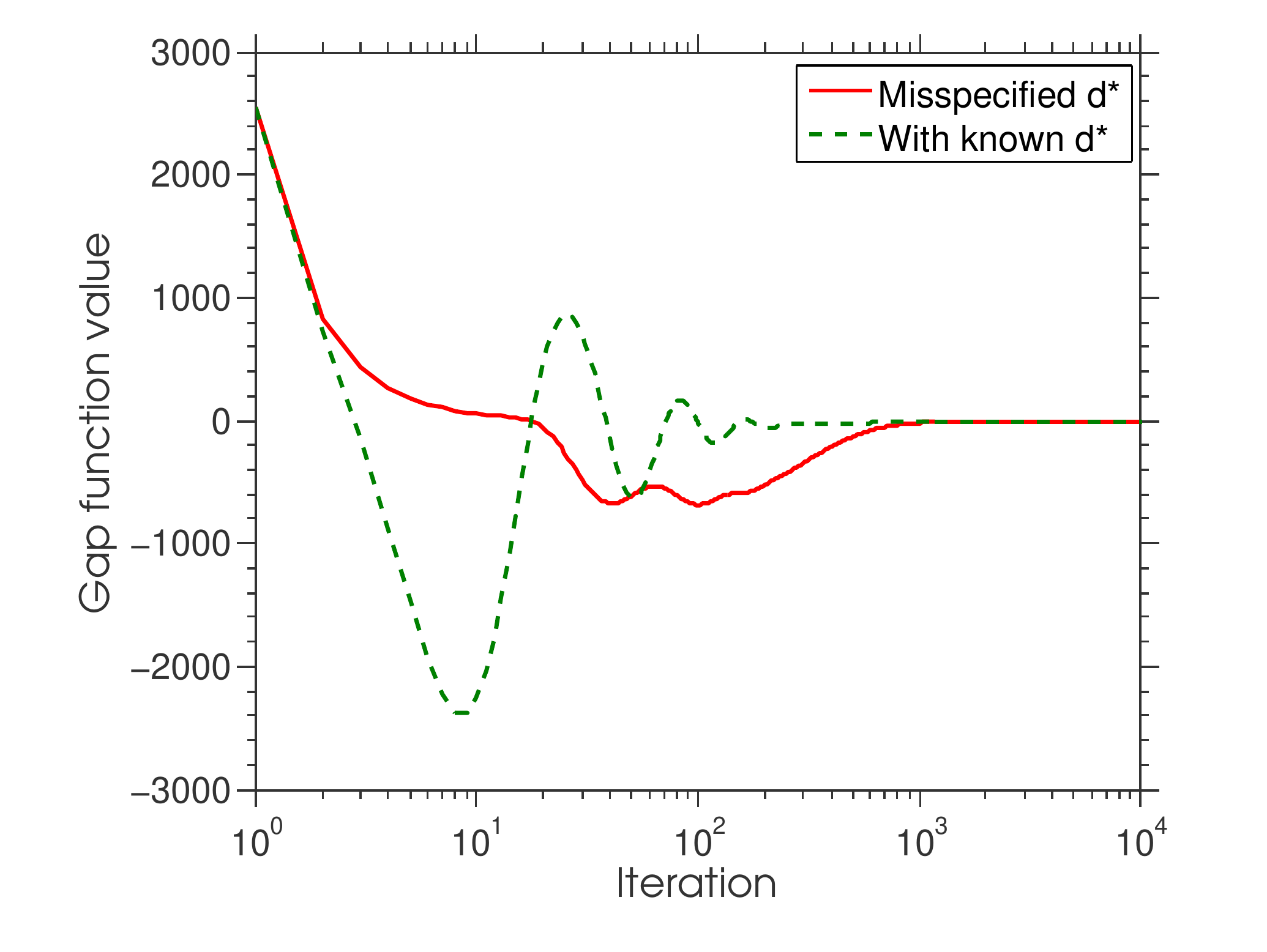}
  \centering
\includegraphics[scale=.4]{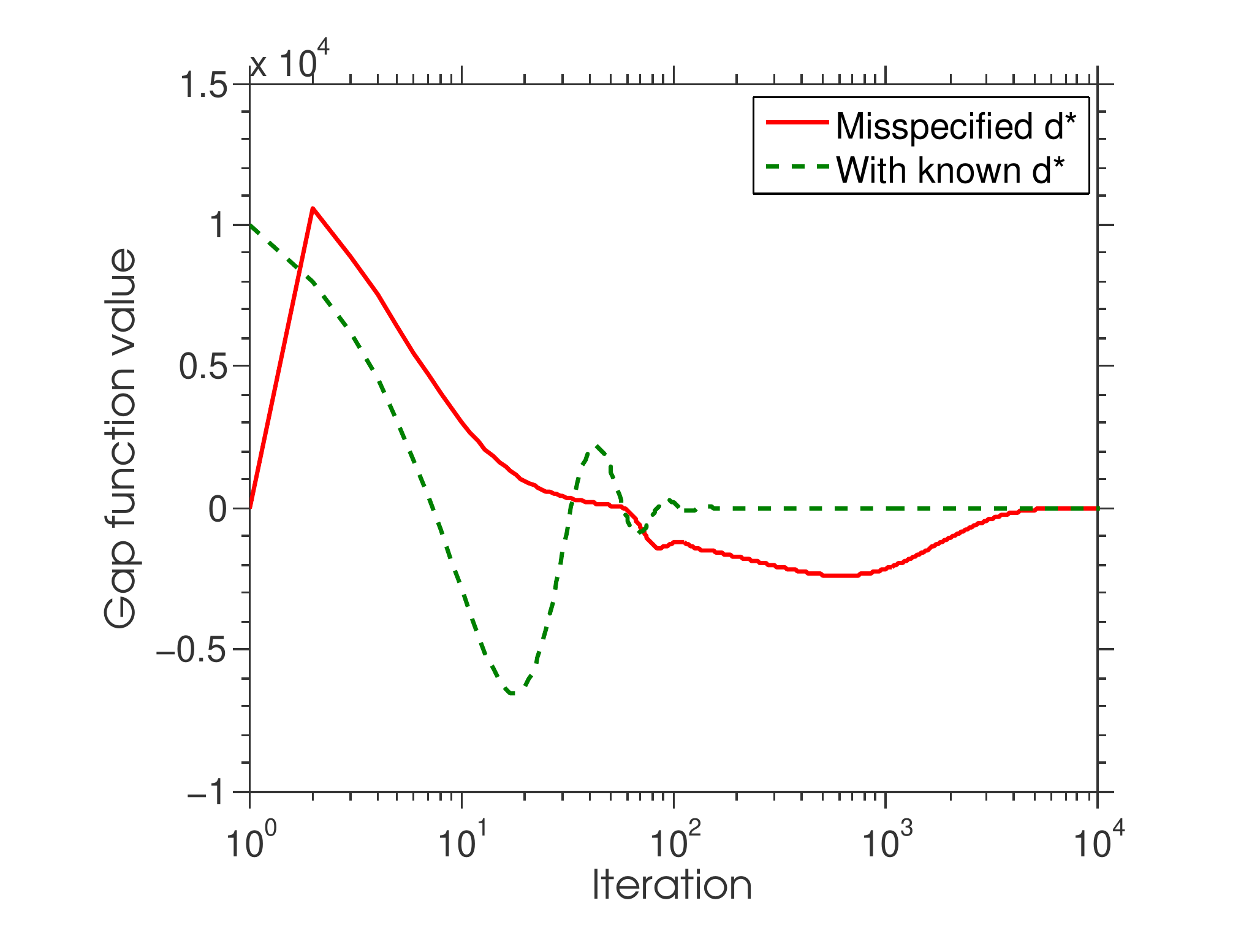}
\caption{Monotone VIs: Regularized schemes
	(l) and Extragradient schemes (r)}
\label{Fig:Con_VI}
\vspace{-0.1in}
\end{figure}

\begin{table}[htbp]
\scriptsize
\begin{center}
\begin{tabular}{|c|c|c|c|c|c|c|}
  \hline
  \multirow{2}{*}{  No. of generators} &  \multicolumn{2}{c|} {Extragradient scheme, $k=10000$}&
	  \multicolumn{2}{c|}{Regularization scheme, $k=150000$} \\\cline{2-5}
   &$\|d_k-d^*\|$ &G$(g_k;d^*)$&$\|d-d^*\|$ &G$(g_k;d^*)$\\
  \hline  \hline
  5& $4.5$$e$-$6$&$2.10$$e$-$5$&$2.7$$e$-$4$&$1.2$$e$-$3$\\\hline
10&$9.8$$e$-$6$&$4.36$$e$-$5$ &$3.8$$e$-$4$&$2.3$$e$-$3$\\\hline
15& $1.3$$e$-$5$&$6.5$$e$-$5$&$4.6$$e$-$4$&$3.4$$e$-$3$\\\hline
20&$1.8$$e$-$5$&$8.6$$e$-$5$&$5.3$$e$-$4$&$4.5$$e$-$3$\\\hline
\end{tabular}
\end{center}
 \caption{Convergence of extragradient and regularization schemes}
        \label{table: Conv_VI_schemes}
		\vspace{-0.2in}
\end{table}
\section{Concluding remarks} \label{sec:5}
The field of optimization algorithms has predominantly focused on the
resolution of optimization problems when the objective function and the
constraint set are known with certainty. However, in settings
complicated by large networked systems with streaming data, the
resulting optimization problems are often corrupted by a
misspecification, either in terms of the model or a prescribed
parameter. We focus on the second case and examine how one may resolve
this misspecification through a suitably defined learning process.  More
precisely, we formalize the setting as one where we have two coupled
computational problems; of these, the first is a misspecified
optimization problem while the second is a learning problem that arises
from having access to a learning data set, collected a priori. One
avenue for contending with such a problem is through an inherently
sequential approach that solves the learning problem and utilizes this
solution in subsequently solving the computational problem.
Unfortunately, unless accurate solutions of the learning problem are
available in finite time, it appears that sequential
approaches may not prove advisable. 

In this paper, we consider a simultaneous
approach that combines learning and computation via gradient-based
techniques. We make several contributions in this regard, broadly
categorized within the realm of misspecified convex optimization and
monotone variational inequality problems: (i) {\em Convex optimization
	problems:} First, in strongly convex regimes, it can be readily shown that constant
steplength gradient schemes admit global convergence properties. In
regimes where the strong convexity constants are unavailable, we
prove that suitably defined diminishing steplength schemes are also
shown to be convergent. Furthermore, we provide rate statements that demonstrate a
degradation the linear convergence rate, a consequence of
incorporating learning. Next, we consider problems where the
computational problem is merely convex and observe that both constant
steplength gradient
and subgradient methods see no change in the overall convergence rate
but instead display  a similar modification in their
rates given by $\|\theta_0-\theta^*\|{\cal O}(q_g^K +
		1/K)$.  This term is scaled by the initial misspecification in
$\theta$ and comprises of two terms, the first being a term that emerges
from learning the true $\theta^*$ and decays
to zero at a geometric rate while the second is an interaction term that
takes its rate from the averaging structure. When both the computation
and the learning problems are assumed to be merely convex with an
additional weak sharpness assumption on the learning problem, both
constant steplength and diminshing steplength statements may be
provided; {(ii) \em Variational inequality problems:} In the context of
monotone variational inequality problems, we present two sets of
techniques. Of these, the first is a constant steplength extragradient
scheme in which the steplength bound is modified to incorporate the
initial misspecification, given by $\|\theta_0-\theta^*\|$. Our second
scheme develops an iterative (Tikhonov) regularized scheme that does
rely on problem parameters and allows for recovery of the least norm
solution of the misspecified variational inequality problem.  Finally,
preliminary numerical tests support the theoretical findings and
remarkably the empirical convergence rates show a significant
superiority to theoretical bounds, suggesting that improvements may be
available.

Yet much remains to be understood about the realm of such techniques,
For instance, to what extent does the introduction of learning affect
the convergence rate in gradient methods as arising from Nesterov-type
acceleration techniques? Furthermore, can be develop analogous rate
statements for proximal and Lagrangian schemes and quantify the impacts
from learning? Finally, can we extend this framework to other
computational problems such as in the solution of Markov decision-making
problems (MDPs)?
\bibliography{ref}
\bibliographystyle{IEEEtran}
\end{document}